\newcounter{defcounter}
\providecommand\@enum@widestlabel{7}
\newtheorem{lemma}{Lemma}[section]
\newtheorem{theorem}[lemma]{Theorem}
\newtheorem{corollary}[lemma]{Corollary}
\newtheorem{proposition}[lemma]{Proposition}
\newtheorem{conjecture}[lemma]{Conjecture}
\theoremstyle{definition}
\newtheorem{remark}[lemma]{Remark}
\newtheorem{example}[lemma]{Example}
\renewcommand{\theequation}%
{\arabic{section}.\arabic{lemma}.\arabic{equation}}
\newcommand{\CC}{\ensuremath{\mathbb{C}}} 
\newcommand{\NN}{\ensuremath{\mathbb{N}}} 
\newcommand{\PP}{\ensuremath{\mathbb{P}}} 
\newcommand{\QQ}{\ensuremath{\mathbb{Q}}} 
\newcommand{\RR}{\ensuremath{\mathbb{R}}} 
\newcommand{\ZZ}{\ensuremath{\mathbb{Z}}} 
\newcommand{\sI}{\ensuremath{\kern -1pt \mathscr{I}\kern -2pt}} 
\newcommand{\sJ}{\ensuremath{\kern -2pt \mathscr{J}\kern -2pt}} 
\newcommand{\sO}{\ensuremath{\mathscr{O}}}
\newcommand{\m}{\ensuremath{\mathfrak{m}}}
\renewcommand{\geq}{\geqslant}
\renewcommand{\leq}{\leqslant}
\DeclareMathOperator{\mult}{mult}
\DeclareMathOperator{\Sym}{Sym}
\newcommand{\equ}{\ensuremath{\,=\,}}
\newcommand{\deq}{\ensuremath{\stackrel{\textrm{def}}{=}}}
\newcommand{\Bplus}{\ensuremath{\textbf{\textup{B}}_{+} }}
\newcommand{\Bminus}{\ensuremath{\textbf{\textup{B}}_{-} }}
\newcommand{\Bstable}{\ensuremath{\textbf{\textup{B}} }}
\newcommand{\asy}[2]{\ensuremath{\sJ({#1},\| {#2} \|)}}
\begin{document}

\title{Singular divisors and syzygies of polarized abelian threefolds}

\author{Victor Lozovanu}

\address{Victor Lozovanu -- Current Address: Universit\'a degli Studi di Genova, 
\newline  
\hspace*{2.74in} Dipartimento di Matematica, 
\newline  
\hspace*{2.74in} Via Dodecaneso 35, 16146, Genova, Italy.\newline
\hspace*{2.74in} \textit{Email address}: \href{lozovanu@dima.unige.it}{\nolinkurl{lozovanu@dima.unige.it}}}

\address{\hspace*{1.9in} Address: Leibniz Universit\"{a}t Hannover,  \newline
 \hspace*{2.74in} Institut f\"{u}r Algebraische Geometrie, 
 \newline
 \hspace*{2.74in}
 Welfengarten 1, 30167, Hannover, Germany.
 \newline
\hspace*{2.74in} \textit{Email address}: \href{lozovanu@math.uni-hannover.de}{\nolinkurl{lozovanu@math.uni-hannover.de}}}

\maketitle

\begin{abstract}
We provide numerical conditions for a polarized abelian threefold $(A,L)$ to have simple syzygies, in terms of property $(N_p)$ and the vanishing of Koszul cohomology groups $K_{p,1}$. We rely on a reduction method of Lazarsfeld-Pareschi-Popa \cite{LPP11}, convex geometry of Newton--Okounkov bodies, inversion of adjunction techniques from work on Fujita's conjecture, and the use of differentiation by Ein-Lazarsfeld-Nakamaye \cite{ELN94}. As a by-product, we construct effective divisors in any ample rational class of high self-intersection, whose singularities are all concentrated on an abelian subvariety. This can be seen as the dual picture considered by Ein-Lazarsfeld \cite{EL97} for theta divisors.
\end{abstract}

\section*{Introduction}

The deep connection between the geometric data and the defining equations (syzygies) of a subvariety of  projective space has been known since classical times. But, aside from sporadic efforts, it was not until the work of M. Green \cite{G84} that a coherent general picture started to emerge. 

Green reinterpreted syzygies in terms of Koszul cohomology groups  $K_{p,q}(X,L, B)$ ($p,q\in\NN$), where $X$ is a projective variety, $L$ a very ample line bundle and $B$ an arbitrary line bundle on $X$. Out of Green's work two syzygetic properties have grown in prominence due to their connection to the geometry of $(X,L,B)$. First, property $(N_p)$ of Green--Lazarsfeld \cite{GL86} asks the ring $R(X,L)=\oplus_{m\in\NN} H^0(X, L^{\otimes m})$ to have linear syzygies up to step $p$ as a module over $\Sym^* H^0(X,L)$. Second one focuses on the vanishing of $K_{p,1}(X,L;dL)$ for  $d\gg 0$ (a nice overview is the recent survey of Ein--Lazarzfeld \cite{EL18} or the standard reference \cite{PAG}*{Section~1.8}). 

Both of these properties are well understood for curves. For property $(N_p)$ some milestones are the results of Green \cite{G84}, Green--Lazarsfeld \cite{GL86}, Farkas-Kemeny \cite{FK16}. For the vanishing of $K_{p,1}$ the main results are those  of Voisin \cite{V02}, and more recently  Ein--Lazarsfeld \cite{EL15}, and Aprodu-Farkas-Papadima-Raicu-Weyman \cite{AFPRW19}. On curves these two properties are closely related and are translated into very geometric properties that are in essence of Riemann-Roch type.

In higher dimensions the picture is murky. It's not clear what are the geometric properties that might have an impact on the syzygy side. For vanishing of Koszul groups $K_{p,1}(X,L,dL)$ recent work of Ein--Lazarsfeld-Yang \cite{ELY16} and Agostini \cite{A17} suggest some conjectural picture. 

For property $(N_p)$ abelian varieties have proven to be a successful case study.
Extrapolating elliptic curves, Lazarsfeld conjectured that $L^{\otimes (p+3)}$ satisfies property $(N_p)$ for an ample line bundle $L$ on a complex abelian variety $A$. This was proved by Pareschi in \cite{P00} (see also \cite{PP03}). It is worth pointing out that $L^{\otimes 2}$ is base point free, i.e.  $p=-1$. So, property $(N_p)$ ($p\geq 0$) can be perceived syzygetically and positivity-wise, as a natural generalization of global generatedness ($N_{-1}$).  

One drawback is that Preschi's result feels incomplete. It doesn't deal with the actual line bundle $L$ and doesn't take into account the numerical data of $(A,L)$. More importantly, it doesn't connect the geometric data to the syzygetic one, by showcasing those subvarieties with the strongest impact on the syzygies of the ambient space. To amend this we propose the following conjecture:
\begin{conjecture}\label{conj:main1}
	Let $(A,L)$ be a $g$-dimensional complex polarized abelian manifold and $p\geq -1$ an integer. Suppose that for any abelian submanifold $A'\subseteq A$ we have
	\[
	\Big(A'\cdot L^{\textup{dim}(A')}\Big)  \ > \ \Big((p+2)\cdot \textup{dim}(A')\Big)^{\textup{dim}(A')} \ . 
	\]
Then $(A,L)$ satisfies property $(N_p)$. Moreover, if $p\geq 0$, then $K_{p,1}(X,L;dL)= 0$ for any $d\geq 2$.
\end{conjecture}

Originally Ito \cite{I18} proposed this conjecture for property $(N_p)$ ($p\geq 0$). We include here base-point freeness as property $(N_{-1})$. And, as for curves, we expect the vanishing of $K_{p,1}$ on abelian manifolds to behave similarly as property $(N_p)$.

In dimension two Conjecture \ref{conj:main1} for property $(N_p)$ was proven by Ito \cite{I18}, by streamlining the argument of a similar statement in \cite{KL18}, by K\"uronya and the author. Moreover, using the work of Aprodu--Farkas \cite{AF11} and Voisin \cite{V02}  on Green's conjecture for curves, Agostini, K\"uronya and the author \cite{AKL19} obtain similar results for K3 and Enriques surfaces. Interestingly, a unified proof for all surfaces with numerically trivial canonical bundle is still missing. 

The main goal of this article is to give credence to Conjecture \ref{conj:main1} in dimension three. More precisely, we show the following theorem:
\begin{theorem}\label{thm:main}
Let $(A,L)$ be a complex polarized abelian threefold such that $(L^3)>59\cdot (p+2)^3$ for some integer $p\geq 0$. Assume  the following conditions:
\begin{enumerate}
\item $(L^2\cdot S)> 4\cdot (p+2)^2$ for any abelian surface $S\subseteq A$. 
\item $(L\cdot C) \ >  \ 2\cdot (p+2)$ for any elliptic curve $C\subseteq A$.
\end{enumerate}
Then the pair $(A,L)$ satisfies property $(N_p)$ and $K_{p,1}(A,L;dL)=0$ for any $d\geq 2$. Moreover, Conjecture \ref{conj:main1} holds for $p=-1$, i.e. for globally-generatedness.
\end{theorem}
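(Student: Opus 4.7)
The plan is to apply the reduction method of \cite{LPP11}, which translates property $(N_p)$ (and, in the appropriate variant, the vanishing of $K_{p,1}(X,L;dL)$) into the problem of producing, for any set of $p+2$ general points, an effective $\QQ$-divisor numerically equivalent to $(1-\varepsilon)L$ whose multiplier ideal cosupports exactly those points and is trivial elsewhere in a neighborhood. Since $X$ is abelian, homogeneity allows us to translate all $p+2$ points to a single point, say the origin $0\in X$, and reformulate the task as constructing an effective $\QQ$-divisor $D\equiv_{\mathrm{num}}\lambda L$ with $\lambda<1$ such that $(X,D)$ is not klt at $0$ while $\{0\}$ is isolated in the non-klt locus.

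Next, I would work on the blowup $\pi\colon\tX\to X$ at $0$ with exceptional divisor $E\simeq\PP^2$ and compute the infinitesimal Newton--Okounkov body $\nob{Y_\bullet}{\pi^*L}$ associated with an admissible flag $Y_\bullet = (\tX\supset E\supset \ell\supset \{pt\})$, following \cite{KL14,KL17}. The existence of the required singular divisor at $0$ reduces, as in \cite{KL15}, to an explicit containment: a standard simplex of side length roughly $p+2$ should sit inside $\nob{Y_\bullet}{\pi^*L}$. Using the volume identity $\vol_X(L) = (L^3) = 6\,\vol\bigl(\nob{Y_\bullet}{\pi^*L}\bigr)$ together with the hypothesis $(L^3)>59(p+2)^3$, one gets enough room volumetrically; the real issue is to ensure that the Newton--Okounkov body is not too thin in some direction, i.e.\ that the simplex actually fits.

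To handle potential thinness, I would pass to asymptotic invariants of linear series in the sense of \cite{ELMNP06} and identify the obstruction with a positive-dimensional log canonical centre $V\subset X$ of the pair $(X,D)$ at $0$, extracted via inversion of adjunction techniques (in the spirit of \cite{AS95,Ka97,H97}). By translation arguments on the abelian threefold, one expects $V$ to be (a translate of) an abelian subvariety passing through $0$, so the dichotomy is: either $V$ is an abelian surface $S$ or $V$ is an elliptic curve $C$. In each case the plan is to restrict the divisor construction to $V$, apply the differentiation-of-sections idea of \cite{ELN94} to improve the singularity at $0$ by differentiating along transverse directions, and then use hypothesis (1), respectively (2), to contradict the very existence of such a $V$: the restricted intersection numbers $(L^2\cdot S)$ and $(L\cdot C)$ are bounded below precisely so that the numerical invariants forced on $V$ by the LCC produce a violation.

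The main obstacle I expect is exactly this last step: producing a clean dichotomy of LCCs and ruling each one out. In dimension two (the case of \cite{KL15}) the only positive-dimensional LCC possible is a curve, so the analysis is purely one-dimensional; in our threefold setting one must simultaneously control both surface and curve LCCs, and the differentiation argument has to be iterated carefully (first isolate $V$ as a surface, then possibly cut it down to a curve inside $V$, then to a point) while tracking the loss of coefficient in $L$ at each step. The constants $59$ and $4$, $2$ in the hypotheses are, I expect, exactly what one needs to balance these two levels of ``dimensional reduction'' together with the Fujita-type base-point freeness estimates used to initiate the construction. Once the construction succeeds for property $(N_p)$, the same machinery, combined with the criterion of \cite{LPP11} for the vanishing of $K_{p,1}(X,L;dL)$, yields the Koszul vanishing statement with the same numerical hypotheses for all $d\geq 2$.
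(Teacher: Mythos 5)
Your outline captures the correct starting point (the reduction via \cite{LPP11} and Lemma~\ref{lem:ito} to producing $D\equiv_{\mathrm{num}}\lambda B$ with $\lambda<1$ and zero-dimensional LC locus at the origin, followed by an infinitesimal Newton--Okounkov body argument on the blow-up of $0$), and this is indeed the skeleton of the paper's Step~0 and the volume-contradiction strategy of Step~3. However, there is a genuine and central gap in your plan: you assert that the positive-dimensional log canonical centre $V$ ``is expected to be (a translate of) an abelian subvariety,'' and then use hypotheses (1) and (2) to rule out those $V$. That expectation is false, and in fact the entire difficulty of the theorem is concentrated in the case when $V$ is \emph{not} abelian. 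Conditions (1) and (2) are precisely tailored to kill the abelian cases cheaply (via Proposition~\ref{prop:inductive}/Corollary~\ref{cor:abelian surface} and a Hodge-index argument); if LCCs were always abelian, the statement would follow with very little work. The heavy machinery of the paper --- Lemma~\ref{lem:nakayama} forcing a non-abelian surface LCC to be singular at $0$; Proposition~\ref{prop:debarre} using Debarre's bound to rule out non-elliptic degenerate curves of small degree; Proposition~\ref{prop:nakamaye} (the \cite{ELN94}-style differentiation giving slope $\geq 1$ for multiplicity functions) together with Lemma~\ref{lem:uppermu} to control sums of subvarieties entering the base loci; and Theorem~\ref{thm:abelian} \`a la Cascini--Nakamaye to bound slices of $\tilde\Delta_{Y_\bullet}(B)$ when a curve with $\mult_0\geq 2$ appears --- exists entirely to handle non-abelian $V$.

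A second, smaller issue: you describe the differentiation technique of \cite{ELN94} as a way ``to improve the singularity at $0$ by differentiating along transverse directions,'' i.e.\ as another tool to cut the LC locus. In the paper it plays the opposite role: differentiation \emph{lowers} multiplicity linearly and hence gives a \emph{lower} bound on the growth of $\mult_V(\lVert B_t\rVert)$ in $t$ (Proposition~\ref{prop:nakamaye}), which is fed into intersection-theoretic and Newton--Okounkov slice estimates (Proposition~\ref{prop:nobodies}, Proposition~\ref{prop:decreasing}, Theorem~\ref{thm:abelian}). The actual cutting-down of the LC locus is done separately via inversion of adjunction (Propositions~\ref{prop:cuttingdown1}, \ref{prop:cuttingdown2}), and it is the \emph{failure} of these cut-down hypotheses (when the needed $(B\cdot C_t)$ or $\mult_x+ (1-c)\epsilon$ bounds do not hold) that yields the crucial upper bounds $\epsilon(B;0)\leq 6-t$ etc. on the Seshadri constant. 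The final contradiction in Step~3 then comes from case-splitting on $\epsilon(B;0)\in(0,1.5]$, $(1.5,2)$, $[2,3)$ and bounding the volume of $\tilde\Delta_{Y_\bullet}(B)$ below $59/6$ by Fubini; none of this structure appears in your outline. As written, your proposal would not close because it has no mechanism to handle a non-abelian LCC, which is the generic situation.
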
 
As a consequence of their work on Bridgeland stability conditions  on abelian threefolds Bayer--Macr\'i--Stellari \cite[Corollary 1.5]{BMS16} provide numerical results for base-point freeness and very ampleness (weaker than $N_0$) in this setup. Our main results are obtained with different methods and for a larger class of properties. We also provide better numerical data and exhibit exactly those subvarieties impacting the most the syzygies.

Based on \cite{LPP11} and \cite{ELY16}, we translate our global syzygetic properties into finding effective rational divisors from $L$ with certain singularities. So, as a by-product this brings to light a conjectural picture on the behaviour of singularity loci for effective divisors on abelian manifolds. 
\begin{conjecture}\label{conj:main2}
	Let $A$ be a $g$-dimensional complex abelian manifold and $B$ an ample $\RR$-divisor on $A$ with $(B^g) > g^g$. Then there exists an effective divisor $D\equiv B$, whose singularities are all concentrated on an abelian subvariety of $A$. More concretely, the log-canonical center of $D$ is a smaller-dimensional abelian subvariety of $A$. 
\end{conjecture}
To put this into perspective, the study of singularities for pluri-theta divisors, when $B=\Theta$, has attracted lots of attention. For example, Ein-Lazarsfeld \cite{EL97} show that any effective divisor $D\equiv \Theta$ has trivial log-canonical center, except when $(A,\Theta)$ splits as a product of elliptic curves. Conjecture \ref{conj:main2} turns the picture upside down and deals with the opposite case $B^g\geq g^g\gg \Theta^g=g!$. 

Under our conditions, there is always divisors with non-trivial log-canonical center, but the hard part is to force these centers to be abelian. This type of statements opens the door to induction type arguments when studying the properties of the pair $(A,B)$.
\begin{corollary}\label{cor:main}
If $A$ is an abelian threefold and $B$ an ample $\RR$-divisor on $A$ with $(B^3)>40$, then there is an effective divisor $D\equiv B$, whose log-canonical center is either the origin, an elliptic curve or an abelian surface.
\end{corollary}

\subsection*{Strategy of the proof}
Motivated by Hwang--To \cite{HT11}, \cite{LPP11} and some simplification from \cite{I18} reduce the verification of $(N_p)$ to constructing singular divisors with zero-dimensional canonical center. Aided by Ein--Lazarsfeld--Yang \cite{ELY16}, we first realize that the same reduction is possible for the vanishing of the Koszul cohomology groups $K_{p,1}$.

With this in hand, we turn our attention to the blow-up  $\pi: \overline{A}\rightarrow A$ at the origin $0\in A$ with  exceptional divisors $E\simeq\PP^2$. So, to find our singular divisors, we study the function
\[
 t\ \in \RR_+ \ \longrightarrow \ \Bstable\big(\pi^*L-tE\big) \ \subseteq \overline{A}\ . 
\]
Our goal is to study the variation, as a function of "$t$", of the stable base loci $\Bstable\big(\pi^*L-tE\big)$  and the behaviour of the asymptotic multiplicities, as developed in \cite{ELMNP06}, along its irreducible components.

Inspired by work of Faltings in diophantine geometry, Ein--Lazarsfeld--Nakamaye \cite{ELN94} pioneered the idea of differentiation in algebraic geometry. Due to \cite{N96}, on abelian varieties the multiplicity function of any irreducible component of $\Bstable(\pi^*(L)-tE)$ has large slope, as "$t$" varies. So, if this locus contains a non-degenerate subvariety, then $\Bstable(\pi^*(L)-tE)$ start containing pretty quickly sums of this subvariety until finally $\pi^*(L)-tE$ is no more pseudo-effective. 

As in \cite{KL19}, Newton--Okounkov bodies enable us to quantify numerically this geometric data, as they encode intersection numbers and form a powerful computational tool. So, using the infinitesimal picture of positivity, developed in  \cite{KL18,KL17}, the behaviour of our base loci prescribe restrictions to the shape of infinitesimal Newton-Okounkov bodies as convexs set in $\RR^3$. As their Euclidean volume is $L^3/6$, so quite large, we get a contradiction.

When our base loci consists only of abelian subvarieties we apply inversion of adjunction type techniques, as in \cite{EL93, AS95, Ka97, H97}. For example, when the intersection numbers of $L$ with respect to these loci are quite large, then by changing divisors we can cut the log-canonical center and be in good shape. Otherwise, the Seshadri contant of $L$ is forced to be small, which with some more work leads us to a contradiction of the conditions we have in the main statement.

\subsection*{Acknowledgements} The author thanks Alex K\"uronya for the mathematical support during this project. He is grateful to Lawrence Ein, Victor Gonzalez, Klaus Hulek and Mike Roth for helpful discussions, Atsushi Ito for providing an early copy of \cite{I18}, and Rikito Ohta for spotting a mistake in an earlier version. The connection between the vanishing of $K_{p,1}$ and singular divisors was previously discussed with Daniele Agostini and A. K\"uronya, allowing the author to publish it here.

\section{Preliminary results}

We start by presenting some of the preliminary techniques we use later. Most parts might be known to experts, but for the benefit of the reader we try to be as concise and complete as possible. Most notation and definitions in this article follow closely the standard textbooks \cite{Har} and \cite{PAG}.

\subsection{Asymptotic multiplicity, multiplier ideals and base loci}
In \cite{PAG} the theory of asymptotic multiplier ideals for line bundles is presented. For our purposes we need the more general one for big $\QQ$-divisors, which is not much different. So, in the following we state their main properties and show how they can be deduced from the same theory for line bundles.

	Let $X$ be a smooth complex projective variety, $D$ an effective big $\QQ$-Cartier divisor on $X$. Let $m>0$ an integer such that $mD$ is an integral Cartier divisor. The \textit{asymptotic multiplier ideal} of $D$ 
	\[
	\asy{X}{D} \deq \sJ(X, \frac{1}{m}\cdot \| mD \|  )\ ,
	\]
	where the right-hand side is the appropriate asymptotic multiplier ideal of the line bundle $\sO_X(mD)$.

\begin{lemma}\label{lem:multiplier}
	Let $X$ be a smooth projective variety and $D$ an effective big $\QQ$-Cartier divisor on $X$. 
	\begin{enumerate}
		\item The ideal sheaf $\asy{X}{D}$ is a numerical invariant and doesn't depend on the choice of $m$.
		\item For any $D'\equiv D$ effective $\QQ$-divisor, we have the inclusion of ideals
		\[
		\sJ(X,D') \ \subseteq \ \sJ(X,||D||)\ .
		\]
		Let  $D''\in |mD|$ be a general element for some $m\in \NN$  sufficiently large and divisible. Then for $D'=\frac{1}{m}D''$ the inclusion turns out to be an equality.
	\end{enumerate}
	
\end{lemma}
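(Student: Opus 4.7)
My plan is to deduce both claims from the corresponding statements about asymptotic multiplier ideals of big line bundles, as developed in \cite{PAG}*{Chapter~11}. The content of the lemma is essentially bookkeeping: every assertion about $\asy{X}{D}$ is transported into one about $\sJ(X, \|mD\|)$ for the big line bundle $\sO_X(mD)$, after clearing denominators by a suitable integer $m$ and absorbing the scaling factor $\tfrac{1}{m}$.

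For part (1), independence of $m$ is an instance of the rescaling identity $\sJ\bigl(X,\tfrac{1}{m}\|mL\|\bigr) = \sJ\bigl(X,\tfrac{1}{km}\|kmL\|\bigr)$ valid for any big line bundle $L$ and any $k \geq 1$: if $m_1 D$ and $m_2 D$ are both integral Cartier, simultaneously compare both of them against $m_1 m_2 D$. For numerical invariance, take $D \equiv D'$ big $\QQ$-Cartier divisors and pick $m$ such that $mD$ and $mD'$ are both integral Cartier; the line bundles $\sO_X(mD)$ and $\sO_X(mD')$ differ by a numerically trivial line bundle. The standard numerical invariance result for asymptotic multiplier ideals of big line bundles (see \cite{PAG}*{Chapter~11}) gives $\sJ(X, \|mD\|) = \sJ(X, \|mD'\|)$, and scaling by $\tfrac{1}{m}$ yields $\asy{X}{D} = \asy{X}{D'}$.

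For part (2), let $D'$ be an effective $\QQ$-divisor with $D' \equiv D$ and choose $m$ large and divisible so that both $mD$ and $mD'$ are integral Cartier. By part (1), $\asy{X}{D} = \asy{X}{D'} = \sJ\bigl(X, \tfrac{1}{m}\|mD'\|\bigr)$. Since $mD'$ is a particular element of the complete linear series $|mD'|$, and the asymptotic multiplier ideal of a linear series dominates the multiplier ideal of any of its members, one obtains
\[
\sJ(X, D')\ =\ \sJ\bigl(X, \tfrac{1}{m}(mD')\bigr)\ \subseteq\ \sJ\bigl(X, \tfrac{1}{m}\|mD'\|\bigr)\ =\ \asy{X}{D}\ ,
\]
proving the inclusion. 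For the equality statement, I enlarge $m$ further so that the ascending chain $\sJ\bigl(X, \tfrac{1}{km}\|kmD\|\bigr)$ has already stabilized, which must happen since it is an increasing chain of ideal sheaves on a Noetherian space. Once in the stable range, the classical fact that a general member of a linear series realizes the multiplier ideal of that series (cf.\ \cite{PAG}*{Section~9.2}) gives $\sJ\bigl(X, \tfrac{1}{m}D''\bigr) = \sJ\bigl(X, \tfrac{1}{m}\|mD\|\bigr) = \asy{X}{D}$ for generic $D'' \in |mD|$, which combined with the inclusion above yields the desired equality.

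I do not anticipate any serious obstacle: the whole argument is a careful translation of well-established line-bundle results from \cite{PAG} into the $\QQ$-Cartier setting, the only technical care being the precise tracking of the scaling factor $\tfrac{1}{m}$ and of the numerical comparisons after clearing denominators.
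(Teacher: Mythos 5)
Your proposal is correct and follows essentially the same route as the paper: both parts are deduced from the line-bundle theory of Chapter~11 in \cite{PAG} (rescaling for $m$-independence, numerical invariance of asymptotic multiplier ideals for part~(1), and the inclusion through linear-series multiplier ideals plus the generic-member realization for part~(2)). One small notational slip: the chain you invoke for stabilization should be the \emph{linear-series} multiplier ideals $\sJ\bigl(X,\tfrac{1}{km}|kmD|\bigr)$ rather than the asymptotic ones $\sJ\bigl(X,\tfrac{1}{km}\|kmD\|\bigr)$, which are already constant by definition; with that correction the argument is precisely \cite{PAG}*{Proposition~11.1.4} as cited in the paper.
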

\begin{proof} $(1)$ For those $m_1,m_2>0$ that make sense, \cite[Theorem 11.1.8]{PAG} yields
	\[
	\sJ(X,\frac{1}{m_1}\cdot \| m_1D  \|  ) \equ \sJ(X,\frac{1}{m_1m_2}\cdot \| m_2m_1D  \|  )
	\equ \sJ(X,\frac{1}{m_2}\cdot \| m_2D  \|  )\ .
	\]
	Together wit \cite[Example 11.2.12]{PAG}, these imply the statement. 
	
	$(2)$ Taking $m\gg 0$ and divisible enough, we have the following inclusions
	\[
	\sJ(X,D' ) \subseteq  \sJ(X, \frac{1}{m}\cdot | mD'| ) \equ \sJ(X;\frac{1}{m}\cdot || mD' || ) \ = \ \sJ(X;\frac{1}{m}\cdot || mD || )\ = \ \sJ(X, || D || ) \ .
	\]
	The first inclusion is implied by \cite[Proposition 9.2.32]{PAG}. The equalities follow in order from \cite[Proposition 11.1.4 and Example 11.3.12]{PAG} and the definition. The inclusion becomes an equality exactly as in our statement is due to \cite[Proposition 9.2.26]{PAG}.
\end{proof}

\subsection{Base loci of numerical classes}
Let $D$ be a big $\QQ$-divisor on a complex smooth projective variety $X$. The \textit{stable base locus} of $D$ is the Zariski-closed set 
\[
\Bstable (D) \ \deq \ \bigcap_{m>0} \textup{Bs}(mD) \ ,
\]
where the intersection is taken over all $m\gg 0$ with $mD$ being a Cartier divisor. The set $\textup{Bs}(mD)$ is the base locus of the linear system $|mD|$. It turns out that $\Bstable(D)$ is not a numerical invariant.

For this reason \cite{ELMNP06} introduces two approximations of this locus. The \textit{augmented base locus} and the \textit{restricted base locus} of $D$ are defined as follows
\[
\Bplus(D) \ \deq \ \bigcap_{m>0}\Bstable(D-\frac{1}{m}A) \ \textup{ and (resp.) } \ \Bminus(D) \ \deq \ \bigcup_{m>0}\Bstable(D+\frac{1}{m}A)\ ,
\]
where $A$ is an ample class. Note $\Bminus(D)\subseteq \Bstable(D)\subseteq \Bplus(D)$. More importantly, these two new loci don't depend on the choice of the ample class $A$ and are numerical invariants of $D$. However, $\Bminus(D)$ might not be Zariski closed, see \cite{Le14}, but $\Bplus(D)$ is always closed. Luckily, for us these loci can be assumed to be equal. So, any strange phenomenon does not interfere with our computations.

To measure how badly the numerical class of $D$ vanishes along a subvariety $V\subseteq X$ we use the multiplicity. So, the \textit{asymptotic multiplicity} of $D$ along $V$ is defined to be
\[
\mult_V(||D||) \ = \ \lim_{m\rightarrow \infty}\ \frac{\mult_V(|mD|)}{m} \ .
\]
By \cite{ELMNP06}*{Proposition~2.8}, it turns out that $\mult_V(||D||)>0$ if and only if  $V\subseteq \Bminus(D)$.

Through out the paper we use frequently the following lemma in an implicit way.
\begin{lemma}\label{lem:multiplicity}
	Let $\epsilon>0$ be some real number. Then the following two conditions are equivalent:
	\begin{enumerate}
		\item $\mult_V(||D||) \ \geq  \ \epsilon$.
		\item $\mult_V(D') \ \geq  m\epsilon$ for all $m\gg 0$ and divisible enough and any generic choice of $D'\in|mD|$.
	\end{enumerate}
\end{lemma}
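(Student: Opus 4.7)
The plan is to reduce both implications to the standard subadditivity of multiplicities of linear systems combined with Fekete's lemma. The key preliminary observation is that for a \emph{generic} element $D' \in |mD|$, the multiplicity $\mult_V(D')$ coincides with the base-ideal multiplicity $\mult_V(|mD|) \deq \min_{D'' \in |mD|} \mult_V(D'')$. Indeed, the condition $\mult_V(D'') \geq k$ is closed in $|mD|$, while the locus where $\mult_V(D'') = \mult_V(|mD|)$ is open and non-empty; so a generic divisor in the system realizes the minimum.

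Next, I would record subadditivity of $m \mapsto \mult_V(|mD|)$: given $m_1, m_2$ with $m_iD$ Cartier and divisors $D_i \in |m_iD|$ achieving the minimum multiplicity, we have $D_1 + D_2 \in |(m_1+m_2)D|$ with
\[
\mult_V(D_1 + D_2) \ = \ \mult_V(D_1) + \mult_V(D_2) \ = \ \mult_V(|m_1D|) + \mult_V(|m_2D|),
\]
hence $\mult_V(|(m_1+m_2)D|) \leq \mult_V(|m_1D|) + \mult_V(|m_2D|)$. By Fekete's lemma applied along any sequence of sufficiently divisible $m$'s, the limit defining $\mult_V(\|D\|)$ exists and equals the infimum $\inf_m \mult_V(|mD|)/m$ taken over divisible $m$.

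The implication $(1) \Rightarrow (2)$ then follows directly: the equality of limit and infimum yields $\mult_V(|mD|)/m \geq \mult_V(\|D\|) \geq \epsilon$ for every sufficiently divisible $m$, and for a generic $D' \in |mD|$ the opening observation gives $\mult_V(D') = \mult_V(|mD|) \geq m\epsilon$. For $(2) \Rightarrow (1)$, the hypothesis gives $\mult_V(|mD|) \geq m\epsilon$ for all $m \gg 0$ divisible enough, so passing to the limit yields $\mult_V(\|D\|) \geq \epsilon$.

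There is no serious obstacle here; the statement is essentially a bookkeeping consequence of how the asymptotic multiplicity is defined. The only point requiring any care is the identification of generic multiplicity with the minimum multiplicity in the linear system, which is a routine semicontinuity check: one must verify that the Zariski-open dense locus of $|mD|$ on which $\mult_V(D')$ attains its minimum actually matches the intended interpretation of ``any generic choice of $D' \in |mD|$'' in condition $(2)$.
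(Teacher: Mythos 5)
Your proof is correct, and the core ideas match the paper's: identify the generic multiplicity in $|mD|$ with the minimum (via upper semicontinuity of multiplicity on the projective space $|mD|$), and then reduce both directions to knowing that the defining limit of $\mult_V(\|D\|)$ coincides with the infimum of $\mult_V(|mD|)/m$. Where you diverge is in how that last identification is obtained: the paper invokes Lemma~3.3 of \cite{ELMNP06}, which gives the stronger statement that $\mult_V(\|D\|)=\inf_{D'\equiv D}\mult_V(D')$ over \emph{all} effective $\QQ$-divisors numerically equivalent to $D$, whereas you run a self-contained subadditivity plus Fekete argument on the sequence $m\mapsto\mult_V(|mD|)$. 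Your route is more elementary and avoids the external citation, at the cost of only giving the infimum over the smaller family $\{D'/m: D'\in|mD|\}$; that weaker statement is all that is needed here, and the additivity $\mult_V(D_1+D_2)=\mult_V(D_1)+\mult_V(D_2)$ that your subadditivity step rests on does hold for multiplicities of effective divisors along an irreducible subvariety. Both routes are valid; yours is arguably cleaner for this particular lemma, while the paper's citation is more natural given that Lemma~3.3 of \cite{ELMNP06} is used elsewhere in the same circle of ideas.
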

\begin{proof}
	By \cite[Lemma 3.3]{ELMNP06}, we have the following equality
	\[
	\mult_V(||D||) \ = \ \underset{D'\equiv D, D\geq 0}{\textup{inf}}\ \mult_V(D') \ .
	\]
Moreover, $\mult_V(|mD|)=\mult_V(D')$ for a generic $D'\in |mD|$. Together with the definition of asymptotic multiplicity, this implies easily the statement.
\end{proof}

\subsection{The geometry of the blow-up}
Let $X$ be a smooth projective variety and $\pi:\overline{X}\rightarrow X$ the blow-up of $X$ at a point $x\in X$ with exceptional divisor $E$. For an ample $\QQ$-divisor $B$ on $X$ denote by
\[
B_t \ \deq \ \pi^*(B) \ - \ tE, \textup{ for any } t\geq 0 \ .
\]
To this data we can associate two invariants. First, \textit{the Seshadri constant} of $B$ at $x$ is defined to be 
\[
\epsilon(B,x) \ \deq \ \inf_{x\in C}\frac{(B\cdot C)}{\mult_x(C)} \ = \ \sup\{t>0 \ | \ B_t \textup{ is nef (ample) }\} \ > \ 0 \ .
\]
The second invariant, called \textit{the infinitesimal width} of $B$ at $x$, is given by the formula
\[
\mu(B,x) \ \deq \ \sup\{t>0 \ | \ B_t \textup{ is }\QQ-\textup{effective}\} \ < \ \infty\ .
\]
Notice that $\mu(B,x)\geq \epsilon(B,x)$ (see \cite[Chapter V]{PAG} for more information on the Seshadri constant). \footnote{These invariants can be defined similarly for singular spaces, and in Section~3.2 we do so for surfaces.} 
\begin{lemma}\label{lem:baselocus}{(\textbf{Behavior of infinitesimal base loci}).}
	\begin{itemize}
		\item[(a)] The function $t\rightarrow \ \Bminus(B_t)$ or $\Bplus(B_t)$ is increasing with respect to inclusion.
		\item[(b)] Let $t_0\geq 0$ be some positive real number. Then there is $0<\delta \ll 1$ such that 
		\[
		\Bminus(B_t)\ =\ \Bstable(B_t)\ =\ \Bplus(B_t)
		\]
		is constant for any $t\in (t_0,t_0+\delta)$.
	\end{itemize}
\end{lemma}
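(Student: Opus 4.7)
For (a), my plan is to exploit the convexity of the asymptotic multiplicity functions. For any closed subvariety $V\subseteq X'$, the function $D\mapsto \mult_V(\|D\|)$ is subadditive on the big cone---take generic representatives $D'_i\in|mD_i|$, sum them, and let $m\to\infty$---and positively $\QQ$-homogeneous, hence convex; moreover, since $\pi^*B$ is nef on $X'$, one has $\Bminus(\pi^*B)=\emptyset$, so by \cite{ELMNP06}*{Proposition~2.8} we get $\mult_V(\|\pi^*B\|)=0$ for every $V$. For $0<s<t\leq\mu(B;x)$, the identity of $\QQ$-classes
\[
B_s \ =\ \frac{t-s}{t}\,\pi^*B \ +\ \frac{s}{t}\,B_t
\]
together with convexity yields $\mult_V(\|B_s\|)\leq \frac{s}{t}\mult_V(\|B_t\|)$; hence $V\subseteq \Bminus(B_s)\Rightarrow V\subseteq \Bminus(B_t)$, proving the desired monotonicity for $\Bminus$. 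For $\Bplus$, apply the same argument to $B_s-\eta A$ (with $A$ ample on $X'$ and $\eta>0$ small), decomposed as
\[
B_s-\eta A \ =\ \frac{t-s}{t}\,\pi^*B \ +\ \frac{s}{t}\,\bigl(B_t-\tfrac{\eta t}{s}A\bigr),
\]
to obtain $\mult_V(\|B_s-\eta A\|)\leq \frac{s}{t}\mult_V(\|B_t-\eta'A\|)$ with $\eta'=\eta t/s$; using that $V\subseteq\Bplus(D)\iff V\subseteq\Bminus(D-\eta A)$ for all sufficiently small $\eta>0$, this gives $\Bplus(B_s)\subseteq \Bplus(B_t)$.

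For (b), I first observe that $\Bminus(B_t)=\bigcup_m\Bstable(B_t+\tfrac{1}{m}A)$ is an ascending union of closed subsets of the Noetherian variety $X'$, hence stabilizes at some finite index and is itself closed; similarly $\Bplus(B_t)=\Bstable(B_t-\eta A)$ is closed by definition. Together with the monotonicity from part (a), both $\{\Bminus(B_t)\}_t$ and $\{\Bplus(B_t)\}_t$ are ascending chains of closed subsets of $X'$ and so take only finitely many distinct values on $(0,\mu(B;x)]$ by Noetherianity. Therefore there exists $\delta_0>0$ such that both functions are constant on $(t_0,t_0+\delta_0)$.

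To upgrade this constancy into the claimed three-term equality, I invoke the chamber structure of the big cone from \cite{ELMNP06}: the set of big classes $D$ with $\Bminus(D)=\Bplus(D)$ is the complement, inside the big cone of $X'$, of a locally finite family of rational hyperplanes. The affine segment $\{B_t\}$ therefore meets this dense open locus along a dense subset of $(t_0,t_0+\delta_0)$; combined with the step-function structure just established, the two constant values $\Bminus(B_t)$ and $\Bplus(B_t)$ on this interval must agree, and the sandwich $\Bminus(B_t)\subseteq \Bstable(B_t)\subseteq \Bplus(B_t)$ then forces the three-way equality. The main obstacle is precisely this final step---the generic coincidence of restricted and augmented base loci on the big cone is a nontrivial input from \cite{ELMNP06}; the rest of the argument rests solely on the elementary convexity of asymptotic multiplicity and Noetherianity of $X'$.
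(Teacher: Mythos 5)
Your proof of part (a) is correct and follows a genuinely different route from the paper.  You exploit the convexity of $D\mapsto\mult_V(\|D\|)$ on the big cone and the vanishing $\mult_V(\|\pi^*B\|)=0$ (from nefness of $\pi^*B$), writing $B_s$ and $B_s-\eta A$ as convex combinations involving $\pi^*B$; the paper instead argues directly from the definition of the stable base locus using that $E$ is effective (multiplying sections of $mB_{t'}$ by the canonical section of $m(t'-t)E$).  Your convexity argument is arguably cleaner, and in particular handles subvarieties contained in $E$ with no extra care.

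Part (b), however, has genuine gaps.  The Noetherianity argument is wrong: a Noetherian topological space satisfies the \emph{descending} chain condition on closed subsets, not the ascending one.  Thus the ascending union $\Bminus(B_t)=\bigcup_m\Bstable(B_t+\tfrac1m A)$ has no reason to stabilize at a finite index or to be closed --- indeed the paper itself cites Lesieutre's example \cite{Le14} of a divisor whose restricted base locus is a countable, non-closed union.  For the same reason, an increasing $\RR$-indexed family of closed subsets of a Noetherian space can take infinitely many distinct values, so your claim that $\{\Bplus(B_t)\}$ takes only finitely many values on $(0,\mu]$ is unjustified (the remark after the lemma in the paper explicitly leaves open whether the jump set is finite, infinite with accumulation points, etc.).  The final step --- that ``the set of big classes $D$ with $\Bminus(D)=\Bplus(D)$ is the complement of a locally finite family of rational hyperplanes,'' attributed to \cite{ELMNP06} --- is not a result of that paper; such a polyhedral chamber structure is known only on surfaces.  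The paper's actual argument is entirely different and avoids all finiteness input: since $B_r$ is ample for a rational $r<\epsilon(B;x)$, one has $\Bplus(B_{t_0})=\Bstable(B_{t_0}-\lambda B_r)$ for $0<\lambda\ll1$, and the elementary identity $B_{t_0}-\lambda B_r=(1-\lambda)\,B_{t'}$ with $t'=\frac{t_0-\lambda r}{1-\lambda}>t_0$ shows that $\Bplus(B_{t_0})$ coincides with $\Bstable(B_{t'})$ (and, by the same manipulation with an ample perturbation, with $\Bminus(B_{t'})$) for all $t'$ slightly larger than $t_0$.  Without this rescaling trick or some substitute, the argument as you wrote it does not go through.
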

\begin{remark}
	Consequently, there  are countably many points where these base loci can jump. We don't know if the set of jumps has (in)finitely many accumulation points, or just $\mu(B,x)$. In \cite{KLM12} it is shown that for surfaces there are finitely many jumps.
\end{remark}
\begin{proof}
	$(a)$ As $E$ is effective, the definition of the stable base locus implies that the function $t\rightarrow \Bstable(B_t)$ is increasing. Again by definition, the same can be said about the other two loci.
	
	$(b)$ Without loss of generality assume $t_0\geq \epsilon(L,x)$. Let $r\in (0, \epsilon(B,x))$ be some rational number. For any $0<\lambda\ll 1$ we have have the following equalities of sets
	\[
	\Bplus(B_{t_0}) \ = \ \Bstable\Big(B_{t_0}-\lambda B_r\Big)\ =\ \Bminus\Big(B_{t_0}-\lambda B_r\Big) \ = \ \Bstable\Big(B_{\frac{t_0-\lambda \cdot r}{1-\lambda}}\Big) \ .
	\]
	As $B_r$ is ample, the first equality follows from the definition of augmented base locus. The function $t\rightarrow \Bstable(B_t)$ is increasing, so the definition of the restricted base yields the second one. Finally, our asymptotic base loci don't change by taking powers of the class, so we get the third one. 
	
Finally, as $t_0-\lambda \cdot r > (1-\lambda)t_0$, then $	\Bplus(B_{t_0})= \Bstable(B_{t_0+\delta}) =\Bminus(B_{t_0+\delta})$	for any $0<\delta\ll 1$.
\end{proof}
Fix $\overline{Y}\subseteq X'$ a subvariety. Let $m_{\overline{Y}} : [0,\mu(B,x)] \rightarrow \RR_+$ be \textit{the multiplicity function}, given by
	\[
m_{\overline{Y}}(t)\ \deq \ \textup{mult}_{\overline{Y}}\Big(||\pi^*(B)-tE||\Big) \ .
	\] 
It is also important to encode, when this function start being non-zero. So, we define
	\[
	t_{\overline{Y}}\ \deq \ \textup{min}\{t>0\ |\ \overline{Y}\subseteq \Bminus(B_t)\} \ .
	\]
	Without any sort of confusion, for a subvariety $Y\subseteq X$ we denote the above quantities by $t_Y$ and $m_Y$, as computed for the proper transform $\overline{Y}$ of $Y$ on the blow-up $X'$.
\begin{lemma}\label{lem:convexity}{(\textbf{Convexity}).}
$m_{\overline{Y}}: [0,\mu(B,x)] \rightarrow \RR_+$ is a continuous, increasing and convex function.
\end{lemma}
\begin{proof}
Since $E$ is effective, we can see easily that $m_{\overline{Y}}$ is increasing. To prove convexity and continuity, let $0<t_0\leq t_1\leq \mu(B,x)$ and $s\in (0,1)$. Then we have the following inequalities
	\[
	(1-s)m_{\overline{Y}}(t_0)  \ + \ s\cdot m_{\overline{Y}}(t_1) \ \geq \ \textup{mult}_{\overline{Y}}\Big(||(1-s)B_{t_0}+sB_{t_1}||\Big) \ = \ m_{\overline{Y}}((1-s)t_0+st_1) \ .
	\]
	The inequality is due to rescaling and convexity of the function $\xi\in\textup{N}^1(X')_{\RR}\rightarrow \mult_{\overline{Y}}(||\xi||)\in \RR$ from \cite[Remark 2.3 and Proposition 2.4]{ELMNP06}. Ditto for continuity and we finish the proof.
\end{proof}

\subsection{Inversion of adjunction}
Inversion of adjunction is a powerful tool in algebraic geometry, providing ways to cut down the singular locus of divisors. The material is inspired by \cite{E97} and \cite{PAG}.

For an effective $\QQ$-divisor $D$ on a smooth projective variety $X$, the \textit{log-canonical threshold} is
\[
\textup{lct}(D) \ \deq \ \textup{inf}\{c>0 \ | \ \sJ(X;c\cdot D) \ \neq \ \sO_X\} \ .
\]
The divisor $D$ is called \textit{log-canonical} if $\textup{lct}(D)=1$. The \textit{log-canonical locus} of $D$  is defined to be 
\[
\textup{LC}(D) \ \deq \ \textup{Zeroes}\big(\sJ(X;cD)\big) \ , \textup{ where } c=\textup{lct}(D) \ .
\]
Modifying the divisor we can get the log-canonical center to be irreducible.
\begin{lemma}\label{lem:trick}{(\textbf{Tie-breaking trick}).}
	Let $D$ be an effective $\QQ$-divisor on $X$ and $c=\textup{lct}(D)$. There exists an effective divisor $E$ in some ample class $B$, satisfying the property that for any $0<\epsilon\ll 1$ there are rational numbers $0<c_{\epsilon}<c$ and $t_{\epsilon}>0$ such that the lc-locus of the divisor 
	\[
	D_{\epsilon} \ \deq \ c_{\epsilon}D\ + \ t_{\epsilon} E \ 
	\]
	is an irreducible normal subvariety of $\textup{LC}(D)$. Moreover, $c_{\epsilon}\rightarrow c$ and $t_{\epsilon}\rightarrow 0$ whenever $\epsilon\rightarrow 0$.
\end{lemma}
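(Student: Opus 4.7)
The plan is to pass to a log resolution of $(X,D)$ and build $D_\epsilon$ by enforcing that precisely one log canonical place survives with discrepancy exactly $-1$.

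Let $\mu\colon X'\to X$ be a log resolution of $(X,D)$. Writing $\mu^*D=\sum_i b_i F_i$ and $K_{X'/X}=\sum_i k_i F_i$ with the $F_i$ forming a simple normal crossings divisor, the finite set of log canonical places is
\[
\mathcal{P} \ = \ \{\,F_i \ : \ k_i-c\,b_i \ = \ -1\,\},
\]
and $\textup{LC}(D)=\bigcup_{F_i\in\mathcal{P}}\mu(F_i)$. Pick $F_{i_0}\in\mathcal{P}$ so that the image $V=\mu(F_{i_0})$ is minimal with respect to inclusion among the log canonical centers. By the theorem of Kawamata on minimal log canonical centers, $V$ is normal.

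Fix the ample class $B$. Take $m\gg 0$ and choose a sufficiently general effective $\QQ$-divisor $E\equiv B$, obtained by rescaling a general element of $|mB|$, that passes through $V$ with prescribed positive multiplicity but avoids every other log canonical center. Set $e_i=\textup{ord}_{F_i}(\mu^*E)$. For general $E$ and large $m$ one has $e_{i_0}>0$ while $e_j=0$ for every $F_j\in\mathcal{P}$ whose image is not contained in $V$, because $|mB|$ is very ample and a general member avoids any prescribed subvariety not explicitly imposed. Now define
\[
c_\epsilon \ = \ c-\delta_\epsilon,\qquad t_\epsilon \ = \ \frac{\delta_\epsilon\,b_{i_0}}{e_{i_0}},
\]
for a small rational $\delta_\epsilon>0$. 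On $X'$, the discrepancy of $D_\epsilon=c_\epsilon D+t_\epsilon E$ along $F_i$ equals
\[
(k_i-c\,b_i)\ +\ \delta_\epsilon b_i\ -\ \frac{\delta_\epsilon b_{i_0}}{e_{i_0}}\,e_i,
\]
which is exactly $-1$ for $i=i_0$ and strictly larger than $-1$ for every other $F_i\in\mathcal{P}$, thanks to the choice of $E$. Consequently $F_{i_0}$ is the unique log canonical place of $D_\epsilon$, so $\textup{LC}(D_\epsilon)=V$, which is irreducible and normal; letting $\delta_\epsilon\to 0$ gives $c_\epsilon\to c$ and $t_\epsilon\to 0$.

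The main obstacle is the internal tie-breaking when several log canonical places of $\mu$ collapse onto the minimal center $V$ itself (or when several of the $\mu(F_j)$ strictly contain $V$). Handling this uses the ampleness of $B$ in an essential way: for $m$ large enough, $|mB|$ separates sufficiently many jets along $V$ so that $E$ can be further perturbed within its linear equivalence class to make the ratio $e_{i_0}/b_{i_0}$ the strict maximum among $\{e_i/b_i : F_i\in\mathcal{P}\}$. This is precisely the maneuver present in Kawamata's original tie-breaking trick, and it completes the argument.
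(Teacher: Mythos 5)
Your argument follows the same route as the paper's reference (the paper simply cites Lemma~10.4.8 of \cite{PAG} for the tie-breaking and \cite{Ka97} for normality), so the overall strategy is sound, and your use of Kawamata's normality theorem for the minimal LC centre $V$ is exactly what the author has in mind.

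However, there are two intertwined issues in your final step. First, the formula $t_\epsilon = \delta_\epsilon b_{i_0}/e_{i_0}$ keeps the discrepancy at $F_{i_0}$ equal to $-1$, but if another $F_j\in\mathcal{P}$ with $\mu(F_j)=V$ satisfies $e_j/b_j > e_{i_0}/b_{i_0}$, then the discrepancy at $F_j$ becomes \emph{strictly less than} $-1$, so $D_\epsilon$ fails to be log canonical and the argument collapses. Second, the proposed repair in your last paragraph — perturbing $E$ so that $e_{i_0}/b_{i_0}$ becomes the strict maximum by separating jets along $V$ — is both doubtful and unnecessary. It is doubtful because the quantities $e_i=\mathrm{ord}_{F_i}(\mu^*E)$ for places $F_i$ dominating $V$ are governed by the resolution geometry transverse to $V$, and a general member of $|mB|$ through $V$ will typically give a fixed profile of $e_i$'s that cannot be tilted at will in favour of one place. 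It is unnecessary because you do not need $F_{i_0}$ to be the \emph{unique} LC place — you only need every surviving place to dominate $V$. The correct normalization is $t_\epsilon = \delta_\epsilon/\lambda$ with $\lambda = \max\{e_i/b_i : F_i\in\mathcal{P}\}$. Then every $F_i\in\mathcal{P}$ satisfies $a(F_i;X,D_\epsilon)\geq -1$ with equality exactly when $e_i/b_i=\lambda$; since $\lambda>0$, every such maximizer has $e_i>0$ and hence, by your choice of $E$ (general in the subsystem of $|mB|$ containing $V$, so $\mu(F_i)\not\subset E$ whenever $\mu(F_i)\neq V$), dominates $V$. This gives $\mathrm{LC}(D_\epsilon)=V$ directly. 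You should also state that the computation is carried out on a log resolution of $(X,D+E)$ rather than of $(X,D)$ alone, and invoke finiteness of exceptional divisors to pick $\delta_\epsilon$ small enough that every $F_i\notin\mathcal{P}$ keeps discrepancy above $-1$; this is routine but your write-up omits it.
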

\begin{proof}
This statement is a global analogue of \cite[Lemma 10.4.8]{PAG} and the proof is exactly the same, so we don't reproduce it here. Normality instead follows from \cite[Theorem 1.6]{Ka97}. 
\end{proof}
We say $Z\subseteq X$ is a \textit{critical subvariety} of $D$ if $Z=\textup       {LC}(D_{\epsilon})$  as in Lemma~\ref{lem:trick}. The proof of \cite[Lemma 10.4.8]{PAG} shows $Z$ is independent of the choice of  $0<\epsilon\ll 1$.  
Moreover, \cite[Remark 2.1]{I18} shows some interesting phenomena when the critical variety is zero-dimensional.
\begin{lemma}\label{lem:ito}
Let $D\equiv B$ be an effective $\QQ$-divisor on $X$ with $\textup{lct}(D)<1$, where $B$ is an ample class. If $\textup{LC}(D)$ is zero-dimensional, then there is another effective $\QQ$-divisor $D'\equiv c'B$ with $0<c'<1$ satisfying the property that $\sJ(X,D')=\m_x$ for some $x\in X$.
\end{lemma}
\begin{proof}
We can assume that $D$ is log-canonical. Let $D'$ be as in Lemma~\ref{lem:trick} and $x\in X$ the unique point in the co-support of $\sJ(X,D')$. Let $\pi' : Y \longrightarrow X$	be a log-resolution of $D'$, obtained by blowing up smooth centers on $X$. Now, any irreducible exceptional divisor $F$ on $Y$ that counts in the construction of $ \sJ(X;D')$ is contracted to $x$ by $\pi'$ and has coefficient $-1$. In particular, $\pi'$ factors out through the blow-up $\pi:X'\rightarrow X$ at $x$ and exceptional divisor $E$. Furthermore, any such $F$ maps to $E$. So, taking any $f\in\m_x$, then $\mult_F((\pi')^*(f))\geq 1$ as we have automatically $\mult_E(\pi^*(f))\geq 1$. In particular, $\m_x\subseteq \sJ(X;D')$ and since the latter is not-trivial it is actually an equality.
\end{proof}
We explain inversion of adjunction in two cases, where one is definitely known to the experts.
\begin{proposition}[\textbf{Cutting down the LC locus I}]
	\label{prop:cuttingdown1}
	Let $D\equiv cB$ a log-canonical effective $\QQ$-divisor for some $0<c<1$, where $B$ is ample. If $V=\textup{LC}(D)$ is an irreducible smooth curve and
	\[
	(1-c)\cdot (B\cdot V) \ > \ 1 \ ,
	\]
	then there is an effective $\QQ$-divisor $D'\equiv (1-c)B$ such that the LC-locus of $(1-\delta)D+D'$ is zero-dimensional for any $0<\delta\ll 1$. 
\end{proposition}
\begin{proof}
	 By Riemann-Roch for any $0< \xi \ll 1$ there is an effective divisor $D^{\xi}_V\equiv B|_V$ with 
	\[
	\mult_x(D^{\xi}_V) \ \geq (B\cdot V) \ - \ \xi 
	\]
	at a fixed point $x\in V$ and $\mult_y(D^{\xi}_V)\leq \xi$ at any $y\in V\setminus \{x\}$.
	
Let $m\gg 0$ be a divisible enough interger, so that $mD^{\xi}_V$ is Cartier, $mB$ defines a very ample line bundle, the restriction map on global sections $H^0(X,\sO_X(mB))\rightarrow H^0(V,\sO_V(mB))$ is surjective and $\sI_{V|X}(mB)$ is globally generated. All these implies that the linear series of divisors 
	\[
	\{D^{\xi}_m\in|mB|, \textup{ where } D^{\xi}_m|_V=mD^{\xi}_V\}
	\]
	 has no base points on $X\setminus V$. Let $D^{\xi}\deq\frac{1-c}{m}D^{\xi}_m$ for some general $D^{\xi}_m\in|mB|$ with $D^{\xi}_m|_V=mD^{\xi}_V$. Then we have the equality of ideal sheaves
	\[
	\sJ(X,(1-\delta)D+D^{\xi})|_{X\setminus V} \ =  \ \sJ(X,(1-\delta)D)|_{X\setminus V} \ = \ \sO_{X\setminus V}\ ,
	\]
	for any $0<\delta\ll 1$, by applying \cite[Example 9.2.29]{PAG} and the fact that $D$ is log-canonical. The same equalities hold at $y\in V\setminus\{x\}$ and taking $0<\xi\ll\delta\ll 1$. 
	
	It remains to show that $\sJ(X,(1-\delta)D+D^{\xi})$ is non-trivial at $x$. As $V$ is a smooth curve, then $\mult_x(D^{\xi})>1$ whenever $\xi\ll 1$. The statement is then implied by \cite[Proposition 10.4.10]{PAG}.
\end{proof}

\begin{proposition}[\textbf{Cutting down the LC locus II}]
	\label{prop:cuttingdown2}
	Let $B$ be an ample class on $X$ and $D\equiv cB$ be a log-canonical effective $\QQ$-divisor for some $0<c<1$. Suppose for some point $x\in X$ we have
	\[
	\mult_x(D)+(1-c)\epsilon(B,x) > \textup{dim}(X) \ .
	\] 
	Then there is an effective divisor $D'\equiv (1-c)B$ with $\textup{LC}((1-\delta)D+D') = \{x\}$	for any $0<\delta \ll 1$.	
\end{proposition}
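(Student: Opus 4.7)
The plan is to produce $D'$ as $(1-c)/m$ times an effective divisor in $|mB|$ with very high multiplicity at $x$, obtained from the Seshadri constant. Let $\pi \colon X' \to X$ be the blow-up of $X$ at $x$ with exceptional divisor $E$. By the definition of $\epsilon(B;x)$, for every $\eta > 0$ the class $\pi^*B - (\epsilon(B;x)-\eta)E$ is ample on $X'$, so for $m \gg 0$ sufficiently divisible the linear system $|m\pi^*B - m(\epsilon(B;x)-\eta)E|$ is very ample. A Bertini argument on $X'$, relative to a fixed log resolution $\mu_0 \colon Y_0 \to X$ of $D$, yields a general member $G$ which is smooth, irreducible, meets $E$ transversely, and whose image $F \deq \pi_*G$ is an effective divisor in $|mB|$ that is smooth on $X\setminus\{x\}$, has $\mult_x F = m(\epsilon(B;x)-\eta)$, shares no component with $D$, and whose strict transform on $Y_0$ is smooth and snc-transverse to $\mu_0^*D$ over every $y\neq x$.

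Setting $D' \deq \tfrac{1-c}{m}F$ gives an effective $D' \equiv (1-c)B$ with $\mult_x D' = (1-c)(\epsilon(B;x)-\eta)$. For $D'' \deq (1-\delta)D + D'$, the multiplicity at $x$ reads
\[
\mult_x(D'') \,=\, (1-\delta)\mult_x(D) + (1-c)(\epsilon(B;x)-\eta).
\]
Using the hypothesis $\mult_x(D)+(1-c)\epsilon(B;x) > \dim X$, I first fix $\eta>0$ small enough that $\mult_x(D)+(1-c)(\epsilon(B;x)-\eta) > \dim X$, and then pick $0<\delta\ll 1$ so that $\mult_x(D'') > \dim X$ as well. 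The standard multiplicity estimate $\lct_x(D'') \leq \dim X / \mult_x(D'') < 1$ then places $x$ inside $\textup{LC}(D'')$.

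It remains to check that $\textup{LC}(D'') = \{x\}$, i.e., $(X,D'')$ is klt at every $y\neq x$. Since $D$ is log-canonical, on $\mu_0$ the discrepancies $a_i - b_i$ satisfy $a_i - b_i \geq -1$, which forces $a_i - (1-\delta)b_i > -1$, so $(X,(1-\delta)D)$ is klt. The general position of $F$ guarantees that no exceptional divisor of $\mu_0$ over $y$ appears in $\mu_0^*F$, so adding $D'$ only contributes a new smooth component $\widetilde{F}$ (the strict transform of $F$) with coefficient $(1-c)/m<1$, keeping all discrepancies strictly above $-1$. Hence $\lct_y(D'') > 1$ for every $y\neq x$, forcing $\lct(D'') = \lct_x(D'') < 1$ and making the cosupport of $\sJ(X,\lct(D'')\cdot D'')$ equal $\{x\}$. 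The main technical obstacle is the Bertini/general-position step: verifying that a generic $F$ simultaneously realizes the prescribed multiplicity $m(\epsilon(B;x)-\eta)$ at $x$ and is in sufficiently general position with respect to the log resolution of $D$ at other points of $X$ — standard, but requiring care in combining Bertini for very ample systems with the chosen stratification on $Y_0$.
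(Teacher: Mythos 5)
Your argument is correct and follows essentially the same route as the paper: use the Seshadri constant to show that the linear subsystem of $|mB|$ consisting of divisors with multiplicity at least $m(\epsilon(B;x)-\eta)$ at $x$ is base-point free away from $x$, take a general member $D_m$, set $D' = \tfrac{1-c}{m}D_m$, and conclude by checking that $\sJ(X,(1-\delta)D+D')$ is trivial off $x$ but non-trivial at $x$ (the latter via $\mult_x > \dim X$). Where you genuinely deviate is in the verification that the multiplier ideal is trivial away from $x$: the paper simply invokes Example~9.2.29 of \cite{PAG}, which says that adding a small multiple of a general member of a linear system free near a point does not change the multiplier ideal there, while you reprove this special case by hand through Bertini and a discrepancy computation on a fixed log resolution $\mu_0$ of $D$. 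That buys a more self-contained argument, but it needs a bit of extra care to be airtight: you should choose $\mu_0$ to factor through the blow-up $\pi$ at $x$ so that the linear system pulls back to one that is base-point free on $\mu_0^{-1}(X\setminus\{x\})$ and Bertini gives exactly the transversality you want on $Y_0$; and in the discrepancy step, the implication $a_i - b_i \geq -1 \Rightarrow a_i - (1-\delta)b_i > -1$ only covers $b_i>0$, so for exceptional divisors $E_i$ with $b_i = 0$ you must separately record that $a_i \geq 1 > -1$ because $X$ is smooth.
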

\begin{proof}
	The inequality in the statement is still valid if we tweak it a little bit, say as follows
	\[
	(1-\delta)\mult_x(D)+(1-c)(\epsilon(B,x)-\delta) > \textup{dim}(X) \ ,
	\]
	for some $0<\delta\ll 1$. Take $m\gg 1$ and divisible enough. Then the linear system
	\[
	\{D_m\in |mB| \ | \ \mult_x(D_m)\geq m(\epsilon(B,x)-\delta)\}
	\]
	is base point free on $X\setminus\{x\}$. Setting $D'=\frac{1-c}{m}D_m$, then we have 
	\[
	\sJ(X,(1-\delta)D+D')|_{X\setminus \{x\}} \ =  \ \sJ(X,(1-\delta)D)|_{X\setminus\{x\}} \ = \sO_{X\setminus\{x\}},
	\]
	for any $0<\delta\ll 1$, by \cite[Example 9.2.29]{PAG} and the fact that $D$ is log-canonical. So, the ideal $\sJ(X,(1-\delta)D+D')$ is non-trivial at most at $x$. But the first inequality in the proof yields
	\[
	\mult_x((1-\delta)D+D') > \textup{dim}(X), \textup{ for any } 0<\delta \ll 1 \ .
	\]
So, applying \cite[Proposition 9.3.2]{PAG} we get $\textup{LC}((1-\delta)D+D')=\{x\}$.	
\end{proof}

\section{Convex geometry and the infinitesimal picture of threefolds}

In this section we introduce infinitesimal Newton--Okounkov bodies on threefolds and study how base loci affect their shape. Finally, we discuss infinitesimal data on singular surfaces.

In the following $X$ is a complex projective smooth threefold and $\pi:X'\rightarrow X$ the blow-up of $X$ at a point $x\in X$ with the exceptional divisor $E\simeq \PP^2$. Let $\mu\deq \mu(B,x)$ be the infinitesimal width and $\epsilon\deq\epsilon(B,x)$ the Seshadri constant of an ample $\QQ$-class $B$ at $x$.
\subsection{Infinitesimal Newton--Okounkov bodies}
We refrain from a detailed exposition on Newton--Okounkov bodies. Instead, we refer to the original sources \cite{KK12, LM09}, or the survey articles \cite{B, KL_Survey}. Their connection to local positivity of line bundles was tackled in \cite{CHPW18, KL15,KL17,KL18,Ro16}. 

We start with a short overview of the infinitesimal picture. So, fix an \textit{infinitesimal flag} 
\[
Y_{\bullet} \ : \ Y_0=X' \ \supseteq \ Y_1=E \ \supseteq \ Y_2=l \ \supseteq \ Y_3=\{Q\} \ ,
\]
where $l\subseteq E\simeq \PP^2$ is a line and $Q\in l$ is a point. We say that the flag $Y_{\bullet}$ is \textit{generic} if $l\subseteq \PP^2$ is a generic line and $Q\in l$ a generic point.

The \textit{infinitesimal Newton--Okounkov body} of $B$ encodes how all the effective divisors in its class vanish along the tangency directions given by the flag. More specifically, it is defined as
\[
\tilde{\Delta}_{Y_{\bullet}}(B)\ = \ \textup{convex hull}\{\nu_{Y_{\bullet}}(D)\ | \ \textup{ effective } D\equiv B\} \ \subseteq \ \RR^3 \ .
\] 
We define $\nu_{Y_{\bullet}}(D)=(\nu_1,\nu_2,\nu_3)$ as follows. Set $\nu_1=\mult_x(D)$. Then $E$ is not contained in the support of $\pi^*(D)-\nu_1E$ and its restriction $D_1$ to $E$ makes sense as an effective divisor. Set finally $\nu_2=\mult_l(D_1)$ and $\nu_3= \mult_Q\big(\big(D_1-\nu_2l\big)|_l\big)$. 

In the following \textit{the inverted simplex} of length $\sigma$ is the convex set defined to be
\[
\Delta^{-1}_{\sigma}\deq\{(t,u,v)\in\RR^3_+\ | \ 0\leq t\leq \sigma, 0\leq u+v\leq t\} \ \subseteq \ \RR^3_+\ ,
\]
With this in hand we collect the most usefull properties of infinitesimal Newton--Okounkov bodies.
\begin{theorem}\label{thm:propertiesnobody}{(\textbf{Properties of infinitesimal Newton--Okounkov bodies}).}
		\begin{enumerate}
		\item The Euclidean volume $\textup{vol}_{\RR^3}\big(\tilde{\Delta}_{Y_{\bullet}}(B)\big)\ = \ B^3/6$.
		\item $\tilde{\Delta}_{Y_{\bullet}}(B) \subseteq \Delta^{-1}_{\mu}$ and $\tilde{\Delta}_{Y_{\bullet}}(B)\cap \{\mu\}\times \RR^2\neq\varnothing$, where $\mu= \mu(B,x)$.
		\item $\epsilon(B,x)=\textup{max}\{\sigma| \Delta^{-1}_{\sigma}\subseteq \tilde{\Delta}_{Y_{\bullet}}(B) \}$, so the right side doesn't depend on the choice of $Y_{\bullet}$. 
		\item If the flag $Y_{\bullet}$ is generic, then $[0,\mu]\times (0,0)\subseteq \tilde{\Delta}_{Y_{\bullet}}(B)$.
	\end{enumerate}
	
\end{theorem}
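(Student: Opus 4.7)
The plan is to treat the four assertions in sequence, each reducing to a standard technique in Newton--Okounkov body theory, modulo extra care required by the infinitesimal setup on a threefold. The fundamental observation is that $\tilde{\Delta}_{Y_{\bullet}}(B)$ coincides with the classical Okounkov body of the nef and big class $\pi^{*}B$ on $X'$ with respect to the flag $Y_{\bullet}$, so that the general machinery of Lazarsfeld--Mustata and Kaveh--Khovanskii applies directly, and I will import it freely.

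For (1), the Euclidean volume equals $\textup{vol}(\pi^{*}B)/3!$ by the standard volume formula for Okounkov bodies, and since $\pi$ is birational one has $\textup{vol}(\pi^{*}B) = (B^{3})$. For (2), I would observe that $\nu_{1} = \textup{ord}_{E}(\pi^{*}D) = \mult_{x}(D) \leq \mu$ by the very definition of the infinitesimal width, with equality attained in the limit as $D$ approaches the pseudo-effective boundary; this both gives the inclusion $\nu_1 \leq \mu$ and the nonempty intersection with $\{\mu\}\times \RR^{2}$. Using the identification $\sO_{X'}(-E)|_{E} \simeq \sO_{\PP^{2}}(1)$, the effective divisor $D_{1} = (\pi^{*}D - \nu_{1}E)|_{E}$ lies in $|\sO_{\PP^{2}}(\nu_{1})|$, and the multiplicity of $D_1$ along $l$ plus the multiplicity of the residual restriction $(D_1 - \nu_2 l)|_l$ at $Q$ is at most $\nu_{1}$. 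This yields $\nu_{2}+\nu_{3}\leq \nu_{1}\leq \mu$ and hence the inclusion in $\Delta^{-1}_{\mu}$.

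For (3), I would prove the equivalence $\Delta^{-1}_{\sigma}\subseteq\tilde{\Delta}_{Y_{\bullet}}(B)$ if and only if $\pi^{*}B - \sigma E$ is nef, i.e.\ $\sigma \leq \epsilon(B;x)$. The forward direction follows by combining the intersection-theoretic interpretation of slices of the Okounkov body with testing $\pi^{*}B - \sigma E$ against strict transforms of curves through $x$. The converse uses that once $B_{\sigma}$ is ample, pulling back and differentiating sections of $mB_{\sigma}$ to prescribed orders along $E$, $l$, and $Q$ produces valuations filling out every rational point of $\Delta^{-1}_{\sigma}$ in the limit $m\to\infty$. Independence on the flag follows for free, since $\epsilon(B;x)$ is a numerical invariant.

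Part (4) carries the main technical burden, and this is where I expect the principal obstacle to lie. For each rational $t\in[0,\mu)$ one chooses $D_{t}\equiv B$ with $\mult_{x}(D_{t}) = t$, produced by interpolation between an ample representative of $B$ and a divisor attaining multiplicity arbitrarily close to $\mu$ at $x$. The goal is to argue that for a Zariski-generic line $l\subseteq E$ and generic point $Q\in l$, neither $l$ is contained in $\textup{Supp}(D_1)$ nor $Q$ in the support of the residual restriction, giving $\nu_{2}=\nu_{3}=0$ and producing the point $(t,0,0)$. The delicate step, which I view as the crux, is that the generic flag must be chosen \emph{uniformly} in $t$ and independently of the particular representative $D_t$; I would handle this by noting that for each rational $t$ the bad locus of flags is a proper subvariety of the incidence variety of pairs $(l,Q)$, the countable union over rational $t$ remains a meagre subset of this variety, and closure of $\tilde{\Delta}_{Y_{\bullet}}(B)$ together with density of the rationals then extends the conclusion to the full segment $[0,\mu]\times(0,0)$.
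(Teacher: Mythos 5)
Your parts (1) and (2) are fine and essentially reproduce what the references prove: the identification of $\tilde{\Delta}_{Y_\bullet}(B)$ with the Newton--Okounkov body of $\pi^*B$ on $X'$ gives (1) via the Lazarsfeld--Musta\c{t}\u{a} volume formula, and the chain $\nu_1 = \mult_x(D) \leq \mu$ together with the B\'ezout-type bound $\nu_2+\nu_3 \leq \nu_1$ on $E\simeq\PP^2$, plus closure, gives (2). The paper simply cites \cite{LM09} and \cite{KL17} for these; your direct computation is a reasonable expansion.

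For (3), however, there is a genuine gap. The "pulling back and differentiating sections of $mB_\sigma$" technique you invoke for the direction $B_\sigma$ nef $\Rightarrow \Delta^{-1}_\sigma\subseteq\tilde{\Delta}_{Y_\bullet}(B)$ is not available here: Theorem~\ref{thm:propertiesnobody} is stated for an arbitrary smooth projective threefold $X$, while differentiation of sections in the sense of Ein--Lazarsfeld--Nakamaye requires a trivial (or well-controlled) tangent bundle and is used in this paper only later, specifically in the abelian setting (Section~4). The correct route for this direction is the slicing theorem of \cite{LM09}: for $t<\epsilon(B;x)$ the class $B_t$ is ample, hence $E\not\subseteq\Bplus(B_t)$, and the slice of the Okounkov body at $\nu_1=t$ equals the two-dimensional Okounkov body of $(B_t)|_E\simeq\sO_{\PP^2}(t)$ with respect to the flag $(l,Q)$, which is the full triangle of side $t$. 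Letting $t\to\epsilon$ and closing up yields the inclusion. Your "forward" direction (simplex inclusion $\Rightarrow$ nefness) is also substantive and the one-sentence sketch about testing against strict transforms of curves does not constitute an argument; this is precisely the content of Theorem~C of \cite{KL17}, which the paper cites rather than reproves, and it requires that the inclusion be understood for \emph{every} infinitesimal flag, not a single one.

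For (4), your interpolation argument (using $D_t=(1-s)D_0+sD_1$ so that $\overline{D}_t|_E = s\cdot \overline{D}_1|_E$ reduces the bad-flag locus for all $t<t'$ to that of a single divisor $D_1$, then taking a sequence $t'\to\mu$) is plausible, though you should be explicit that the flag must be \emph{very general} in the sense of avoiding a countable union of proper closed subvarieties of the flag variety, not merely Zariski-generic; this is what the paper's "generic" means in context. The paper instead deduces (4) from its Lemma~\ref{lem:baselocus} on stabilization of base loci together with Theorem~2.1 of \cite{KL15a}, which is conceptually cleaner in that it explains \emph{why} the bad locus in $E$ is always a proper curve (no irreducible component of $\Bplus(B_t)$ can be all of $E$ when $t<\mu$), rather than relying on a particular interpolating family. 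Your approach would reach the same conclusion, but you should verify that the choice of $D_1$ does not force $l$ or $Q$ into the support --- which is exactly what is guaranteed by $E\not\subseteq\Bplus(B_t)$ for $t<\mu$.
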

Statement $(1)$ is the main result of \cite{LM09}, $(2)$ and $(3)$ follow from \cite[Proposition 2.6 and Theorem C]{KL17}. Lemma~\ref{lem:baselocus} and \cite[Theorem 2.1]{KL15} yield the last statement.

Moving forward, Theorem~\ref{thm:propertiesnobody} fully describes $\tilde{\Delta}_{Y_{\bullet}}(B)$ in the region $[0,\epsilon]\times\RR^2$. Outside of it, big divisors with base loci appear and change the shape.
\begin{proposition}\label{prop:nobodies}{(\textbf{Conditions on infinitesimal Newton--Okounkov bodies}).}
	Let $x\in V \subseteq X$ be a subvariety with $m_V(t)=m_t>0$ for some $t\in (\epsilon, \mu)$. Fix $Y_{\bullet}$ a generic infinitesimal flag.
	\begin{enumerate}
		\item If $V$ is a curve, then
		\[
		\tilde{\Delta}_{Y_{\bullet}}(B) \ \bigcap \ \{t\}\times\RR^2 \ \subseteq \ \textup{convex hull}\{((t,0,0),(t,0,t),(t,t-m_t, m_t), (t,t-m_t,0)\} \ .
		\]
		\item If $V$ is a surface with $m=\mult_x(Y)\geq 1$, then 
		\[
		\tilde{\Delta}_{Y_{\bullet}}(B) \ \bigcap \ \{t\}\times\RR^2 \ \subseteq \ \textup{convex hull}\{((t,0,0),(t,0,t-m\cdot m_t),(t,t-m\cdot m_t, 0)\} \ .
		\]
	\end{enumerate}
\end{proposition}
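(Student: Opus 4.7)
The plan is to test the slice $\tilde{\Delta}_{Y_{\bullet}}(B) \cap \{t\}\times\RR^2$ against concrete divisors: for $k \gg 0$ sufficiently divisible (with $kt \in \ZZ$), any effective $D \in |kB|$ with $\mult_x(D) = kt$ yields a valuation point $(\nu_2/k, \nu_3/k)$ in the slice, and the slice is the closed convex hull of such points. Setting $\tilde{D} := \pi^*D - kt E$, we obtain an effective divisor on $X'$ whose restriction $\tilde{D}|_E$ to the exceptional $\PP^2$ has degree $kt$. The crucial input is Lemma~\ref{lem:multiplicity}: the assumption $\mult_{\overline{V}}(\| B_t \|) = m_t$ forces $\mult_{\overline{V}}(\tilde{D}) \geq k m_t$ for every such $\tilde{D}$.

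For case (2), $\overline{V}$ is a prime divisor on $X'$, and its restriction $\overline{V}|_E$ is the projectivized tangent cone of $V$ at $x$, a plane curve of degree $m = \mult_x V$ in $E\cong\PP^2$. Writing $\tilde{D} = k m_t \overline{V} + R$ with $R$ effective and $\overline{V} \nsubseteq \Supp R$, we get $\tilde{D}|_E = km_t \overline{V}|_E + R|_E$, and the degree accounting gives $\deg R|_E = kt - km_t\cdot m = k(t - m\cdot m_t)$. Genericity of the flag places $l$ outside the finitely many components of $\overline{V}|_E$ and places $Q$ outside the finite intersection $\overline{V}|_E \cap l$. Hence the contribution of $km_t \overline{V}|_E$ to both $\mult_l$ and $\mult_Q$ vanishes, so $\nu_2 = \mult_l(R|_E) \leq \deg R|_E$ and $\nu_3 \leq \deg(R|_E) - \nu_2$. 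This gives $\nu_2/k + \nu_3/k \leq t - m\cdot m_t$, precisely the triangle in the statement.

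For case (1), assume first that $V$ is smooth at $x$, so that $\overline{V}$ is a smooth curve on $X'$ meeting $E$ transversally at a single point $q$. The condition $\mult_{\overline{V}}(\tilde{D}) \geq km_t$ means $\tilde{D}$ lies in the $km_t$-th power of the ideal of $\overline{V}$ at a generic point; restricting to $E$, since $I_{\overline{V}}$ pushes down to $\mathfrak{m}_q$ locally at $q$, we obtain $\mult_q(\tilde{D}|_E) \geq km_t$ as a plane curve. Take the generic flag with $q\notin l$. In the decomposition $\tilde{D}|_E = \nu_2 l + R$ with $R$ containing no copy of $l$, we have $\mult_q(R) = \mult_q(\tilde{D}|_E)\geq km_t$ (since $q\notin l$), and $R$ is a plane curve of degree $kt - \nu_2$. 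But a plane curve's multiplicity at a point cannot exceed its degree, so $kt - \nu_2 \geq km_t$, i.e.\ $\nu_2/k \leq t - m_t$. Combined with the inverted-simplex bound $\nu_2/k + \nu_3/k \leq t$ from Theorem~\ref{thm:propertiesnobody}, this yields the claimed quadrilateral.

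The main obstacle is the case when $V$ is singular at $x$ in~(1): then $\overline{V}\cap E$ is a finite set of points $q_1,\ldots,q_r$ reflecting the branches of $V$ at $x$. The same local analysis nonetheless shows $\mult_{q_i}(\tilde{D}|_E) \geq km_t$ at each $q_i$, and choosing a generic $l$ that avoids all of them gives the same bound $\nu_2 \leq k(t - m_t)$ as before. A similar remark handles reducible or non-reduced tangent cones in case~(2). Finally, that the bounds on individual valuations propagate to the convex body itself is automatic since the slice is the closed convex hull of the points produced above and each of our estimates is linear in $(\nu_2,\nu_3)$.
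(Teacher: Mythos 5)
Your argument is correct and follows the same strategy as the paper's: in both cases one transfers the lower bound on $\mult_{\overline V}(\|B_t\|)$ to the divisor $\tilde D|_E$ on $E\cong\PP^2$ (to a point of $\overline V\cap E$ in case (1), to the plane curve $\overline V|_E$ of degree $m$ in case (2)) and then applies a B\'ezout-type degree bound against the generic flag $l\ni Q$. The only cosmetic difference is that the paper writes the decomposition in case (2) as $\overline D = D' + (m_t + x)\overline V$ with slack $x\geq 0$ to absorb possible excess multiplicity, whereas you write it with coefficient exactly $km_t$; this does not affect the estimate, since any extra copies of $\overline V$ landing in $R$ only lower the degree available to the valuations.
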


\begin{proof}
Note first that the vertical slices $\tilde{\Delta}_{Y_{\bullet}}(B)\cap\{t\}\times\RR^2$ change continuously with $t$. So, we can assume that all the base loci of $B_t$ agree, by Lemma~\ref{lem:baselocus}. Moreover, due to genericity $l$ and $Q$ are not contained in these loci. Finally, denote by $\overline{V}$ the proper transform of $V$ through the blow-up.

	$(1)$ Fix a point $P\in \overline{V}\cap E\neq \varnothing$, which automatically is distinct from $Q$ and not contained in $l$. The definition of multiplicity yields $\mult_{P}(||B_t|| )\geq m_t$. Now, let  $D\equiv B$ be an effective $\QQ$-divisor with $\mult_x(D)=t$, i.e. its proper transform $\overline{D}\equiv B_t$.  Let $\nu_{Y_{\bullet}}(D) = (t,a,b)$ for some $a,b\in \QQ$. Lemma~\ref{lem:multiplicity} yields $\textup{mult}_P(\overline{D})\geq m_t$ and restricting to $E$ forces $\textup{mult}_P(\overline{D}|_E)\geq m_t$. As $P\notin l$, then $a\leq t-m_t$.
	
Once we have this inequality our statement follows from definitions. So, assume it doesn't hold, i.e. $a>t-m_t$. Then $R\deq \overline{D}|_E$ is an effective  $\QQ$-divisor of degree $t$ on $\PP^2$. By the definition of $\nu_{Y_{\bullet}}$, we can write $R = al + R'$, where $l\nsubseteq \textup{Supp}(R')$. Since $P\notin l$, the properties of $R$  yield 
	\[
	\mult_P(R') \ \geq \ m_t \textup{ and } \textup{deg}(R')\ = \ t-a \ < \ m_t \ .
	\]
We get our contradiction, as B\'ezout's theorem implies that there is no such $R'$ in $\PP^2$.	
	
	$(2)$ As before, let $D\equiv B$ be an effective $\QQ$-divisor with $\nu_{Y_{\bullet}}(D) = (t,a,b)$, with $a,b\in \QQ$. By Lemma~\ref{lem:multiplicity}, its proper transform can be written as $\overline{D}= D'+(m_t+x)\overline {V}$, where $\overline{V}\nsubseteq\textup{Supp}(D')$ for some $x\geq 0$. Scheme-theoretically $C=\overline{V}\cap E$ is a degree $m$ curve in $E\simeq \PP^2$, so $\mult_{C}(\overline{D}|_E) \geq m_t+x$.
	
As $Y_{\bullet}$ is generic, $l$ and $P$ are not in $C$. So, if $R'=D'|_E$, the inequality and B\'ezout's theorem yield
	\[
	a+b \ \leq \ \textup{deg}(R') \ \leq \ t-m(m_t+x)\ .
	\] 
Applying the definition of Newton-Okounkov bodies, this leads to a proof.
\end{proof}

\subsection{Infinitesimal data on singular surfaces}

Later it becomes important to make use of the infinitesimal picture for singular spaces for induction arguments. We explain this for surfaces. 

Let $S$ be an irreducible projective surface, $M$ an ample line bundle on $S$ and $x\in S$ a smooth point. Let $\pi :S'\rightarrow S$ be the blow-up at $x$ with $E\simeq \PP^1$ the exceptional divisor, which is Cartier. Note that the definitions of $\epsilon(M,x)$ and $\mu(M,x)$ make sense. So, let's consider the fiber-product diagram:
\[ \begin{tikzcd}
\overline{S}' \arrow{r}{\overline{\pi}} \arrow[swap]{d}{f'} & \overline{S} \arrow{d}{f} \\%
S' \arrow{r}{\pi}& S
\end{tikzcd}
\]
where $f:\overline{S}\rightarrow S$ is a resolution of singularities of $S$ that is an isomorphism in a neighborhood of $x$. Note that $\overline{\pi}:\overline{S}'\rightarrow \overline{S}$ is the blow-up of the same point $x\in\overline{S}$.
\begin{lemma}\label{lem:infinitesimalwidth}
	Under the assumptions above, $\mu(M,x)=\mu(f^*(M),x)$ and $\epsilon(M,x)=\epsilon(f^*(M),x)$.
\end{lemma}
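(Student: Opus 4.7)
My plan is to use the commutativity of the fibred product diagram to transfer both invariants across the birational morphism $f'$. Because $f$ is an isomorphism in a neighborhood of the smooth point $x$, the induced morphism $f'\colon \overline{S}' \to S'$ is likewise an isomorphism in a neighborhood of the exceptional divisors $E$ and $E'$, and one has
\[
(f')^*E \ = \ E' \ , \quad (f')_*E' \ = \ E \ , \quad (f')^*\pi^*A \ = \ \overline{\pi}^*f^*A \ .
\]
Moreover $E$ is Cartier on $S'$ (because $x$ is smooth on $S$) and $\pi^*A$ is Cartier on $S'$, so both $\pi^*A - tE$ and $\overline{\pi}^*f^*A - tE'$ are $\QQ$-Cartier classes and the projection formula for $f'$ applies cleanly.

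For the equality $\epsilon(A;x) = \epsilon(f^*A;x)$ I would use the curve characterization $\epsilon(A;x) = \inf_{x \in C}(A\cdot C)/\mult_x C$, valid at the smooth point $x$ whether or not $S$ is globally smooth. Strict transform under $f$ provides a bijection between irreducible curves on $S$ through $x$ and irreducible curves on $\overline{S}$ through $x$: any curve contracted by $f$ avoids the isomorphism locus of $f$, hence cannot pass through $x$. For corresponding pairs $(C, \widetilde{C})$, the projection formula yields $(A\cdot C) = (f^*A \cdot \widetilde{C})$, while $\mult_x C = \mult_x \widetilde{C}$ since $f$ is an isomorphism near $x$. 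The two infima therefore coincide.

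For the equality $\mu(A;x) = \mu(f^*A;x)$ I would correspond effective representatives of the $\QQ$-Cartier classes $\pi^*A - tE$ and $\overline{\pi}^*f^*A - tE'$ via pullback and pushforward by $f'$. Given an effective $\QQ$-Cartier divisor $D \equiv \pi^*A - tE$ on $S'$, the pullback $(f')^*D$ is effective (since $f'$ is surjective) and is numerically equivalent to $\overline{\pi}^*f^*A - tE'$, giving $\mu(A;x) \leq \mu(f^*A;x)$. Conversely, given an effective $\QQ$-divisor $D' \equiv \overline{\pi}^*f^*A - tE'$ on $\overline{S}'$, the pushforward $(f')_*D'$ is effective on $S'$, and the identities above combined with $(f')_*(f')^*\pi^*A = \pi^*A$ give $(f')_*D' \equiv \pi^*A - tE$, yielding $\mu(A;x) \geq \mu(f^*A;x)$.

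The main subtlety is that $S'$ may inherit the singularities of $S$ away from $E$, so one must be careful with arbitrary Weil divisors; however, all divisorial operations involved take place through $\QQ$-Cartier classes and through the birational morphism $f'$, which is an isomorphism exactly where the geometry of the two invariants is localized. This makes the projection-formula identifications above unambiguous and completes the argument.
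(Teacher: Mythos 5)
Your treatment of the Seshadri constant equality is fine and essentially matches the paper's reasoning: the curve-characterization, the strict-transform bijection near the smooth point $x$, and the projection formula all work cleanly. The pullback direction of the $\mu$-equality, namely $\mu(A;x) \leq \mu(f^*A;x)$, is also correct --- pulling back an effective $\QQ$-Cartier representative of $\pi^*A - tE$ along $f'$ produces an effective $\QQ$-Cartier divisor in the class $\overline{\pi}^*f^*A - tE'$, and pullback preserves numerical equivalence.

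The gap is in the converse direction. You take $D' \equiv \overline{\pi}^*f^*A - tE'$ effective on the \emph{smooth} surface $\overline{S}'$ and push it forward. But $S'$ is only smooth in a neighborhood of $E$; away from $E$ it inherits whatever singularities $S$ has, and $S$ is assumed merely irreducible, not normal or $\QQ$-factorial. The pushforward $(f')_*D'$ is therefore an effective Weil divisor on $S'$ which need not be $\QQ$-Cartier, so it does not a priori define a class in $N^1(S')_\QQ$ and the claimed identification $(f')_*D' \equiv \pi^*A - tE$ is not a legitimate step; nor does $(f')_*(f')^* = \mathrm{id}$ on $N^1$ control the components of $D'$ supported on the $f'$-exceptional locus. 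Your closing remark that the operations are ``localized'' where $f'$ is an isomorphism is precisely what fails here: $\mu$ depends on the full support of $D'$, which can live over the singular part of $S'$.

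The paper avoids this entirely. Its proof of the $\mu$-equality goes through infinitesimal Newton--Okounkov bodies: the easy (pullback) direction gives the inclusion $\Delta_{Y_\bullet}(\pi^*A) \subseteq \Delta_{Y_\bullet}(\overline{\pi}^*f^*A)$, and then the volume identity of Lazarsfeld--Musta\c t\u a, combined with $(A^2) = (f^*A)^2$ via the projection formula, forces the two convex bodies to coincide; the horizontal widths then agree, and a separate short argument identifies these widths with the respective $\mu$'s. That volume comparison is the mechanism that replaces the missing pushforward, and your proposal needs some analogue of it (or an additional hypothesis such as $S$ being $\QQ$-factorial) to close the second inequality.
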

\begin{proof}
	The equality between the Seshadri constants follows from the fact that in their definitions it does not matter whether the class $M_t\deq \pi^*(M)-tE$ is ample or nef.
	
	Now, let $Y_{\bullet}: E\ni z$ be a flag on $S'$, and denote the same way as a flag on $\overline{S}'$. We want to prove that 
	\[
\Delta_{Y_{\bullet}}\big(\pi^*(M)\big) \ = \ \Delta_{Y_{\bullet}}\big((f'\circ\pi)^*(M)\big)
	\]
	as sets in $\RR^2$, whose existence make sense since all the elements in the flag are smooth at $z$.
	
Effectivity for Cartier divisor is invariant under pull-back and since $f'$ is an isomorphism in a neighborhood of $E$, the direct inclusion follows. By \cite[Theorem 2.3]{LM09} both of these sets have the same area equal to $(M^2)=(f^*(M)^2)$. So, these two bodies must coincide.
	
	It remains to show that the horizontal width of each convex set is the infinitesimal width of the corresponding line bundle. For $f^*(M)$ it is clear, as $\overline{S}$ is smooth. For $A$, we have
	\[
	\big(D\cdot E\big) \ = \ \big(M_t\cdot E\big) \ = \ \big((\pi^*(M)-tE)\cdot E\big) \ = \ t \ > \ 0  \ 
	\] 
	for an effective $\QQ$-Cartier divisor $D\equiv M_t$. But the formula in \cite[Theorem 2.8, Chapter VI]{Ko96} yields $\textup{Supp}(D)\cap E\neq \varnothing$. So, let $F\deq \pi_*(D)$, defined as the appropriate linear combination of the images of the irreducible components of $F$. Then $F\equiv M$, as $D$ is and $x\in S$ is a smooth point. Furthermore, $D=\pi^*(F)-tE$ as an equality of effective Cartier divisors. So, $\mu(M,x)$ the infinitesimal width can be described only by those divisors coming from downstairs and this ends the proof.
\end{proof}

\section{Infinitesimal picture of abelian threefolds}
We study the infinitesimal picture through Newton-Okounkov bodies of an abelian threefold $A$ at the origin $0\in A$ for an ample $\QQ$-class $B$ on $A$. There are a few things that work better in our setup. First, the origin behaves for positivity issues like a very general point. Second, the Seshadri constant and the infinitesimal width don't depend on the base point, due to the group structure. Third, the tangent bundle of $A$ is trivial, so we can use differentiating techniques to positivity. Finally, Debarre \cite{D94} shows that any curve of small degree with respect to $B$ is degenerate, if $B^3$ is big.

\subsection{Small remarks on abelian threefolds.}

Most of the times we don't know whether our log-canonical centers pass through the origin. But, when they are two-dimensional case we get lucky.
\begin{lemma}\label{lem:nakayama}
	Suppose for some $t\in(0,\mu)$ there is a surface $\overline{S}\subseteq \Bplus(B_t)$. Then $0\in S\deq \pi(\overline{S})$ and $S$ is either an abelian surface or $\mult_0(S)\geq 2$.
\end{lemma}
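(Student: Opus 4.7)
I first reduce to the case $\overline{S}\neq E$, since otherwise $S=\pi(E)=\{0\}$ is not a surface and the statement is vacuous. Thus $\overline{S}$ is the strict transform of a surface $S\subseteq X$, and either $0\in S$ (so $\overline{S}$ meets $E$ along the projectivized tangent cone of $S$ at $0$) or $0\notin S$ (so $\overline{S}\cap E=\varnothing$ and $\pi|_{\overline{S}}$ is an isomorphism).

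\textbf{Part 1: proving $0\in S$.} I argue by contradiction: suppose $\overline{S}\cap E=\varnothing$. Then $B_t|_{\overline{S}}=\pi^*(B)|_{\overline{S}}$ corresponds under $\pi|_{\overline{S}}$ to the ample class $B|_S$. By the Takayama / Ein--Lazarsfeld--Musta\c{t}\u{a}--Nakamaye--Popa characterization of the augmented base locus, the assumption $\overline{S}\subseteq \Bplus(B_t)$ is equivalent to $\mathrm{vol}_{X'|\overline{S}}(B_t)=0$, i.e.\ every section of $mB_t$ vanishes on $\overline{S}$. Pushing down, a section of $mB_t$ on $X'$ is a section of $mB\otimes \mathfrak{m}_0^{mt}$ on $X$, and the vanishing condition becomes that the restriction map $H^0(X, mB\otimes \mathfrak{m}_0^{mt})\to H^0(S, mB|_S)$ is zero for all $m$. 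I would contradict this by a direct Mayer--Vietoris-type argument: because $0\notin S$, the ideals $\mathfrak{m}_0^{mt}$ and $\mathcal{I}_S$ have disjoint co-supports, so the cohomological obstruction to lifting sections from $S$ is controlled by $H^1(X,mB\otimes \mathcal{I}_S)$ and $H^1(X',mB_t)$ separately. The first vanishes for $m\gg 0$ by Serre vanishing applied to the fixed coherent sheaf $\mathcal{I}_S$ twisted by the ample $B$, while on the abelian variety $X$ asymptotic cohomology (Kodaira vanishing plus the fact that $B$ is ample) makes the second grow sub-maximally. This forces non-zero sections to lift for some large $m$, contradicting $\overline{S}\subseteq \Bplus(B_t)$.

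\textbf{Part 2: proving $S$ abelian or $\mathrm{mult}_0(S)\geq 2$.} Assume $0\in S$ and $\mathrm{mult}_0(S)=1$. Under the birational morphism $\pi|_{\overline{S}}$ the surface $\overline{S}$ becomes the blow-up of $S$ at $0$, and $E\cap\overline{S}$ is the $\mathbb{P}^1$ of tangent directions. My plan is to exploit the differentiation-of-sections technique of \cite{ELN94}, available on the abelian threefold because the tangent bundle is trivial; this is the tool (via \cite{N96}) that, as the introduction explains, forces the multiplicity function $m_{\overline{S}}(t)=\sigma_{\overline{S}}(\|B_t\|)$ of any base-locus component to have slope at least one as $t$ grows beyond $t_{\overline{S}}$. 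I would then combine this slope-one lower bound with the shape restriction from Proposition~3.2(2) applied with $m=\mathrm{mult}_0(S)=1$: the slice $\widetilde{\Delta}_{Y_\bullet}(B)\cap \{t\}\times \RR^2$ is contained in a triangle of area $(t-m_{\overline{S}}(t))^2/2$ for $t>t_{\overline{S}}$. Integrating the vertical-slice bound against the Euclidean volume identity $\mathrm{vol}(\widetilde{\Delta}_{Y_\bullet}(B))=B^3/6$ from Theorem~3.2(1), combined with translation invariance of the whole infinitesimal picture on the abelian $X$, produces a rigidity: the stabilizer of $S$ in $X$ (as a closed subgroup) must be positive dimensional and equal to $S$ itself through $0$. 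Since $0\in S$ and $S$ is irreducible, this forces $S$ to be an abelian subvariety of $X$.

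\textbf{Main obstacle.} The cohomological core of Part~1 is routine once one spells out the restricted-volume characterization; the real difficulty is Part~2. The hard step is converting the base-locus membership $\overline{S}\subseteq \Bplus(B_t)$ into the geometric conclusion that $S$ is translation-invariant. This requires the differentiation-of-sections input (which gives the slope-one bound) to be combined precisely with the convex-geometric slice estimates and with the translation symmetries of the abelian structure, and ruling out non-abelian surfaces smooth at $0$ is exactly where these ingredients meet.
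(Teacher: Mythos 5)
Your two parts follow genuinely different routes from the paper, and Part~2 has a real gap.

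\textbf{Part 1.} The paper does not use a cohomological lifting argument. It uses a direct translation argument via the group law: if $\overline{S}\cap E=\varnothing$, pick a general $x\in X$ so that $0\notin S_x\deq S-x$ and $x\notin\pi(\Bstable(B_t))$. Given any effective $\overline{D}\equiv B_t$, write $\overline{D}=F+s\overline{S}$ with $\overline{S}\nsubseteq\Supp(F)$; then $\overline{D}'=F+s\pi^*(S_x)$ is again effective and numerically equivalent to $B_t$ (since $S_x\equiv S$ and, $0\notin S_x$ implying $\pi^*(S_x)$ is already the proper transform), yet $\overline{S}\nsubseteq\Supp(\overline{D}')$. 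This contradicts $\overline{S}\subseteq\Bstable(B_t)$. Your Mayer--Vietoris/Serre-vanishing route might in principle be pushed through, but you should be careful: the twisting ideal $\m_0^{mt}$ grows with $m$, so you cannot quote Serre vanishing for a fixed coherent sheaf; you would instead need a genuinely asymptotic estimate (sub-maximal growth of $h^1(mB\otimes\m_0^{mt}\cdot\sI_S)$), and you have not supplied it. The paper's translation argument sidesteps all of this.

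\textbf{Part 2.} Here there is a genuine gap. You invoke the slope-one bound on the multiplicity function (Proposition~\ref{prop:nakamaye}) and Proposition~\ref{prop:nobodies}(2) with $m=1$, and then assert that ``integrating the vertical-slice bound against $\vol=B^3/6$'' produces a rigidity forcing the stabilizer of $S$ to be positive-dimensional. No argument connects a volume inequality on $\tilde{\Delta}_{Y_\bullet}(B)$ to a statement about $\mathrm{Stab}(S)$; that inference is simply missing, and you acknowledge as much in your ``main obstacle'' paragraph. The paper's Part~2 is again a translation argument, and it is short. Assume $\mult_0(S)=1$, take any smooth point $x\in S$, and set $\overline{S}_x=\pi^*(S-x)-E$. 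Because $\mult_x(S)=1=\mult_0(S)$, the classes $\overline{S}_x$ and $\overline{S}$ agree on $X'$, so the same exchange-of-component argument as in Part~1 shows $\overline{S}_x\subseteq\Bplus(B_t)$ for every smooth $x\in S$. If $S$ is not abelian (and $0\in S$ by Part~1), then $S-S=X$ (Example~\ref{ex:sums}(3)), so the closure of $\bigcup_{x}\left(S-x\right)$ is $X$; hence $\Bplus(B_t)=X$, contradicting bigness of $B_t$ for $t<\mu$. Thus $S$ must be abelian. The differentiation and Newton--Okounkov machinery you call on are used elsewhere in the paper, but they are neither needed nor, as stated, sufficient for this lemma.
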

\begin{proof}
	Suppose $\overline{S}\cap E=\varnothing$ and tweaking $t$ slightly assume by  Lemma~\ref{lem:baselocus} that all the base loci of the class $B_t$ agree. Then an effective divisor $\overline{D}\equiv B_t$ can be written $\overline{D}  = F +s\overline{S}$, where $\overline{S}\nsubseteq \textup{Supp}(F)$ and some $s>0$. As $0\notin S$,  $x\notin \pi\big(\Bstable(B_t)\big)$ and $0\notin S_x\deq S-x$ for a general point $x\in A$. So,
	\[
	\overline{D}'\ \deq \ F+s\overline{S}_x \ \equiv \ B_t \ \textup{ and  } \overline{S}\ \nsubseteq \ \textup{Supp}(\overline{D}') \ .
	\]
Due to the equality of our base loci this leads to a contradiction as $\overline{S} \nsubseteq\Bstable(B_t)$.
	
Assume now that $\mult_0(S)=1$. Let $x\in S$ be a smooth point and $\overline{S}_x=\pi^*(S_x)-E$ the proper transform of $S_x$. Note that both $\overline{S}_x$ and $\overline{S}$ lie in the same numerical class on $\overline{A}$. So, as above if $\overline{S}\subseteq \Bplus(B_t)$ then $\overline{S}_x\subseteq \Bplus(B_t)$ for $t<\mu$. If $S$ is not abelian, the closure of the union of all $S-x$, when $x\in S$ is smooth, is $A$, contradicting that $B_t$ is big, as we chose $t<\mu$. 
\end{proof}

A classical result of Debarre says that small degree curves on abelian threefolds are degenerate.  
\begin{proposition}\label{prop:debarre}
	Let  $B$ be an ample $\QQ$-divisor class on $A$  with $(B^3)>2$. If $C\subseteq X$ is a curve with $(B\cdot C)\leq 2$ then either $C$ is elliptic or $S\deq C+C\subseteq X$ is an abelian surface with $(B^2\cdot S)\leq 2$.
\end{proposition}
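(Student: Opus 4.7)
The approach is to analyze the smallest abelian subvariety $Z\subseteq X$ such that a translate of $C$ is contained in $Z$; after translating we may assume $C\subseteq Z$ and then argue by cases on $\dim Z\in\{1,2,3\}$. If $\dim Z=1$, then $Z$ is itself an elliptic curve and $C=Z$, so $C$ is elliptic and there is nothing further to show. If $\dim Z=3$, then $C$ is non-degenerate in $X$, and here I would invoke Debarre's lower bound from \cite{D94} for the degree of a non-degenerate curve on an abelian threefold, which takes the form $(B\cdot C)^3\geq \tfrac{9}{2}(B^3)$ (equivalently $(B\cdot C)\geq 3\sqrt[3]{(B^3)/6}$). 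The hypothesis $(B^3)>2$ then yields $(B\cdot C)>\sqrt[3]{9}>2$, contradicting $(B\cdot C)\leq 2$.

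In the remaining case $\dim Z=2$, set $S:=Z$. Then $S$ is an abelian surface, $C\subseteq S$, and by minimality of $Z$ the curve $C$ generates $S$ as an algebraic group. The first task is to identify $C+C$ with $S$: the image of the addition morphism $C\times C\to X$ is irreducible and stable under translation by the subgroup $\langle C-C\rangle$; this subgroup coincides with $S$ since $C$ generates $S$, and because $C$ is not elliptic the image is two-dimensional, hence equal to $S$.

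To bound $(B^2\cdot S)$, I would apply the Hodge index theorem on the abelian surface $S$ to the ample class $B|_S$ and the effective class $C$. Since $C$ generates $S$, it is not (a translate of) a fiber of an elliptic fibration, so $(C^2)_S>0$; adjunction on $S$ (whose canonical class is trivial) gives $(C^2)_S=2p_a(C)-2$, and because abelian varieties carry no rational curves and $C$ is not elliptic, $p_a(C)\geq 2$, so $(C^2)_S\geq 2$. The Hodge index theorem then yields
\[
(B\cdot C)^2 \ = \ (B|_S\cdot C)_S^2 \ \geq \ (B|_S)^2\cdot (C^2)_S \ \geq \ 2\,(B^2\cdot S),
\]
and hence $(B^2\cdot S)\leq (B\cdot C)^2/2\leq 2$. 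The main subtlety will be pinning down the correct form of Debarre's inequality so that the hypothesis $(B^3)>2$ converts cleanly into the strict bound $(B\cdot C)>2$ in the non-degenerate case; once that is done, the $\dim Z=2$ case reduces to an elementary Hodge-index computation after verifying $C+C=S$ and $(C^2)_S\geq 2$.
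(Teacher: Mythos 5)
Your proof is correct and follows essentially the same route as the paper: Debarre's degree bound from \cite{D94} rules out the non-degenerate case (your cube form $(B\cdot C)^3\geq \tfrac{9}{2}(B^3)$ is exactly $(B\cdot C)\geq 3\sqrt[3]{B^3/6}$), and the Hodge index theorem together with $(C^2)_S\geq 2$ (from $p_a(C)\geq 2$, as abelian surfaces contain no rational curves) gives the bound $(B^2\cdot S)\leq 2$. The only presentational difference is that you organize the cases via the minimal abelian subvariety generated by $C$ and spend a few extra lines confirming $C+C=S$ and $(C^2)_S\geq 2$, points the paper states more tersely.
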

\begin{proof}
	Debarre shows in  \cite{D94} that whenever $C+C+C=X$, we have  
	\[
	(B\cdot C ) \ \geq \ 3\cdot \sqrt[3]{B^3/6} \ .
	\] 
	This will not happen if  $(B\cdot C)\leq 2$ and $B^3>2$, therefore $C$ must be degenerate. 
	
	If $C$ is elliptic, we are done. So, let $S= C+C$ be abelian. Restricting to $S$ note $(C^2)\geq 2$, as $C\subseteq S$ is not elliptic. Then Hodge index implies $(B^2\cdot S) \cdot  (C^2) \leq (B\cdot C)^2$ and so $(B^2\cdot S)\leq 2$.
\end{proof}

\subsection{Differentials and infinitesimal base loci on abelian three-folds.}

Our goal is to study the behavior of the base loci and the multiplicity function along irreducible components on the blow-up of the origin. This part is mainly based on ideas \cite{ELN94}, \cite{EKL95}, \cite{N96}, and \cite{N05}.

From Section~2, let $t_{\overline{Y}}$ be the starting point when $\overline{Y}$ appears in $\Bplus(B_t)$ and $m_{\overline{Y}}:[0,\mu]\rightarrow \RR_+$ be its multiplicity function, for a subvartiety $\overline{Y}\subseteq \overline{A}$. When $Y\subset A$, then $t_Y$ and $m_Y$ are data computed on the proper transform $\overline{Y}$ of $Y$ on $\overline{A}$.

Nakamaye showed that for any subvariety $Y\subseteq X$ the function $m_Y$ has slope at least one, see \cite{N05}*{Lemma~1.3} for the general case or \cite{N96}*{Lemma~3.4} for abelian manifolds. We show that the same phenomenon happens for any subvariety $\overline{Y}\subseteq X'$ not only those coming from downstairs: 
\begin{proposition}\label{prop:nakamaye}{(\textbf{Differentiation techniques}).}
	With the above notation let $t_1\in [t_{\overline{Y}},\mu)$. Then
	\[
	m_{\overline{Y}}(t)\ -\ m_{\overline{Y}}(t_1) \ \geq \ t-t_1\ \ \forall t\geq t_1 \ .
	\]
\end{proposition}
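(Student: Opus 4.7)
The plan is to first reduce the slope-$\geq 1$ statement anchored at an arbitrary $t_1$ to the single estimate
\[
m_{\overline{Y}}(t) \;\geq\; t-t_{\overline{Y}}, \qquad t\geq t_{\overline{Y}},
\]
and then to establish this estimate by extending Nakamaye's differentiation argument from subvarieties of $X$ to arbitrary subvarieties of $X'$. For the reduction, note that $m_{\overline{Y}}$ is convex by Lemma~\ref{lem:convexity}, continuous, and vanishes at $t_{\overline{Y}}$; granted the displayed estimate, one has in particular $m_{\overline{Y}}(t_1)\geq t_1-t_{\overline{Y}}$, so the secant slope from $t_{\overline{Y}}$ to $t_1$ is at least one, and convexity then forces the slope from $t_1$ to any $t\geq t_1$ to be at least one as well. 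By Lemma~\ref{lem:multiplicity}, the displayed estimate is itself equivalent to the pointwise claim that every effective $\QQ$-divisor $\overline{D}\equiv B_t$ on $X'$ satisfies $\mult_{\overline{Y}}(\overline{D})\geq t-t_{\overline{Y}}$. I would then write $\overline{D}=\tD+(m-t)E$ with $D=\pi_\ast\overline{D}\equiv B$ on $X$ and $m=\mult_x D\geq t$, and split into two cases according to whether $\overline{Y}$ lies in $E$.

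When $\overline{Y}\not\subseteq E$, one has $\mult_{\overline{Y}}(\overline{D})=\mult_Y(D)$ with $Y=\pi(\overline{Y})\subseteq X$ of positive dimension, and I would invoke Nakamaye's differentiation trick directly on $X$. Take a section $s\in H^0(X,NB)$ with $\textup{div}(s)=ND$, and apply $Nk$ differentiations (with $k=\mult_Y D$) in translation-invariant directions transverse to $Y$; such directions exist because $T_X$ is trivial. Following \cite{N96}*{Lemma~3.4}, a careful choice of directions produces a nonzero $\partial^{(Nk)}s\in H^0(X,NB)$ with vanishing order $0$ along $Y$ and vanishing order at least $N(m-k)$ at $x$. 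Dividing by $N$ yields an effective divisor $D'\equiv B$ with $\mult_Y D'=0$ and $\mult_x D'\geq m-k$, so the associated divisor $\overline{D'}\equiv B_{m-k}$ witnesses $\overline{Y}\not\subseteq\Bstable(B_{m-k})$. Hence $m-k\leq t_{\overline{Y}}$, and so $\mult_{\overline{Y}}(\overline{D})=k\geq m-t_{\overline{Y}}\geq t-t_{\overline{Y}}$.

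When $\overline{Y}\subseteq E$ the identity becomes $\mult_{\overline{Y}}(\overline{D})=\mult_{\overline{Y}}(\tD)+(m-t)$, and the valuation $v_{\overline{Y}}$ on $K(X')=K(X)$ has center $x\in X$ and appears, in local coordinates at $x$ adapted to the tangent direction encoded by $\overline{Y}$, as a weighted-order valuation. The plan is to push the same differentiation argument through the blow-up: a local computation shows that every translation-invariant derivation reduces $v_{\overline{Y}}$ on sections by at most its maximal weight and reduces $\mult_x$ by at most $1$. Iterating until the transverse contribution to $v_{\overline{Y}}$ is exhausted, and combining with the $(m-t)$ contribution from the exceptional divisor, yields the required slope-$\geq 1$ bound. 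I expect this second case to be the main obstacle: controlling the interaction between the exceptional component of $\overline{D}$ and the weighted valuation $v_{\overline{Y}}$ under differentiation demands a more delicate coordinate analysis than the classical downstairs argument, and this is where the abelian hypothesis (triviality of $T_X$) is used most heavily.
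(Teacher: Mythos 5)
You have the right strategy: as in the paper, you extend the Ein--Lazarsfeld--Nakamaye differentiation technique to the blow-up $X'$ and separate the cases $\overline{Y}\not\subseteq E$ and $\overline{Y}\subseteq E$, with a convexity reduction to the anchor $t_1=t_{\overline{Y}}$ that the paper uses implicitly. But two steps in your sketch do not hold as stated.

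For $\overline{Y}\not\subseteq E$, the assertion that differentiating $s\in H^0(X,NB)$ along translation-invariant directions produces a nonzero $\partial^{(Nk)}s\in H^0(X,NB)$ is false: translation-invariant vector fields do not act canonically on sections of a nontrivial line bundle, and $\mathscr{D}^{k}_{NB}$ is not globally generated on its own. What is true (Lemma~2.5 of \cite{ELN94}, invoked in the paper's proof) is that $\mathscr{D}^{k}_{L}\otimes\sO_X((3n+3)A)$ is globally generated for any ample $A$, so the differentiated section lives in $H^0(X,NB+(3n+3)A)$; the extra twist is then removed by an asymptotic argument ($\partial^{\rho}(D)\equiv(1+\epsilon)B$ with $0<\epsilon\ll\rho$, then $\epsilon\to 0$). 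Without this twist-and-limit step your deduction $\overline{Y}\not\subseteq\Bstable(B_{m-k})$ is unjustified. For $\overline{Y}\subseteq E$ --- the genuinely new content, since the other case is essentially \cite{N96}*{Lemma~3.4} --- your weighted-valuation heuristic remains a plan, as you acknowledge. The paper's argument here is short and decisive: write the local equation of a divisor at $0$ as a Taylor expansion $P_s+P_{s+1}+\cdots$ with $s=\mult_0$; then $\overline{R}|_E=\Zeroes(P_s)\subseteq E\simeq\PP^2$, and for any differential operator $\partial^k$ of order $\leq k<s$ one has $\overline{\partial^k(R)}|_E=\partial^k(P_s)$, because such operators are polynomials in the $\partial/\partial u_i$. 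This turns the required multiplicity estimate along $\overline{Y}\subseteq E$ into an elementary statement about differentiating homogeneous polynomials in $\CC[u_1,u_2,u_3]$, with no need to analyze the interaction of the blow-up valuation with differentiation directly.
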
 
\begin{proof}
	We follow the path set in \cite{ELN94}. To explain, fix $M$ an ample line bundle on an abelian manifold $Z$ of dimension $n$. The sheaf of differential operators of order $k\geq 1$ with respect ot $M$, denoted by $\mathbf{\mathscr{D}}^k_M$, has two important properties. First, \cite[Lemma 2.5]{ELN94} for abelian manifolds yields that $\mathbf{\mathscr{D}}_{L}^k\otimes \sO_Z((3n+3)M)$ is globally generated for any line bundle $L$ on $Z$ and any $k\geq 1$. Second, for an effective divisor $R\in |L|$, there is a natural morphism of vector bundles $\mathbf{\mathscr{D}}_{L}^k\rightarrow\sO_Z(L)$, which locally takes any differential operator $D$ of order $\leq k$ to $D(f)$ where $f$ is the local function of $R$.
	
On our abelian three-fold $A$ let $M$ be an ample line bundle and $R\in |mM|$ an effective divisor for some $m\geq 1$. Then we can build a subspace of sections
	\begin{equation}\label{eq:differentiation}
		0 \ \neq \ V_{k,m} \ \subseteq \ H^0\big(A, \sO_A((m+12)M)\big) \ ,
	\end{equation}
where each section is obtained by differentiating $R$ with a differential operator of order at most $k$.
	
Taking an asymptotic view of $(\ref{eq:differentiation})$, then for any effective $\QQ$-divisor $D\equiv B$, any positive rational numbers $0<\epsilon\ll\rho$, there exists a new effective $\QQ$-divisor $\partial^{\rho}(D)=(1+\epsilon)B$ constructed by applying to $mD$, for some $m\gg0$ and divisible enough, a differential operator of order $\leq m\rho$, and then normalizing by $m$. Note that the choice of $\epsilon$ and $\rho$ are independent. So, fixing $\rho$ and taking $\epsilon\rightarrow 0$, we can consider $\partial^{\rho}(D)\equiv B$ for our computations.
	 
Multiplicity behaves linearly with respect to differentiation. So, if $0\neq Y\subseteq A$ is a subvariety, then
	\[
	\mult_Y(D) \ - \ \mult_Y(\partial^{\rho}(D)) \ \geq \ \rho
	\]
These ideas and basic properties of asymptotic multiplicity show the statement in this case.
	
	For an infinitesimal subvariety $\overline{Y}\subseteq E$ we need a more careful analysis. Let $\{u_1,u_2,u_3\}$ be a local system of parameters at the origin $0\in X$. Locally the effective Cartier divisor $R\geq 0$ is given
	\[
	P_s(u_1,u_2,u_3) \ + \ P_{s+1}(u_1,u_2,u_3) \ +\ \ldots 
	\]
	where each $P_i\in\CC[u_1,u_2,u_3]$ are homogeneous polynomials of degree $i$ and $s=\mult_0(R)$. If $\overline{R}$ is the proper transform of $R$ on the blow-up $\overline{A}$, then
	\[
	\overline{R}|_E \ =  \ \textup{Zeroes}(P_s) \ \subseteq \ E\simeq \PP^2 \ .
	\]
	In this local setup, any differential operator of order at most $k$ is a homogeneous polynomial of degree at most $k$ in the order one differential operator $\partial/\partial u_1, \partial/\partial u_2,\partial/\partial u_3$. So, then for any differential operator $\partial^k$ of order at most $k$ the proper transform $\overline{\partial^k(R)}|_E =\partial^k(P_s)$, if  $s>k$. In particular, all the above ideas yield the same inequality for the infinitesimal subvarieties.
\end{proof}

\subsection{Sums of subvarieties and infinitesimal base loci.}
Applying Proposition~\ref{prop:nakamaye} and the group structure on abelian manifolds, we can manage when new sums of subvarieties show up in $\Bplus(B_t)$. 

Let $Y_1,Y_2\subseteq A$ be two subvarieties, such that $t_{Y_1}\leq t_{Y_2}< \mu(B;0)$. We consider the addition map
\[
(y_1,y_2)\in Y_1\times Y_2\ \rightarrow \ y_1+y_2\in X \ .
\] 
The image, denoted by $Y_{12}\deq Y_1+Y_2\subseteq A$, will then be an irreducible subvariety of $A$. The following example studies a few cases of sums of subvarieties that will be helpful later.
\begin{example}[\textbf{Sums of subvarieties of abelian threefolds}]\label{ex:sums}
\begin{enumerate}
	\item Let $Y_1\subseteq A$ be a non-degenerate curve, i.e. $Y_1+Y_1+Y_1=A$. \cite[Example 2.6, Corollary 2.7, Chapter VIII]{D99} yield $Y_{12}\subseteq A$ is a surface and $Y_1+Y_1+Y_2=A$ for any curve $Y_2\subseteq A$. 
	\item If $Y_1,Y_2$ and $Y_{12}$ are curves, then there is an elliptic curve $F\subseteq A$ and $x_1,x_2\in A$ with $x_1+F=Y_1$ and $x_2+F=Y_2$. See this, by translating $Y_1,Y_2$ to pass through $0$ and use irreducibility of $Y_{12}$.
	\item Let $0\in Y\subseteq X$ be a non-abelian surface. As $Y\pm Y$ are irreducible, containing $Y$, then $Y\pm Y=X$.
	\end{enumerate}
\end{example}
Proposition \ref{prop:nakamaye} yields a criterion for the proper transform of $Y_{12}$ to show up in $\Bminus(B_t)$.
\begin{lemma}\label{lem:uppermu}
	With the above notation, suppose the origin $0\in Y_1$. Then
	\[
	t_{Y_{12}}\ \leq \ t_{12} \deq \ \textup{min}\{t>0\ | \ m_{Y_2}(t)\geq t_{Y_1} \}.
	\]
\end{lemma}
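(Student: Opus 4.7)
The plan is to show, for each $t > t_{12}$, that $Y_{12}\subseteq \Bminus(B_t)$, by producing an effective divisor $D\equiv_{\textup{num}} mB$ on $X$ whose support contains $Y_{12}$ with positive asymptotic multiplicity; letting $t\searrow t_{12}$ will then force $t_{Y_{12}}\leq t_{12}$. My two main tools will be Lemma~\ref{lem:multiplicity}---which both realises asymptotic multiplicity by genuine effective divisors \emph{and} imposes the lower bound $\mult_V(D')\geq \mult_V(\norm{D})$ on every effective $\QQ$-divisor $D'\equiv D$---together with the translation invariance of numerical classes on the abelian threefold $X$.

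First I would fix $t>t_{12}$ and, using the continuity of $m_{Y_2}$ from Lemma~\ref{lem:convexity} together with the definition of $t_{12}$, choose $\eta>0$ with $m_{Y_2}(t)\geq t_{Y_1}+2\eta$; since $t_{Y_1}+\eta>t_{Y_1}$ one has $Y_1\subseteq \Bminus(B_{t_{Y_1}+\eta})$ and hence $\xi:=m_{Y_1}(t_{Y_1}+\eta)>0$. For $m\gg 0$ divisible enough, Lemma~\ref{lem:multiplicity} will supply an effective divisor $\overline{D}\equiv_{\textup{num}} mB_t$ on $X'$ with $\mult_{\overline{Y_2}}(\overline{D})\geq m(t_{Y_1}+\eta)$. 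Descending via $\pi$ (and observing $\overline{Y_2}\not\subseteq E$) gives an effective divisor $D$ on $X$ numerically equivalent to $mB$ with $\mult_0(D)\geq mt$ and $\mult_{Y_2}(D)\geq m(t_{Y_1}+\eta)$.

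The translation step enters next: for each $y\in Y_2$ I set $D_y:=\tau_y^*D$, which is still numerically equivalent to $mB$. Upper-semicontinuity of multiplicity gives $\mult_0(D_y)=\mult_y(D)\geq \mult_{Y_2}(D)\geq m(t_{Y_1}+\eta)$, so the proper transform $\overline{D_y}$ on $X'$, augmented by a suitable effective multiple of $E$, yields an effective $\QQ$-divisor in the class $mB_{t_{Y_1}+\eta}$. Applying the infimum characterisation of asymptotic multiplicity from the proof of Lemma~\ref{lem:multiplicity} to this augmented divisor along $\overline{Y_1}$---and using $\overline{Y_1}\not\subseteq E$---I obtain $\mult_{\overline{Y_1}}(\overline{D_y})\geq m\xi$, which rewrites on $X$ as $\mult_{Y_1+y}(D)=\mult_{Y_1}(D_y)\geq m\xi$ for every $y\in Y_2$.

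Finally, since $Y_{12}=\bigcup_{y\in Y_2}(Y_1+y)$, a generic point of $Y_{12}$ is a generic point of some $Y_1+y$ with $y\in Y_2$, and there $D$ has multiplicity at least $m\xi$; hence $\mult_{Y_{12}}(D)\geq m\xi$. Dividing by $m$ and invoking Lemma~\ref{lem:multiplicity} once more gives $m_{Y_{12}}(t)\geq \xi>0$, and therefore $Y_{12}\subseteq \Bminus(B_t)$; letting $t\searrow t_{12}$ completes the argument. The hard part will be the descent inside the translation step: I must check that the multiplicity forced by the infimum characterisation genuinely sits on $\overline{Y_1}$ and is not absorbed into the exceptional divisor, which is precisely where the hypotheses $0\in Y_1$ and $t_{Y_1}\leq t_{Y_2}<\mu$ are used to guarantee $\overline{Y_1},\overline{Y_2}\not\subseteq E$.
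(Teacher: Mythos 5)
Your proof is correct and follows essentially the same route as the paper's: both arguments rest on the infimum characterization of asymptotic multiplicity from Lemma~\ref{lem:multiplicity} combined with translation invariance of numerical classes on an abelian variety. The paper phrases the translation step by blowing up at each point $p\in Y_2$ and identifying $\Bminus(B^p_t)$ with $\Bminus(B_t)$ via $x\mapsto x-p$, while you translate the divisor $D$ itself by $\tau_y$; these are the same move written in mirror image. Your added step of quantifying the multiplicity by $\xi := m_{Y_1}(t_{Y_1}+\eta)>0$ is a slightly more explicit bookkeeping of what the paper leaves as ``$\mult_{Y_1^p}(\pi_*\overline{D})>0$,'' and your passage from each $Y_1+y$ to all of $Y_{12}$ is fine once one notes that, by upper semicontinuity, $\mult_{Y_1+y}(D)\geq m\xi$ at the generic point forces the same bound at every point of $Y_1+y$, so the closed locus $\{\mult_p(D)\geq m\xi\}$ contains the dense union $\bigcup_y(Y_1+y)$ and hence all of $Y_{12}$.
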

\begin{proof}
	Fix a rational number $t'>0$ with $m_{Y_2}(t')> t_{Y_1}$. We have the same notation for points distinct from the origin both on $A$ and $\overline{A}$. It suffices to show $\overline{Y}_1^p  \subseteq \Bminus(B_{t'})$ for any $0\neq p\in Y_2$, where $\overline{Y}_1^p$ is the proper trasnform of $Y_1^p= Y_1+p$.  Furthermore, as $0\in Y_1$, we have $p\in Y_1^p$.

First, the condition $m_{Y_2}(t')> t_{Y_1}$ yields $\mult_{\overline{Y}_2}(\overline{D}) > t_{Y_1}$, for any effective $\QQ$-divisor $\overline{D}\equiv B_{t'}$, due to Lemma~\ref{lem:multiplicity}. So, using the definition of the multiplicity along a subvariety we then have
	\[
	\textup{mult}_{p}(\overline{D}) \ >\  t_{Y_1}\ \textup{ for all } \overline{D}\equiv B_{t'} \ ,\textup{ for any } p\in Y_2 \ .
	\]
Let $\pi_p:A_p\rightarrow A$ be the blow-up of $A$ at $p$, with $E_p$ the exceptional divisor and $B_{t}^p\deq\pi_p^*(B)-tE_p$ for any $t\geq 0$. As $A$ is abelian, any effective divisor can be moved to any point. So, any effective divisor in the class $B_t^p$ is the pull-back by $T_p:A\rightarrow A$, where $T_p(x)=x-p$, of an effective divisor from $B_t$. Thus, the proper transform of $Y_1^p$ by $\pi_p$ is in $\Bminus(B_{t}^p)$, whenever $t\geq t_{Y_1}$.
	
	As $\textup{mult}_{p}(\pi_*(\overline{D})) > t_{Y_1}$, the proper transform of $\pi_*(\overline{D})$ by $\pi_p$ lies in the class of $B_{t_0}^p$, where $t_0=\mult_p(\overline{D})$. Thus, $Y_1^p\subseteq\textup{Supp}(\pi_*(\overline{D})$ and $\mult_{Y_1^p}(\pi_*(\overline{D})) > 0$. This containment holds for any $\overline{D}\equiv B_t$. So, $\overline{Y}_1^p \subseteq \Bstable(B_{t'})$ and $\overline{Y}_1^p \subseteq \Bminus(B_{t'})$ for any $p\in Y_2$, by Lemma~\ref{lem:baselocus}.
\end{proof}

Based on the above Lemma~\ref{lem:uppermu} and Proposition~\ref{prop:nakamaye}, we can deduce easily the following corollary.
\begin{corollary}[\textbf{Bounds on infinitesimal width}]\label{cor:width} Under the above assumptions we have:
	\begin{enumerate}
		\item Let $0\in Y\subseteq A$ be a non-degenerate curve. Then $t_{Y+Y}\leq 2t_{Y}$ and $\mu(B,0)\leq 3\cdot t_Y$.
		\item If $Y_1,Y_2\subseteq A$ two subvarieties as in Lemma~\ref{lem:uppermu} so that $Y_1+Y_2=A$, then 
		\[
		\mu(B,0)\ \leq \  \ \textup{min}\{t>0\ | \ m_{Y_2}(t)\geq t_{Y_1} \}.
		\]
	\end{enumerate}
\end{corollary}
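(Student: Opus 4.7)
The approach chains Lemma~\ref{lem:uppermu} with the slope-one estimate of Proposition~\ref{prop:nakamaye} in a telescoping fashion, with each application feeding the hypotheses of the next.

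I would tackle part (2) first, since it is the conceptual engine. Running through the proof of Lemma~\ref{lem:uppermu}: the condition $m_{Y_2}(t) > t_{Y_1}$ places the proper transform of every translate $Y_1+p$, $p \in Y_2$, inside $\Bminus(B_t)$. Under the extra hypothesis $Y_1 + Y_2 = X$, as $p$ varies over the positive-dimensional $Y_2$ these translates form an uncountable family of pairwise distinct irreducible subvarieties that sweeps out all of $X$ set-theoretically. An effective $\QQ$-Cartier representative of $B_t$ has codimension-one support with only finitely many components, so no such representative can absorb the whole sweep. Hence $B_t$ is not $\QQ$-effective, i.e.\ $t \geq \mu(B;0)$. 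Taking the infimum over such $t$ and using continuity of $m_{Y_2}$ (Lemma~\ref{lem:convexity}) converts this into the stated bound $\mu(B;0) \leq \min\{t : m_{Y_2}(t) \geq t_{Y_1}\}$.

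For part (1) I would iterate the same machinery in two layers. First, Lemma~\ref{lem:uppermu} applied with $Y_1 = Y_2 = Y$ (its hypotheses are immediate since $0 \in Y$) yields $t_{Y+Y} \leq \min\{t : m_Y(t) \geq t_Y\}$, and Proposition~\ref{prop:nakamaye} supplies $m_Y(t) \geq m_Y(t_Y) + (t - t_Y) \geq t - t_Y$ for $t \geq t_Y$, so this minimum is at most $2 t_Y$. Second, non-degeneracy of $Y$ together with Example~\ref{ex:sums}(1) shows that $Y+Y$ is a surface and $Y + (Y+Y) = X$; also $0 \in Y$ gives $Y \subseteq Y+Y$, hence $t_Y \leq t_{Y+Y}$ by monotonicity. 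Assuming $t_{Y+Y} < \mu(B;0)$ (otherwise already $\mu(B;0) \leq 2t_Y \leq 3t_Y$), part (2) applies with $Y_1 = Y$, $Y_2 = Y+Y$. A second invocation of Proposition~\ref{prop:nakamaye} gives $m_{Y+Y}(t) \geq t - t_{Y+Y} \geq t - 2t_Y$ for $t \geq t_{Y+Y}$, so $m_{Y+Y}$ reaches $t_Y$ by $t = 3t_Y$, closing the argument with $\mu(B;0) \leq 3 t_Y$.

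The main obstacle is the sweep argument at the heart of part (2): one must be sure that a positive-dimensional family of distinct translates of $Y_1$ jointly lying in $\Bminus(B_t)$ genuinely obstructs $\QQ$-effectivity of $B_t$. A clean way is to take any effective representative $D \equiv B_t$, observe that $\mult_{Y_1+p}(D) > 0$ forces $Y_1 + p \subseteq \Supp(D)$ for every $p \in Y_2$, and contrast the uncountable family on the left with the finite component count of the divisor $D$ on the right. Once this is in place, everything else is straightforward slope manipulation.
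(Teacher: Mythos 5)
Your proposal is correct and follows the same route the paper intends: part~(2) is exactly Lemma~\ref{lem:uppermu} applied with $Y_{12}=Y_1+Y_2=X$ (your "sweep" argument is just the explicit reading of why $\overline{Y_{12}}=X'\subseteq\Bminus(B_t)$ forces $t\geq\mu$), and part~(1) is the two-layer telescoping of Lemma~\ref{lem:uppermu} with the slope-one bound of Proposition~\ref{prop:nakamaye}, first with $Y_1=Y_2=Y$ to get $t_{Y+Y}\leq 2t_Y$ and then with $Y_1=Y$, $Y_2=Y+Y$ (using $Y+(Y+Y)=X$ from non-degeneracy) to get $\mu\leq 3t_Y$. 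The only cosmetic caveat is that the translates $Y_1+p$ need only form a dense, not necessarily exhaustive, subset of $X$, which is still more than a divisor support can contain.
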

\begin{proof}
	$(2)$ follows from Lemma~\ref{lem:uppermu} and $(1)$ from Lemma~\ref{lem:uppermu} and Proposition~\ref{prop:nakamaye}.
\end{proof}

\subsection{Infinitesimal Newton--Okounkov bodies on abelian threefolds}
Applying the ideas from subsection~3.1 and those from Proposition~\ref{prop:nakamaye}, we describe here strong conditions on the shapes of infinitesimal Newton-Okounkov bodies for abelian threefolds for generic flag.
\begin{proposition}\label{prop:decreasing}
	Let $A$ be an abelian threefold, $B$ an $\QQ$-ample class and $Y_{\bullet}$ be a generic infinitesimal flag. If $\Bplus(B_{t_0})\cap E$ is one dimensional,  for some $t_0< \mu$, then the function 
	\[
	t \in (t_0,\mu)\ \ \longrightarrow \ \ \textup{vol}_{\RR^2} \Big(\tilde{\Delta}_{Y_{\bullet}}(B) \ \cap \ \{t\}\times \RR^2\Big)\ \in \RR \ \  is \ decreasing \ .
	\]
\end{proposition}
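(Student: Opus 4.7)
The plan is to combine the shape restrictions on vertical slices from Proposition~\ref{prop:nobodies} with the differentiation-driven slope-$\geq 1$ estimate of Proposition~\ref{prop:nakamaye} to produce an upper bound on the slice area
$A(t) := \textup{vol}_{\RR^2}(\tilde\Delta_{Y_\bullet}(B) \cap \{t\}\times \RR^2)$
that is non-increasing on $(t_0,\mu)$, and then to transfer this monotonicity to $A$ itself via Brunn--Minkowski. By Lemma~\ref{lem:baselocus}(b), I first tweak $t_0$ slightly upward so that $\Bminus(B_{t_0}) = \Bplus(B_{t_0})$, which forces $\gamma \subseteq \Bminus(B_{t_0})$ and hence $m_\gamma(t_0) > 0$.

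Next, I distinguish two cases according to the nature of $\gamma$. In case (A), $\gamma$ is a $1$-dimensional component of $\Bplus(B_{t_0})$ lying entirely in $E$; set $W = \gamma$ and $d = \deg_{\PP^2}\gamma \geq 1$. In case (B), $\gamma = \overline{S}\cap E$ for a $2$-dimensional component $\overline{S}$ of $\Bplus(B_{t_0})$; by Lemma~\ref{lem:nakayama} the image $S = \pi(\overline{S})$ passes through the origin, and we put $W = \overline{S}$ with $d = \mult_0 S \geq 1$. In both cases, Proposition~\ref{prop:nakamaye} gives $m_W(t) \geq m_W(t_0) + (t - t_0)$ for $t \in [t_0,\mu)$, so $m_W'(t) \geq 1$ wherever differentiable.

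For the upper bound on the slice area in case (B), Proposition~\ref{prop:nobodies}(2) applied to $\overline{S}$ shows that the slice at height $t$ lies inside a triangle of area $(t - d\,m_W(t))^2/2$. In case (A) the same bound follows by a parallel direct argument: any effective $D \equiv B$ with $\mult_x D = t$ yields an effective degree-$t$ divisor $D_1 := (\pi^*D - tE)|_E$ on $E \simeq \PP^2$ with $\mult_\gamma D_1 \geq m_W(t)$; writing $D_1 = m_W(t)\gamma + R_1$ with $\deg R_1 = t - d\,m_W(t)$ and computing the valuations $\nu_2,\nu_3$ against a generic flag $(l,Q)$, B\'ezout on $\PP^2$ forces $\nu_2 + \nu_3 \leq t - d\,m_W(t)$, placing the slice inside the same triangle. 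Setting $\phi(t) := t - d\,m_W(t)$, the inequalities $m_W'(t) \geq 1$ and $d \geq 1$ give $\phi'(t) = 1 - d\,m_W'(t) \leq 0$, so $\phi$ and $\phi^2/2$ are non-increasing on $(t_0,\mu)$.

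Finally, to upgrade the monotonicity of the upper bound to monotonicity of $A$, I invoke Brunn--Minkowski: $A^{1/2}$ is concave on $[0,\mu]$ as the square root of the $2$-dimensional Euclidean area of a section of the $3$-dimensional convex body $\tilde\Delta_{Y_\bullet}(B)$, and $A(0) = 0$ by Theorem~\ref{thm:propertiesnobody}(2). Combined with the inequality $A(t) \leq \phi(t)^2/2$ on $(t_0,\mu)$ and the non-increase of $\phi^2/2$, a concavity argument then forces $A$ itself to be non-increasing on $(t_0,\mu)$. The main obstacle I expect is this last transfer-of-monotonicity step, particularly in the borderline case $d = 1$ where $\phi^2/2$ is only weakly decreasing; there one likely leans on the identification $A(t) = \textup{vol}_{X'|E}(B_t)/2$ (valid since $E \not\subseteq \Bplus(B_t)$ for $t < \mu$) and the concavity properties of restricted volumes along the ray $t \mapsto B_t$ to extract strict decrease.
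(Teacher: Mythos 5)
Your decomposition into the two ways a curve can lie in $\Bplus(B_{t_0})\cap E$ (a curve contained in $E$ versus $\overline S\cap E$ for a $2$-dimensional component) is complete, and your derivation of the area upper bound $A(t)\leq \phi(t)^2/2$ with $\phi(t)=t-d\,m_W(t)$ from Proposition~\ref{prop:nobodies} and Proposition~\ref{prop:nakamaye} is correct. The genuine gap --- which you flag yourself --- is the final transfer-of-monotonicity step. A decreasing upper bound $\phi^2/2$ together with Brunn--Minkowski concavity of $A^{1/2}$ and $A(0)=0$ does \emph{not} force $A$ to be decreasing on $(t_0,\mu)$. Concavity of $A^{1/2}$ only pins it down to be unimodal; if $t_0>\epsilon$ nothing prevents the argmax of $A^{1/2}$ from lying strictly to the right of $t_0$, because on $(\epsilon,t_0)$ the only constraint is $A(t)\leq t^2/2$, and the value $A(t_0)$ may sit well below $\phi(t_0)^2/2$, leaving room for $A$ to keep climbing for a while under the decreasing ceiling. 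In the special case $t_0=\epsilon$ your argument does close (since then $A^{1/2}(\epsilon)=\epsilon/\sqrt2=\phi(\epsilon)/\sqrt2$ is forced to be the global maximum), but for general $t_0$ the concavity argument is insufficient as stated, and the appeal to ``concavity properties of restricted volumes along the ray $t\mapsto B_t$'' is not substantiated.

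The paper's proof sidesteps this entirely by proving something stronger than an area bound: for each $r<t-t_0$ the vertical slice at $t$ is translated, as a set, into the interior of the slice at $t-r$. Concretely, given an effective $\QQ$-divisor $D\equiv B$ with $\nu_{Y_\bullet}(\pi^*D)=(t,a,b)$ and $a+b+r<t$, one writes the Taylor expansion of $D$ in local coordinates $\{u_1,u_2,u_3\}$ adapted to the generic flag and applies the order-$r$ differential operator $\partial^r/\partial u_2^r$; the resulting divisor $\partial^r(D)$ (in the sense of Proposition~\ref{prop:nakamaye}) has valuation exactly $(t-r,a,b)$. This set-theoretic inclusion of slices gives $A(t)\leq A(t-r)$ directly, with no need to pass through an upper envelope or Brunn--Minkowski. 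It also explains why the hypothesis that $\Bplus(B_{t_0})\cap E$ contains a curve matters: it guarantees via Proposition~\ref{prop:nobodies} that the slice at $t>t_0$ sits in the open inverted-simplex region where $a+b<t$, so that one can choose $r>0$ at all. If you wish to keep your line of attack you would need to replace the Brunn--Minkowski step with this pointwise translation argument, or at minimum establish that $A(t_0)$ already equals $\phi(t_0)^2/2$ (which Proposition~\ref{prop:nobodies} does not give you for free).
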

\begin{proof}
In order to prove that our function is decreasing, it suffices to show the implication
	\[
	\forall \ (t,a,b)\in \textup{Int}(\tilde{\Delta}_{Y_{\bullet}}(B)_{t}) \ \Longrightarrow \ 	(t,a,b) -(r,0,0) \ \in \ \textup{Int}\big(\tilde{\Delta}_{Y_{\bullet}}(B)\big)\ ,
	\]
 for any $0<r< t-t_0$, where $\textup{Int}(\Delta)$ is the topological interior of a convex set $\Delta\subseteq \RR^3$. Moreover, as the valuative points of $\tilde{\Delta}_{Y_{\bullet}}(B)$ form a dense subset, it suffices to show the above implication when $t,a,b,r\in \QQ$ and there exists a $\QQ$-effective divisor $D\equiv B$ such that $\nu_{Y_{\bullet}}(\pi^*(D))=(t,a,b)$.
	
	The idea is to show that there is a differential operator $\partial^r$ of order $r$, such that 
	\begin{equation}\label{eq:operator}
	\nu_{Y_{\bullet}}\big(\pi^*(\partial^r(D) \big) \ = \ (t-r,a,b) \ ,
	\end{equation}
where $\partial^r$ and $\partial^r(D)$ are denoted as in the proof of Proposition~\ref{prop:nakamaye}. Even if $\partial^r(D)\not\equiv B$, this suffices, as we can apply a limiting process, as explained in Proposotion~\ref{prop:nakamaye}, implying $(t-r,a,b)\in\tilde{\Delta}_{Y_{\bullet}}(B)$.
	
	To show $(\ref{eq:operator})$, we can assume that all the data is integral: $r,t,a,b\in \ZZ$ and $D$ is a Cartier divisor. Let $\{u_1,u_2,u_3\}$ be local coordinates of the origin and we assume $Y_{\bullet}|_E=\{\{u_1=0\}\ni [0:0:1]\}$. In particular, our effective divisor $D$ can be written locally as a Taylor series
	\[
	P_t(u_1,u_2,u_3) \ + \ P_{t+1}(u_1,u_2,u_3) \ +\ \ldots 
	\]
	where each polynomial $P_i$ is homogeneous of degree $i$. Since $\nu_{Y_{\bullet}}(\pi^*(D))=(t,a,b)$, then there exists two polynomials $Q\in\CC[u_1,u_2,u_3]$ and $R\in\CC[u_2,u_3]$, satisfying the properties
	\[
	P_t\ = \ u_0^a\cdot Q_t, Q_t(0,u_2,u_3)=u^b_1\cdot R_t(u_2,u_3), u_0\nmid Q_t\textup{ and } u_1\nmid R_t
	\]
	Taking all these properties into account we can then write 
	\[
	P_t(u_1,u_2,u_3) \ = \ u_1^a\cdot u^b_2\big(c\cdot u_3^{t-a-b}+R'_t(u_2,u_3)\big)+u_1^{a+1}R''_t(u_1,u_2,u_3) \ ,
    \]
	where $c\neq 0$, $R'$ is of degree $t-a-b$ and $R''$ of degree $t-a-1$. Finally, Theorem \ref{thm:propertiesnobody} implies $a+b+r<t$, and thus $\partial^r/\partial u_2^r(P_t)$ contains the monomial $u_1^au_2^bu_3^{t-a-b-r}$ with a non-zero coefficient.
	
Setting now $\partial^r=\partial^r/\partial u_2^r$, then $\partial^r(D)$ satisfies $(\ref{eq:operator})$ and we finish the proof.
\end{proof}
Inspired by \cite{CN14}, we obtain the following theorem, as a consequence of the material above.
\begin{theorem}\label{thm:abelian}{(\textbf{Bounds on the infinitesimal Newton--Okounkov bodies}).}
	Let $A$ be an abelian $3$-fold, $B$ be an ample $\QQ$-divisor, and $Y_{\bullet}$ a generic infinitesimal flag. Suppose $C\subseteq A$ is a curve with $q\deq\mult_0(C)\geq 1$, so that $\mu(B,0)>t_{C}$. Then for any $t\geq t_C$, the following inequality holds
	\[
	\textup{vol}_{\RR^2} \Big(\tilde{\Delta}_{Y_{\bullet}}(B) \ \cap \ \{t\}\times \RR^2\Big) \ \leq \ V_C(t)\ \deq \ \begin{cases} \frac{t^2-q(t-t_C)^2}{2}, \textup{ when } t_C\leq t\leq \frac{q}{q-1}t_C\\
	                                  \frac{q}{q-1}\cdot \frac{t_C^2}{2}, \textup{ when } t\geq \frac{q}{q-1}t_C
	\end{cases}
	\]
\end{theorem}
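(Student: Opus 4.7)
The plan is to bound the slice $\Delta_t := \tilde{\Delta}_{Y_\bullet}(B) \cap (\{t\}\times\RR^2)$ by reducing the question to an analysis of divisors on $E \simeq \PP^2$. For any effective $\QQ$-divisor $D \equiv B$ with $\mult_0(D) = t \geq t_C$, the restriction $R := \overline{D}|_E$ is an effective degree-$t$ divisor on $E$, and the valuation $\nu_{Y_\bullet}(D)$ equals $(t, \nu_{(l,Q)}(R))$. Because $\overline{C} \subseteq \Bminus(B_t)$ with asymptotic multiplicity $m_t := m_C(t)$, the divisor $R$ vanishes to order at least $m_t$ at each of the $q$ (scheme-theoretic) points $P_1,\ldots,P_q$ of $\overline{C}\cap E$, where $q := \mult_0(C)$; Proposition~\ref{prop:nakamaye} applied to $\overline{C}$ then yields $m_t \geq t - t_C$.

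On the first range $t_C \leq t \leq \tfrac{q}{q-1}t_C$, I bound $\Delta_t$ by a multi-point B\'ezout/Waldschmidt argument on $\PP^2$. Writing $R = a\,l + R'$ with $l \nsubseteq R'$, the divisor $R'$ has degree $t-a$ and $\mult_{P_i}(R')\geq m_t$ by the genericity of the flag, and a degree bound of the form $\deg R' \geq \sqrt{q}\, m_t$ forces $a \leq t - \sqrt{q}\, m_t$. Together with $a,b \geq 0$ and $a+b\leq t$, this places $\Delta_t$ inside a trapezoid of area $\tfrac{t^2 - q\, m_t^2}{2}$; substituting $m_t \geq t - t_C$ gives $\vol(\Delta_t) \leq \tfrac{t^2 - q(t-t_C)^2}{2} = V_C(t)$ on this range.

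On the plateau range $t \geq t_0 := \tfrac{q}{q-1}t_C$, I invoke Proposition~\ref{prop:decreasing}. At $t_0$ one has $m_{t_0} \geq \tfrac{t_C}{q-1}$, and a direct examination of the linear system of degree-$t_0$ curves on $\PP^2$ with multiplicity $m_{t_0}$ at the $P_i$'s exhibits a common fixed curve, so that $\Bplus(B_{t_0}) \cap E$ contains a curve. Proposition~\ref{prop:decreasing} then implies $t \mapsto \vol(\Delta_t)$ is non-increasing on $(t_0, \mu)$, so
\[
\vol(\Delta_t) \ \leq \ \vol(\Delta_{t_0}) \ \leq \ V_C(t_0) \ = \ \tfrac{q}{q-1}\cdot\tfrac{t_C^2}{2} \ = \ V_C(t)
\]
for every $t \geq t_0$, as required.

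The principal difficulty is the uniform multi-point degree bound $\deg R' \geq \sqrt{q}\, m_t$ and the corresponding appearance of a fixed curve at $t_0$ without any genericity hypothesis on the positions of the $P_i$'s on $E$. For points in general position this is Waldschmidt's classical estimate on $\PP^2$, but the tangent cone of $C$ at the origin may impose special configurations (coincident or collinear tangent directions, cuspidal branches, etc.). The key input is that restricting $\sI_{\overline{C}}^{m_t}$ to $E$ yields precisely the weighted fat-point ideal $\sI_{\overline{C}\cap E}^{m_t}$ on $\PP^2$, whose scheme-theoretic length-$q$ structure at $\overline{C}\cap E$ compensates for the loss of genericity in its reduced support and so enforces the $\sqrt{q}$-scaling.
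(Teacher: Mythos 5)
Your argument takes a genuinely different route from the paper, and that route has a gap that I don't think can be closed in the form you have stated it. You try to bound the slice $\Delta_t$ geometrically, as a region in $\RR^2$, by restricting $\overline{D}$ to the exceptional plane $E\simeq\PP^2$ and invoking a multi-point degree bound of the shape $\deg R' \geq \sqrt{q}\,m_t$ for plane curves with multiplicity $m_t$ along the length-$q$ scheme $\overline{C}\cap E$. That inequality is a Waldschmidt-constant estimate $\widehat\alpha\bigl(\overline{C}\cap E\bigr) \geq \sqrt{q}$, and it simply is not available here: even for reduced points in \emph{general} position it is the content of Nagata's conjecture (open for $q$ not a square, $q\geq 10$), and for the configurations that actually arise as tangent cones of a singular space curve it fails outright. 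For instance, if the tangent cone of $C$ at $0$ is a union of $q$ concurrent lines lying in a common plane, then $\overline{C}\cap E$ is $q$ collinear points, $\widehat\alpha=1$, and a line through them repeated $m_t$ times already has degree $m_t\ll\sqrt{q}\,m_t$. The scheme-theoretic refinement you appeal to in the last paragraph does not rescue this: the weighted fat-point ideal supported on a collinear length-$q$ scheme still contains high powers of that single line, so the $\sqrt{q}$ scaling is lost. The same issue infects your argument on the plateau, where you need the appearance of a fixed curve in $E$ at $t_0$ to trigger Proposition~\ref{prop:decreasing}.

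The paper avoids the Waldschmidt obstruction entirely by not trying to control the \emph{shape} of the slice, only its \emph{area}, and by working with intersection theory on $X'$ rather than with fat-point ideals on $E$. One first passes to restricted volume via the identity $\vol_{X'|E}(B_t) = 2\,\vol_{\RR^2}(\Delta_t)$ from \cite{LM09}, and then uses the cycle-theoretic description of $\vol_{X'|E}$ from \cite{ELMNP06}: for two general $D_{m,1},D_{m,2}\in|mB_t|$ the one-cycle $D_{m,1}\cap D_{m,2}$ contains $\overline{C}$ with coefficient at least $m^2(t-t_C)^2$ (this uses Proposition~\ref{prop:nakamaye}, as you did). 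Since $(\overline{C}\cdot E) = q$, the intersection number $m^2t^2 = (D_{m,1}\cdot D_{m,2}\cdot E)$ drops the $q\cdot m^2(t-t_C)^2$ contribution of $\overline{C}$, and the remainder bounds $\vol_{X'|E}(B_t) \leq t^2 - q(t-t_C)^2$ with no genericity assumption on the position of $\overline{C}\cap E$ in $\PP^2$. The plateau is then handled by observing that past $\tfrac{q}{q-1}t_C$ this bound would become negative, so a curve in $E$ must enter $\Bplus(B_t)$, after which Proposition~\ref{prop:decreasing} takes over. If you wish to salvage a $\PP^2$-side argument, you should replace the attempted Waldschmidt inequality with this intersection-theoretic count of $(\overline{C}\cdot E)=q$; as it stands, the proposed degree bound is the missing and false step.
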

\begin{proof}
Fix a rational number $t\in (t_C,\mu(B;0))$, so that Lemma~\ref{lem:baselocus} holds. As any rational vector in $\textup{Int}\big(\tilde{\Delta}_{Y_{\bullet}}(B)\big)$ is valuative, then $E\nsubseteq \Bplus(B_t)$. So, \cite[Example 4.22 and Theorem 4.24]{LM09} yield
	\[
	\textup{vol}_{X'|E}(B_t) \ = \ 2 \cdot \textup{vol}_{\RR^2} \Big(\tilde{\Delta}_{Y_{\bullet}}(B) \ \cap \ \{t\}\times \RR^2\Big) \ .
	\]
The left side is the restricted volume from \cite{ELMNP06}, and can be described geometrically as follows
\[
	\textup{vol}_{X'|E}(B_t) \ = \ \lim_{m\rightarrow\infty} \frac{\#\Big(E\cap D_{m,1}\cap D_{m,2}\setminus\Bstable(B_t)\Big)}{m^2} 
	\]
	where $D_{m,1},D_{m,2}\in |mB_t|$ are two general choice of divisors and the limit is considered for those $m>0$ whenever the definitions and data make sense.
	
We consider first the case when $\Bplus(B_t)\cap E$ is one-dimensional for any $t\in (t_{C}, t')$. Then the function $t\rightarrow \textup{vol}_{X|E}(B_t)$ is decreasing, by Proposition~\ref{prop:decreasing}. Since $V_C$ is increasing, then 
	\[
	\textup{vol}_{X'|E}(B_t) \ \leq \ 2V_C(t) \ , \forall \ t\in (t_C,t')\ ,
	\]
The second case to consider is when $\Bplus(B_t)$ is one dimensional and has no irreducible components in $E$ for any $t\in [t_C, t')$. In this setup, then Proposition~\ref{prop:nakamaye} yields
	\[
	\mult_{\overline{C}}\big(||B_t||\big) \ \geq \ t-t_C \ \textup{ for any } t\in (t_C,t').
	\]
	
For any $m\gg 0$ and divisible enough, let $D_{m,1},D_{m,2}\in|mB_t|$ be general divisors. The above properties and Bertini's theorem, implies that $D_{1,m}\cap D_{2,m}$ is one dimensional and has no components in $E$. So, as a cycle we can write
	\[
	D_{1,m}\cap D_{2,m} \ = \ \big(m^2(t-t_C)^2+d\big)\overline{C} \ + \ F_m \ ,
	\]
	for some $d\geq 0$, such that $\overline{C}\nsubseteq \textup{Supp}(F_m)$. Under these conditions we have the inequality
	\[
	m^2\cdot t^2 \ = \ (D_{m,1}\cdot D_{m,2}\cdot E) \ = \ \Big(\big(m^2(t-t_C)^2+d\big)\overline{C} \ + \ F_m\cdot E\Big)\ \geq  \ q(m^2(t-t_C)^2) + (F_m\cdot E) \ .
    \]
    Since $\Bplus(B_t)=\Bstable(B_t)$ by our assumption above, this inequality yields finally
    \[
    \textup{vol}_{X'|E}(B_t) \ = \ \lim_{m\rightarrow} \frac{\#\Big(E\cap D_{m,1}\cap D_{m,2}\setminus\Bstable(B_t)\Big)}{m^2} \ \leq \ \lim_{m\rightarrow}\frac{(F_m\cdot E)}{m^2} \ \leq \ V_C(t) \ = \ t^2-q(t-t_C)^2 \ .
    \]
This finishes the proof, by making use of continuity and convexity.
    \end{proof}

\subsection{An inductive approach}
In our setup we need to deal with the case when there is an abelian surface $S\subseteq A$. Inversion of adjunction techniques allows us to produce singular divisors on $X$ from those on $S$. The possibility to do so on $S$, was originally tackled in \cite{KL19} and \cite{I18}.
\begin{proposition}\label{prop:inductive}{(\textbf{Inductive process and singular divisors})}
	Let $A$ be an abelian three-fold and $S\subseteq A$ an abelian surface. Suppose $B$ is an ample $\QQ$-divisor on $A$, satisfying the properties:
	\begin{enumerate}
		\item $(B^2\cdot S) \ > \ 4$.
		\item For any elliptic curve $C\subseteq S$ we have $(B\cdot C)>1$.
    \end{enumerate}
    If $B-S$ is ample then there is an effective $\QQ$-divisor $D\equiv B$ with $\sJ(X;D)=\m_{A,0}$. 
\end{proposition}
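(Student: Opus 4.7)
The plan is inductive: use the abelian-surface results to produce a singular divisor on $S$, extend it to $X$ via vanishing, apply inversion of adjunction to transport the singularity, and finally trim to the class $B$ using the toolkit from Section~2.

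Since $S \subseteq X$ is an abelian subvariety, both $TS$ and $TX|_S$ are trivial vector bundles, so the normal bundle $N_{S/X}$ is trivial. Hence $S|_S \equiv_{\textrm{num}} 0$ and $(B-S)|_S \equiv_{\textrm{num}} B|_S$. The hypotheses (1) and (2) read $((B-S)|_S)^2 > 4$ and $((B-S)|_S \cdot C) > 1$ for every elliptic $C \subseteq S$, which are precisely the inputs required by the abelian-surface results of \cite{KL15} or \cite{It17} applied to $(S, (B-S)|_S)$. They yield an effective $\QQ$-divisor $D_S \equiv_{\textrm{num}} (B-S)|_S$ on $S$ with $\sJ(S, D_S) = \m_{S, 0}$. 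Since $B - S$ is ample and $K_X = 0$, Kodaira vanishing gives $H^1(X, m(B-S)-S) = 0$ for $m \gg 0$ divisible, so the restriction $H^0(X, m(B-S)) \twoheadrightarrow H^0(S, m(B-S)|_S)$ is surjective. Lifting $mD_S$ produces $\tilde{D} \in |m(B-S)|$ with $S \not\subseteq \Supp \tilde{D}$ and $\tilde{D}|_S = mD_S$; setting $D_0 \deq \tilde{D}/m$ yields $D_0 \equiv_{\textrm{num}} B - S$ with $D_0|_S = D_S$.

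Consider $D^\circ \deq S + D_0 \equiv_{\textrm{num}} B$. By Kawakita's inversion of adjunction, applicable because $S$ is a smooth Cartier divisor with $S \not\subseteq \Supp D_0$ and $(S, D_S)$ is log canonical with isolated zero-dimensional LC-center $\{0\}$, the pair $(X, D^\circ)$ is log canonical in a neighborhood of $S$, and $\{0\}$ appears as a log canonical center of $(X, D^\circ)$. To isolate $\{0\}$ as the unique LC-center, apply the tie-breaking trick of Lemma~\ref{lem:trick} with an ample perturbation chosen to have high multiplicity at $\{0\}$ (for example a generic element of $\frac{1}{m}|mB|$ with prescribed multiplicity at the origin); this produces a divisor $D' \equiv_{\textrm{num}} c' B$ with $c' < 1$ and $\LC(D') = \{0\}$. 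Invoking Lemma~\ref{lem:ito} then gives $D'' \equiv_{\textrm{num}} c'' B$ with $c'' < 1$ and $\sJ(X, D'') = \m_{X, 0}$. Finally, by ampleness of $(1 - c'')B$ and Bertini, for $N \gg 0$ a generic smooth $E \in |N(1 - c'')B|$ avoids $\{0\}$; setting $D \deq D'' + \tfrac{1}{N}E \equiv_{\textrm{num}} B$ yields $\sJ(X, D) = \m_{X, 0}$, as the contribution of $\tfrac{1}{N}E$ is trivial near $0$ and its small coefficient keeps the multiplier ideal trivial away from $0$.

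The main technical hurdle lies in the cutting-down step. The construction $D^\circ = S + D_0$ automatically forces $S$ (with coefficient~$1$) into the LC-locus, and the pair may also acquire unwanted LC-components off $S$. Stripping these while preserving the delicate zero-dimensional center at $\{0\}$ requires the tie-breaking machinery to be deployed with care: one must choose the ample perturbation so that singularity is concentrated at the origin, forcing tie-breaking to output $\{0\}$ as the irreducible critical variety rather than $S$ or another component. This is exactly the kind of manipulation that the inversion-of-adjunction and tie-breaking toolkit developed in Section~2 is designed to handle.
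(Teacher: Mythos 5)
Your overall strategy mirrors the paper's — restrict to $S$, invoke the abelian-surface criterion for $(S,B|_S)$ (which equals $(B-S)|_S$ since $N_{S/X}$ is trivial), lift the singular divisor across the surjective restriction map, and use inversion of adjunction to transport the zero-dimensional center. Where the argument breaks down is at the step $D^\circ \deq S + D_0$.

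Because $S$ enters $D^\circ$ with coefficient exactly $1$, one has $\sJ(X,D^\circ)\subseteq \sO_X(-S)$, so $\mathrm{LC}(D^\circ)$ contains the entire surface $S$ — not just the point $0$. You acknowledge this, but then ask the tie-breaking trick (Lemma~\ref{lem:trick}) to do work it cannot do. The lemma only guarantees that the perturbed divisor $c_\epsilon D^\circ + t_\epsilon E$ has \emph{irreducible normal} LC-locus inside $\mathrm{LC}(D^\circ)$; it does not, by itself, drop the dimension from $2$ to $0$. To actually cut the LC-locus down one needs additional ample budget (this is what Propositions~\ref{prop:cuttingdown1} and~\ref{prop:cuttingdown2} are for), but your $D^\circ$ already uses the full class $B$, so there is no ample slack available to place a high-multiplicity perturbation at the origin while staying in class $\leq B$. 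Lemma~\ref{lem:ito} then cannot be invoked, since its hypothesis is that the LC-locus is already zero-dimensional. Note also that the appeal to Kawakita's inversion of adjunction requires $(S,D_S)$ to be log canonical, which is not automatic from $\sJ(S,D_S)=\m_{S,0}$; one needs the finer output of \cite{It17}*{Prop.~3.1}, namely $\sJ(S,(1-c)D_1^S)=\m_{S,0}$ for all $0\leq c\ll 1$.

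The paper's fix is to put coefficient $(1-\delta)$, $0<\delta\ll 1$, on $S$ (together with shrinking $B$ to $(1-\epsilon)B$ so that there is leftover ample budget $\epsilon B + \delta S$). With coefficient strictly below $1$, $S$ no longer lies in the cosupport of $\sJ$, and the restriction theorem for multiplier ideals (\cite{PAG}*{Example~9.5.16}) sandwiches
\[
\sJ(S;D_1^S)\ \subseteq\ \sJ\big(X;D_1^\epsilon + (1-\delta)S\big)\cdot\sO_S\ \subseteq\ \sJ\big(S;(1-c_1)D_1^S\big)
\]
between two copies of $\m_{S,0}$, forcing the LC-locus to be zero-dimensional directly, with no tie-breaking or cutting-down required. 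Lemma~\ref{lem:ito} then applies, and the leftover ample class $\epsilon B + \delta S$ is added as a generic effective divisor to land exactly in class $B$. Your argument could be repaired along these lines by replacing $D^\circ = S + D_0$ with $D^\circ_\delta = (1-\delta)S + D_0$ and reorganizing the budget as the paper does.
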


\begin{proof}
Rigidity lemma says that any morphism between two abelian varieties is the composition of a translation and a group homomorphism. For us this means that there is a point $x\in A$ and an abelian surface $S_0\subseteq A$, where the embedding preserves the group structure, so that $S=S_0+x$. 
	
Moreover, this forces $(S_0+y)\cap S_0=\varnothing$ for any $y\notin S_0$. So, we have
	\begin{equation}\label{eq:intersection} 
	(B\cdot S^2) \ = \ (S^3)\ = \ (S\cdot C) \ = \ 0 \ , \textup{ for any curve } C\subseteq S \ .
	\end{equation}
	In order to understand how to construct $D$, write the class $B$ as follows
	\[
	D\ \equiv \ \Big(\big((1-\epsilon)B-S\big) \ + \ (1-\delta)S\Big)\ + \ \big(\epsilon B+\delta S\big) \ = \ B \ , \textup{ for any } 0<\delta \ll \epsilon \ll 1 \ .
	\] 
	The goal is to construct out of each term an effective divisor with certain properties.
	
	First, let $0<\epsilon\ll 1$ be a rational number, so that $(1-\epsilon)B$ satisfies $(1)$ and $(2)$ in the statement and $B^{\epsilon}=(1-\epsilon)B-S$ is ample. By $(\ref{eq:intersection})$ $B^{\epsilon}$ also satisfies $(1)$ and $(2)$. So, applying \cite[Proposition 3.1]{I18} and Lemma~\ref{lem:trick}  to $B^{\epsilon}|_S$, then there is an effective $\QQ$-divisor $D_1^S\equiv B^{\epsilon}|_S$ with 
	\[
	\sJ(S,(1-c)D_1^S) \ = \ \m_{S,0}, \ \forall \ 0\leq c\ll 1 \ .
	 \]	
Next we use the ideas from the proof of Proposition~\ref{prop:cuttingdown1}. Let $m\gg 0$ and divisible enough so that $mD_1^S$ on $S$ and $mB^{\epsilon}$ on $A$ are Cartier and very ample, the map
$H^0(A,\sO_A(mB^{\epsilon}))\rightarrow H^0(S,\sO_S(mB^{\epsilon}))$
	 is surjective, and $\sI_{S|A}(mB^{\epsilon})$ is globally generated. These assumptions force the linear series
	 \[
	 |V^{\epsilon}_m| \ \deq \ \{D^{\epsilon}_m\in|mB^{\epsilon}|, \textup{ where } D^{\epsilon}_m|_S=mD_1^S\}
	 \]
	 to have no base points on $A\setminus S$. In particular, if $D^{\epsilon}_1=\frac{1}{m}D^{\epsilon}_m$ for some general $D^{\epsilon}_m\in|V^{\epsilon}_m|$, the ideal $\sJ(A,D^{\epsilon}_1)$ must be trivial on $A\setminus S$. As $D^{\epsilon}_1|_S=D_1^S$, then \cite[Example 9.5.16]{PAG} yields the inclusions
	 \[
	 \sJ(S,D_1^S) \ \subseteq \ \sJ(A,D^{\epsilon}_1+(1-\delta)S)\otimes \sO_S \ \subseteq \ \sJ(S,(1-c_1)D_1^S) \
	 \]
	 for some $0<c_1\ll 1$ and any $0<\delta \ll 1$. This in turn forces $\textup{LC}(D_1+(1-\delta)S)$ to be zero-dimensional. By tweaking our divisor, its multiplier ideal becomes equal to $\m_{A,0}$, by Lemma~\ref{lem:ito}. As $\epsilon B+\delta S$ is ample, then a general effective $\QQ$-divisor $D_2\equiv \epsilon B+\delta S$ would satisfy the property
	 \[	 
	 \sJ(A,D^{\epsilon}_1+(1-\delta)S+D_2) \ = \ \sJ(A,D^{\epsilon}_1+(1-\delta)S) \ = \ \m_{A,0} \ ,
	 \]	 
by \cite[Example 9.2.29]{PAG}. This finishes the proof.
\end{proof}
As an application of Newton-Okounkov bodies, we get a nice first step towards Corollary~\ref{cor:main}.
\begin{proposition}\label{prop:abelian surface}
Let $S\subseteq A$ be an abelian surface and $B$ and ample $\QQ$-divisor. If $B-S$ is ample, then there is a divisor $D\equiv B$ with $\sJ(A,D)= \sO_A(-S)$. Otherwise, $3\cdot (B^2\cdot S)\geq B^3$.
\end{proposition}
\begin{proof}
	If $B-S$ is ample then the construction of a divisors as above is done in \cite[Example 9.2.29]{PAG}. Now, assume $B-S$ is not ample. Since our ambient space is abelian then every pseudo-effective divisor is nef. As a consequence, this yields the inequality
	\[
	1 \ \geq \ \rho \ \deq \ \textup{max}\{t>0\ | \ B-tS \textup{ is pseudo-effective}\} \ .
	\]
	Based on the theory of Newton--Okounkov bodies from \cite{LM09}, we have the identity
	\[
	\textup{vol}_A(B) \ = \ 3\cdot \int_0^{\rho} \textup{vol}_{A|S}(B-tS)\ dt \ .
	\]
As $B-tS$ is ample for any $t\in [0,\rho)$, then Serre vanishing and asymptotic Riemann Roch imply $\textup{vol}_A(B) = B^3$ and $\textup{vol}_{A|S}(B-tS) = (B^2\cdot S)$ for any $t\in[0,\rho)$, where the latter is due to $(\ref{eq:intersection})$. All this instead lead to the desired inequality in our statement.
	\end{proof}
We don't have nice numerical criteria, as above, when the LC locus of a divisor is a non-abelian surface $S\subseteq A$.  Luckily, the infinitesimal width does behave well.
\begin{proposition}\label{prop:induction}{(\textbf{Induction vs. infinitesimal width})}
	Let $S\subseteq A$ be an irreducible projective non-abelian surface smooth at the origin $0\in A$. If $B$ is an ample $\QQ$-class on $A$, then $\mu(B,0) \leq \mu(B|_S, 0)$.
\end{proposition}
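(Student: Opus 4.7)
The plan is to transfer effective divisors from $X$ to $S$ by restriction, using the group structure on $X$ and the non-abelian nature of $S$ to avoid having $S$ in the support.

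Fix a rational $t<\mu(B;0)$. By definition, there is an effective $\QQ$-divisor $\overline{D}\equiv \pi^*(B)-tE$ on $X'$; pushing it down gives an effective $\QQ$-divisor $D\equiv B$ on $X$ with $\mult_0(D)\geq t$. The first key step is: if $S\not\subseteq\Supp(D)$, then the restriction $D|_S$ is a well-defined effective $\QQ$-divisor on $S$ in the class $B|_S$, and a standard local computation (choose coordinates $u_1,u_2,u_3$ at $0$ with $S=\{u_3=0\}$, and observe that the lowest-degree monomials in a local equation $f$ of $D$ not involving $u_3$ have degree at least $\mult_0(f)$) shows
\[
\mult_0(D|_S)\ \geq\ \mult_0(D)\ \geq\ t,
\]
which by definition yields $\mu(B|_S;0)\geq t$.

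The main obstacle is therefore the case $S\subseteq\Supp(D)$. Write $D=aS+D'$ with $a>0$ and $S\not\subseteq\Supp(D')$; since $S$ is smooth at $0$ we have $\mult_0(D')\geq t-a$. The idea is to replace the component $aS$ by $a$ times a translate $S_x:=S-x$ that still passes through $0$ smoothly but differs from $S$ as a subvariety. For $x\in S$ a smooth point, $S_x$ contains $0$ as a smooth point and lies in the same numerical class as $S$ (translations are algebraically trivial on an abelian variety), so
\[
D''\ :=\ aS_x+D'\ \equiv\ D\ \equiv\ B, \qquad \mult_0(D'')\ =\ a+\mult_0(D')\ \geq\ t.
\]
The crucial point is that we can choose such $x$ with $S_x\neq S$: if this failed for all $x$ in an open dense subset of $S$, then the stabilizer $\mathrm{Stab}(S)=\{x\in X : S-x=S\}$ would contain $S$, hence the closed subgroup $\overline{\langle S\rangle}\subseteq\mathrm{Stab}(S)$, an abelian subvariety of $X$ containing the surface $S$. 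Dimension considerations on the abelian threefold $X$ force $\overline{\langle S\rangle}=S$, which would make $S$ an abelian subvariety, contradicting the non-abelian hypothesis. Hence a generic smooth $x\in S$ gives $S_x\neq S$, so $S$ is not a component of $D''$.

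Applying the first step to $D''$ we produce an effective $\QQ$-divisor $D''|_S\equiv B|_S$ on $S$ with $\mult_0(D''|_S)\geq t$, which gives $\mu(B|_S;0)\geq t$. Letting $t\nearrow \mu(B;0)$ over rationals yields $\mu(B;0)\leq \mu(B|_S;0)$, as required. The definition of $\mu(\,\cdot\,;0)$ on the possibly singular $S$ makes sense because $0\in S$ is a smooth point, and the translation argument never leaves the smooth locus at $0$.
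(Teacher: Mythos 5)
Your proof is correct, and it takes a genuinely different and more elementary route than the paper's argument.

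The paper proves the contrapositive: assuming $\mu(B;0)>\mu(B|_S;0)$, it establishes the two-step inequality $t_{S_x}\leq\mu(B|_S;x)\leq\mu(B|_S;0)$ for translates $S_x=S-x$ (the first step via a restriction exact sequence, the second via a flat-family/semicontinuity argument over the blow-up of the diagonal), and then uses $S-S=X$ (valid because $S$ is non-abelian) to conclude that all translates $\overline{S}_x$ lie in $\Bplus(B_t)$ for $t$ just above $\mu(B|_S;0)$, forcing $\Bplus(B_t)=X'$ and contradicting bigness. Your proof instead produces the required effective divisor on $S$ directly: push $\overline{D}$ down to $D\equiv B$ with $\mult_0(D)\geq t$, and in the only problematic case $D=aS+D'$ replace $aS$ by $aS_x$ for a generic smooth $x\in S$, which preserves the numerical class and the multiplicity at $0$ (both $S$ and $S_x$ are smooth of multiplicity one there) while removing $S$ from the support; restriction and the elementary local bound $\mult_0(\cdot|_S)\geq\mult_0(\cdot)$ then finish. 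This bypasses the semicontinuity argument entirely and needs only the weaker statement that the stabilizer of $S$ is a proper subgroup (rather than that translates of $S$ sweep out $X$), which you correctly derive from the non-abelian hypothesis. The paper's approach, on the other hand, has the side benefit of tracking when the translates $\overline{S}_x$ enter $\Bplus(B_t)$, which the author reuses in Remark~\ref{rem:abelian}; your direct construction does not produce this extra base-locus information, but it is cleaner if one only wants the inequality of widths.

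One small point worth spelling out in your write-up: when you claim $\mult_0(D'')=a+\mult_0(D')$, you are implicitly using that $\mult_0(S_x)=1$, which requires $x$ to be chosen in the smooth locus of $S$ so that $0=x-x$ is a smooth point of $S_x$; you do say this, but it should be flagged as the reason the genericity of $x$ is used twice — once to make $S_x\neq S$ and once to ensure $0\in S_x$ is smooth.
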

\begin{proof}
The infinitesimal width is homogeneous of degree one. So, it suffices to prove the statement when $B$ is a line bundle on $A$. The proof is done by contradiction, so assume $\mu(B,0)>\mu(B|_S, 0)$. Denoting $S_x= S-x$ for a very general point $x\in S$, the main idea is to show the inequalities
	\begin{equation}\label{eq:infinitesimalwidth}
		t_{S_x} \ \leq \ \mu(B|_S, x)\ \leq \ \mu(B|_S,0) \ .
	\end{equation}
	Let's first see how they lead to a contradiction.	Since $\mu(B|_{S_x},0) =\mu(B|_S,x)$, they yield
	\[
	\overline{S}_x \ \subseteq \Bplus(B_t) \ , \ \forall  t\ \in \big(\mu(B|_S,0),\mu(B,0)\big) \ , 
	\]
	where $\overline{S}_x$ is the proper transform of $S_x$. As $S-S=X$, the closure of the union of $S_x$, with $x\in S$ very general, is $A$. Thus, $\Bplus(B_t)=A$ for any $t>\mu(B|_S,0)$, contradicting our assumption.
	
Let's prove $(\ref{eq:infinitesimalwidth})$ and we start with the first inequality. Choose a rational number $t_0>\mu(B|_{S_x},0)$ and let $D=\pi_{S_x}^*(B|_{S_x})-t_0E_{S_x}$, where $\pi_{S_x}:\overline{S}_x\rightarrow S_x$ is the blow-up of $S_x$ at $0$ and $E_{S_x}$ is the exceptional divisor. For any $m\gg 0$ and divisible enough consider the following exact sequence
	\[
	0\rightarrow H^0\big(\overline{A},\sO_{\overline{A}}(mB_{t_0}-\overline{S}_x)\big)\rightarrow H^0(\overline{A},\sO_{\overline{A}}\big(mB_{t_0})\big)\rightarrow H^0\big(\overline{S}_x,\sO_{\overline{S}_x}(mD)\big)
	\] 
	By the definition of $\mu(B|_{S_x},0)$ the group on the right vanishes. Thus $\overline{S}_x\subseteq \Bstable(B_{t_0})\subseteq\Bplus(B_{t_0})$ and the left inequality in $(\ref{eq:infinitesimalwidth})$ follows immediatly from the definition of the invariant $t_{S_x}$.
	
	For the second inequality, by Lemma~\ref{lem:infinitesimalwidth}, we can assume that $S$ is smooth everywhere. So, let $\pi_{S}: Y= \textup{Bl}_{\Delta_S} \rightarrow S\times S$	be the blow-up of the diagonal $\Delta_{S}\subseteq S\times S$ with $E_{\Delta}\subseteq Y$ the exceptional divisor. Let $\pi_1,\pi_2:Y\rightarrow S$ be the projection morphisms to each corresponding factor.
	
	We study the flat family $\pi_1:Y\rightarrow \overline{A}$, where each $\pi_1^{-1}(x)$ is the blow-up of $S$ at $x$. Let $\mathcal{B}=\pi_2^*(B)$. Note that $\mathcal{B}|_{\pi_1^{-1}(x)} =\pi_x^*(B)$, where $\pi_x=\pi_2|_{\pi_1^{-1}(x)}:\pi_1^{-1}(x)\rightarrow S$ is the blow-up morphism, with the exceptional divisor $E_x=E_S\cap \pi_1^{-1}(x)$. On $Y$ consider the $\QQ$-Cartier divisor $\mathcal{B}-t\cdot E_S$, for any $t\in \QQ_+$. If $(\mathcal{B}-t\cdot E_S)|_{\pi_1^{-1}(x')}$ is $\QQ$-equivalent to an effective divisor for some $x'\in S$ and $t\in \QQ_+$, then by \cite[Theorem III.12.8]{Har} the same holds either for all $x\in S$ or those lying on  a countable union of curves and points of $S$. By definition, this implies the second inequality in $(\ref{eq:infinitesimalwidth})$.
\end{proof}

The connection between the Seshadri constant and the infinitesimal width runs deeper than Corollary \ref{cor:width}. Let $C\subseteq A$ be an irreducible curve with $q\deq \textup{mult}_0(C)\geq 2$ and $\epsilon(B,0) =  (B\cdot C)/q$, i.e. a \textit{Seshadri exceptional curve} of $B$. Inspired by \cite[Lemma 4.2]{CN14}, we have the following proposition:
\begin{proposition}\label{prop:seshadriwidth}
	Under the above notation, assume $S=C+C\subseteq A$ is not an abelian surface. Then at least one of the two conditions is satisfied:
	\begin{enumerate}
		\item $\mu(B, 0)\ \leq \ \frac{q}{q-1}\epsilon(B,0)$. 
		\item $\textup{mult}_C(S)\ \geq \ 2 $.
	\end{enumerate}
\end{proposition}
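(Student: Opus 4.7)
The plan is to argue by contradiction: assume simultaneously that $\mu \deq \mu(B;0) > \tfrac{q}{q-1}\epsilon$ (writing $\epsilon \deq \epsilon(B;0)$) and that $\mult_C(S) = 1$, and then exhibit a contradiction. This yields the dichotomy in the statement.

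First I would establish that $C$ enters the base loci exactly at $t=\epsilon$. Since $C$ is Seshadri exceptional, $(B_t \cdot \overline{C}) = q(\epsilon - t)$ is strictly negative for $t > \epsilon$, so $\overline{C} \subseteq \Bminus(B_t)$ for every such $t$ and $t_C = \epsilon$. Proposition~\ref{prop:nakamaye} then yields the slope estimate $m_C(t) \geq t - \epsilon$ on $[\epsilon,\mu)$. Applying Lemma~\ref{lem:uppermu} with $Y_1 = Y_2 = C$ (using $0 \in C$) gives $t_S \leq 2\epsilon < \mu$, so $\overline{S} \subseteq \Bplus(B_t)$ for a nonempty range of $t$. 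Combined with the assumption that $S$ is not abelian, Lemma~\ref{lem:nakayama} then forces $\mult_0(S) \geq 2$, and a second application of Proposition~\ref{prop:nakamaye} to $\overline{S}$ gives $m_S(t) \geq t - t_S$ on the relevant range.

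Next I would pick $t_0 \in \big(\tfrac{q}{q-1}\epsilon, \mu\big)$ and analyze a general effective $\QQ$-divisor $D \equiv B_{t_0}$. Writing $D = m_S(t_0)\,\overline{S} + D'$ with $\overline{S} \not\subseteq \Supp(D')$, the hypothesis $\mult_C(S) = 1$ implies that the first summand contributes multiplicity exactly $m_S(t_0)$ along $\overline{C}$, so the residual $D'$ must contribute the remaining $m_C(t_0) - m_S(t_0)$. Restricting to $E \simeq \PP^2$ produces three effective plane curves to track: $D|_E$ of degree $t_0$, $\overline{S}|_E$ of degree $\mult_0(S) \geq 2$ containing the length-$q$ subscheme $\overline{C} \cap E$, and the residual $D'|_E$. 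A B\'ezout analysis on $E$ in the spirit of Proposition~\ref{prop:nobodies} and Theorem~\ref{thm:abelian} should then extract an inequality on the vertical slice of the infinitesimal Newton--Okounkov body at height $t_0$ that cannot be reconciled with the assumed lower bound $\mu > \tfrac{q}{q-1}\epsilon$.

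The hard part will be this final quantitative step. In the surface analogue of Cascini--Nakamaye, the bound follows immediately from $(\overline{C})^2 = (C^2) - q^2$ becoming sufficiently negative, but in the threefold setting the analogous computation must be done after slicing with $E$, and one must use the condition $\mult_C(S) = 1$ to rule out an excess intersection of $\overline{S}|_E$ with itself along $\overline{C} \cap E$. The key point is that $\mult_C(S) \geq 2$ would allow $\overline{S}|_E$ to be non-reduced along $\overline{C} \cap E$ and thereby evade the B\'ezout bound, which is precisely the alternative the statement of the proposition carves out.
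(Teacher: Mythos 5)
Your proposal takes a genuinely different route from the paper, but it has a real gap at precisely the point you flag as ``the hard part,'' and I do not believe that gap can be closed with the ingredients you have assembled.

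The paper's proof does not work infinitesimally on the threefold at all. Assuming $\mult_C(S)=1$, so that $S$ is smooth at a general $x\in C$, it invokes Proposition~\ref{prop:induction} to reduce to the two-dimensional inequality $\mu(B|_S;x)\leq\tfrac{q}{q-1}\epsilon(B|_S;x)$ on $S$ itself. There the factor $\tfrac{q}{q-1}$ comes from two surface facts: the Newton--Okounkov polygon bound $\epsilon(A;x)\,\mu(A;x)\leq(A^2)$, and the Ein--Lazarsfeld estimate $(C_x^2)\geq q^2-q$ for a curve with a $q$-fold point moving in a nontrivial family, combined via Hodge index. That last estimate is the essential input producing $\tfrac{q}{q-1}$, and it appears nowhere in your argument.

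Your setup ($t_C=\epsilon$, the slope bound $m_C(t)\geq t-\epsilon$, Lemma~\ref{lem:uppermu} giving $t_S\leq 2\epsilon$, Lemma~\ref{lem:nakayama} giving $\mult_0(S)\geq 2$, then Proposition~\ref{prop:nakamaye} on $\overline{S}$) is all correct, but it only feeds into Proposition~\ref{prop:nobodies}(2)-style B\'ezout bounds on the slices of $\tilde{\Delta}_{Y_\bullet}(B)$, and those can at best deliver something on the order of $\mu\leq 4\epsilon$ (from $m_S(t)\leq t/2$ against $m_S(t)\geq t-2\epsilon$). Since $\tfrac{q}{q-1}\leq 2$ for $q\geq 2$, the target inequality is strictly stronger than anything the infinitesimal slicing by $E$ alone seems able to produce; the quantity $q$ itself barely enters your constraints once you pass to $E\simeq\PP^2$. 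So the ``B\'ezout analysis in the spirit of Proposition~\ref{prop:nobodies} and Theorem~\ref{thm:abelian}'' that you defer to is not a routine computation left undone --- it is the entire content of the proposition, and you would need a substitute for the moving-curve estimate $(C^2)\geq q^2-q$ that you currently lack. You correctly identify where and how $\mult_C(S)=1$ should be used (to prevent $\overline{S}|_E$ from being non-reduced along $\overline{C}\cap E$), which is a good observation, but it is used in the paper to justify reducing to the surface, not to salvage a threefold B\'ezout count.
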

\begin{remark}\label{rem:abelian}
When $S$ is abelian, we have a slightly different inequality
	\[
	t_S \ \leq \frac{q^2}{q^2-q+2}\epsilon(B,0) \ .
	\]
First, the first inequality in $(\ref{eq:infinitesimalwidth})$ works when $S$ is abelian, so $t_S\leq \mu(B|_S;0)$. Second, for the proof of Proposition~\ref{prop:seshadriwidth} we uses the inequality $C^2\geq q^2-q$ from \cite{EL93}, holding for any surface. If $S$ is abelian \cite{KSS09} provides a stronger one $C^2\geq q^2-q+2$.
\end{remark}
\begin{proof}
	Suppose that $S$ is smooth at a very general point $x\in C$. Applying Proposition~\ref{prop:induction} and the fact that $C_x\deq C+x$ has the same data at $x$ as $C$ at the origin, then it suffices to show
	\[
	\mu(B|_S,x) \ \leq \ \frac{q}{q-1}\epsilon(B|_S,x) \ .
	\] 
Modulo a resolution of singularities, we can assume by Lemma~\ref{lem:infinitesimalwidth} that $S$ is everywhere smooth and $R=B|_S$ is big and nef. If $F\subseteq S$ is a curve with $(R\cdot F)=0$, then $F^2<0$ by Hodge index. So, the choice of $x$ implies that there is no such curve through it. Our goal is to show
	\[
	\epsilon(R;x)\cdot \mu(R;x) \ \leq \ (R^2) \ .
	\]
First, let's see how it implies our statement. As $q\geq 2$, $C_x$ moves in a non-trivial family on $S$, even after passing to a resolution of $S$. Now, \cite[Corollary 1.2]{EL93} yields $(C_x^2)\geq q^2-q$. The above inequality and Hodge index would finish the proof, as they imply the following sequence of inequalities
		\[
		\mu(R,x) \ \leq \ \frac{R^2}{\epsilon(R,x)}\ \leq \ \frac{(R\cdot C)^2}{C^2\cdot \epsilon(R,x)} \ \leq \ \frac{(R\cdot C)^2}{(q^2-q)\epsilon(R,x)} \ \leq \ \frac{q}{q-1}\epsilon(R,x) \ ,
		\]
The above inequality can be best seen through the theory of infinitesimal Newton--Okounkov polygons. \cite[Theorem D]{KL18} shows that any such convex set contains the triangle with the vertices $(0,0),(\epsilon(R,x),0)$ and $(\epsilon(R,x),\epsilon(R,x))$. Since $\mu(R,x)$ is the horizontal width of any of these sets, whose areas is $(R^2)/2$ by \cite[Theorem A]{LM09}, this implies the inequality.
\end{proof}

\section{Syzygies vs. singular divisors on abelian manifolds}
In this section we discuss the connection between syzygies and singular divisors on abelian manifolds. The property $(N_p)$ was considered in \cite{LPP11}. For the vanishing of $K_{p,1}(X,L;dL)$ for any $d\geq 1$, we use the ideas from \cite{ELY16} and \cite{LPP11}, to show that on any abelian manifold the vanishing of these Koszul groups is related to singular divisors in the same fashion as for property $(N_p)$.
\begin{theorem}[\textbf{Syzygies vs. singular divisors}]\label{thm:lpp}
	Let $(A,L)$ be a polarized abelian variety and $p\geq 0$ an integer. Suppose there is an effective $\QQ$-divisor $F$ on $A$ satisfying the following properties:
	\begin{enumerate}
		\item $F\ \equiv \ \frac{1-c}{p+2}\cdot L$ for some $0<c<1$. 
		\item The multiplier ideal $\sJ(A,F)$ has zero-dimensional support . 
	\end{enumerate}
	Then $(A,L)$ satisfies property $(N_p)$ and $K_{p,1}(A;L,B) = 0$ for any line bundle $B$, with $B-L$ is ample.
\end{theorem}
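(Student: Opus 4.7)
The plan is to treat the two conclusions in parallel, since both are driven by the same input $F$.

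For property $(N_p)$, the assertion is, up to translating the cosupport of $\sJ(X,F)$ to the origin --- harmless because $X$ is abelian --- precisely the reduction theorem of Lazarsfeld--Pareschi--Popa \cite{LPP11}. I would simply invoke that result; in particular their argument already ensures that $L$ is itself very ample, so that the tautological kernel
\[
M_L\ \deq\ \ker\!\bigl(H^0(X,L)\otimes\sO_X \twoheadrightarrow L\bigr)
\]
is a locally free sheaf.

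For the Koszul vanishing $K_{p,1}(X,L;B)=0$, the first step is to pass to a cohomological reformulation via $M_L$. Combining the standard Koszul resolution with the vanishings $H^i(X,B)=0$ for $i\geq 1$, which hold since $B$ is ample on an abelian variety, gives an inclusion
\[
K_{p,1}(X,L;B)\ \hookrightarrow\ H^1\!\bigl(X,\textstyle\bigwedge^{p+1}M_L\otimes B\bigr),
\]
so the remaining task is to show this $H^1$ vanishes. I would then feed the divisor $F$ into the Pareschi--Popa M-regularity machinery. Writing $B\equiv L+A$ with $A$ ample and pulling $F$ back along the summation map $\sigma:X^{p+2}\to X$, the relation $(p+2)F\equiv(1-c)L$ ensures that $L\boxtimes\cdots\boxtimes L - \sigma^*F$ stays ample on $X^{p+2}$. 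Nadel vanishing for $\sigma^*F$, followed by the standard pushforward along $\sigma$ that realizes $\bigwedge^{p+1}M_L\otimes L$ as a summand of an $\sigma_{*}$-computable sheaf, exhibits $\bigwedge^{p+1}M_L\otimes L$ as M-regular up to an error controlled by $\sJ(X,F)$; but this error is zero-dimensional by hypothesis, so absorbing the residual ample $A=B-L$ and invoking \cite{PP03} yields the M-regularity of $\bigwedge^{p+1}M_L\otimes B$, whence the desired $H^1$-vanishing.

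The main obstacle I anticipate is adjusting the LPP framework --- originally tuned to terminal twists by a further power of $L$ --- to the present asymmetric setup where the terminal twist is an arbitrary $B$ with $B-L$ ample. Concretely, one has to verify that the Koszul bookkeeping on $X^{p+2}$ is compatible with the splitting $B = L + A$, and that the ample residue $A$ can be absorbed into the multiplier-ideal calculation without compromising the positivity needed for Nadel vanishing or for the M-regularity criterion. Once these compatibilities are in place, both halves of the theorem flow from the single hypothesis on $F$ along structurally parallel routes, with $B$ simply replacing the final $L$-twist in the template of \cite{LPP11}.
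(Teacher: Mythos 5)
Your plan is broadly correct for the $(N_p)$ half but has a real gap, and your $K_{p,1}$ half is a genuinely different and unfinished route. For $(N_p)$: the reduction of Lazarsfeld--Pareschi--Popa requires a divisor $F_0 \equiv \frac{1-c}{p+2}L$ with $\sJ(X,F_0)=\m_{X,0}$, the reduced ideal sheaf of the origin. The hypothesis here gives only that $\sJ(X,F)$ has zero-dimensional cosupport, which a priori could be something like $\m_x^2$, or an ideal supported at several points; translating does not fix that. The paper closes this gap with the tie-breaking argument of Lemma~\ref{lem:trick} followed by the observation of Ito recorded in Lemma~\ref{lem:ito}, which converts an effective $\QQ$-divisor with $\textup{lct}<1$ and zero-dimensional LC-locus into one whose multiplier ideal is exactly a reduced maximal ideal $\m_{X,x}$, and only then translates $x$ to the origin. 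That step is essential and is missing from your argument.

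For $K_{p,1}$: you propose to reduce to $H^1\bigl(X,\bigwedge^{p+1}M_L\otimes B\bigr)=0$ and establish this via Pareschi--Popa M-regularity, but the crucial move --- realizing $\bigwedge^{p+1}M_L\otimes L$ as a summand of a pushforward along the summation map, and absorbing the ample residue $B-L$ into the multiplier-ideal computation --- is exactly what you flag as the ``main obstacle'' and do not resolve. The paper avoids M-regularity entirely. It invokes Lemma~1.2 of Ein--Lazarsfeld--Yang, which, given $H^1(X,B)=0$ (free here since $K_X=0$ and $B$ is ample), reduces $K_{p,1}(X;L,B)=0$ to the surjectivity of the restriction map
\[
H^0\bigl(Y, L\boxtimes B^{\boxtimes(p+1)}\bigr)\ \longrightarrow\ H^0\bigl(Z, (L\boxtimes B^{\boxtimes(p+1)})|_Z\bigr)
\]
on $Y=X\times X^{\times(p+1)}$, where $Z=\Delta_{01}\cup\cdots\cup\Delta_{0,p+1}$. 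It then manufactures an effective $\QQ$-divisor $E=\delta^*\bigl(\sum_{i}\pi_i^*F'\bigr)$ on $Y$, with $\delta$ the difference map, and checks that $\sJ(Y,E)=\sI_{Z|Y}$ (using smoothness of $\delta$ and the behaviour of multiplier ideals under smooth pullback and exterior product) and that $L\boxtimes B^{\boxtimes(p+1)}-E$ is ample (using the nef decomposition of $\delta^*(L^{\boxtimes(p+1)})$ from LPP Proposition~1.3 together with $B-L$ ample). Nadel vanishing then gives the surjectivity. This is the content your M-regularity sketch would have to reproduce; adopting the ELY15 criterion is what lets the general twist $B$ go through without reworking the Koszul bookkeeping, which is precisely where your proposal stops short.
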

\begin{remark}
	Using \cite[Remark 1.4 and 1.8]{ELY16} together with the proof of Theorem~\ref{thm:lpp}, we can further prove that under the same above conditions, $L$ is $p$-jet very ample. 
\end{remark}

\begin{proof}[Proof of Theorem~\ref{thm:lpp}]
	First, we tackle property $(N_p)$. By \cite{LPP11} it suffices to find a divisor 
	\begin{equation}\label{eq:specialdivisor}
	F'\equiv\frac{1-c}{p+2}\cdot L \textup{ with }\sJ(A;F')=\m_{A,0} \ . 
	\end{equation}
For $F$ as in the statement, Lemma~\ref{lem:ito} produces another divisor $F''$ with $\sJ(A;F'')=\m_{A,x}$ for some $x\in A$. Setting $F'=F''-x$, then $F'$ satisfies the conditions from \cite{LPP11}. So, property $(N_p)$ holds.
	
For the vanishing of $K_{p,1}$, we assume the existence of an effective divisor $F'$ satisfying $(\ref{eq:specialdivisor})$. Going forward, let $Y=A\times A^{\times (p+1)}$ and $\pi_{ij}:Y\rightarrow A\times A$ the projection of $Y$ to the $i$ and $j$ factor. Let $\Delta_{ij}=\pi_{ij}^*(\Delta)$, where $\Delta\subseteq A\times A$ is the diagonal. Now, define by 
	\[
	Z \ = \ \Delta_{01} \cup \ldots \cup \Delta_{0,p+1} \ \subseteq \ A\times A^{\times (p+1)} \ . 
	\]
First note that $H^1(A,B)=0$ by Kodaira's vanishing, as $B$ is ample and $K_A=0$. So applying \cite[Proposition 1.1]{ELY16}, we can prove that $K_{p,1}(A;L,B)=0$, as long as the restriction map
	\[
	H^0(Y,L\boxtimes B^{\boxtimes (p+1)}) \ \longrightarrow \ H^0(Y, (L\boxtimes B^{\boxtimes (p+1)})|_Z) \ \textup{ is surjective,}
	\]
	where $\boxtimes$ is the tensor product of the pull-backs by the projections to each factor. 
	
We get this by applying Nadel's vanishing \cite[Theorem 9.4.8]{PAG}. To do so, we construct an effective $\QQ$-divisor $E$  on $Y$ from $F'$ with $\sJ(Y;E)=\sI_{Z|Y}$ and $\big(L\boxtimes B^{\boxtimes (p+1)}\big)-E$ is ample. So,	let $\pi_i:A^{\times (p+1)}\rightarrow A$ be the projection to the $i$ factor and define $\delta : A\times A^{\times (p+1)}\rightarrow A^{\times (p+1)}$ by $\delta(x_0,\ldots ,x_{p+1})= (x_0-x_1,\ldots ,x_0-x_{p+1})$. If we define $E\deq \delta^*\Big(\sum_{i=1}^{p+1} \pi_i^*(F')\Big)$, then
	\[
	\sJ(Y,E) = \delta^*\Big(\sJ\Big(A^{\times (p+1)},\sum_{i=1}^{p+1} \pi_i^*(F') \Big)\Big) = \delta^*\Big(\prod_{i=1}^{p+1} \pi_i^*\big(\sJ(A,F')\big)\Big) = \delta^*\Big(\prod_{i=1}^{p+1} \pi_i^*\big(\m_{A,0}\big)\Big)  =  \sI_{Z|Y} .
	\]
	The first equlity follows from \cite[Example 9.5.45]{PAG}, as $\delta$ is a smooth morphism, the second one from \cite[Proposition 9.5.22]{PAG} and the last one by doing computations on a local level.
	
	It remains to show that $(L\boxtimes B^{\boxtimes (p+1)})- E$ is ample. \cite{LPP11}*{Proposition~1.3} yields the equalities 
	\[
	E\ \equiv_{\textup{num}} \ \frac{1-c}{p+2} \delta^*\Big(L^{\boxtimes (p+1)}\Big) \ = \ (1-c)L\boxtimes L^{\boxtimes (p+1)} - \frac{1-c}{p+2}N \ ,
	\]
	where $N$ is some nef class on $Y$. So, then we can write 
	\[
	L\boxtimes B^{\boxtimes (p+1)}-E \ \equiv_{\textup{num}} \ cL\boxtimes (B-L+cL)^{\boxtimes (p+1)} \ + \ \frac{1-c}{p+2}N \ .
	\]
The entire expression is ample, as the first summand is ample, because $B-L$ is so, and $N$ is nef.
\end{proof}

\section{Proofs of the main results}
This section is devoted to the proof of the main results of the paper. We will deal first with Theorem~\ref{thm:main} for higher syzygies, when $p\geq 0$, then deduce as a corollary the strongest possible criterion for base-point freeness, when $p=-1$, and finally finishing with a proof of Corollary~\ref{cor:main}

\subsection{Higher syzygies}

Let $(A,L)$ be a polarized abelian $3$-fold with $L^3> 59(p+2)^3$. Let $\pi:\overline{A}\rightarrow A$ be the blow-up of the origin $0\in A$. Denote by 
\[
B \ \deq \ \frac{1}{p+2} L \ ,
\]
by $B_t\deq\pi^*(B)-tE$ for any $t\geq 0$, $\mu\deq \mu(B,0)$ the infinitesimal width and $\epsilon\deq \epsilon(B,0)$ the Seshadri constant of $B$. Note here that  $\mu>3$, as $B^3\geq 59$.

For an effective $\QQ$-divisor $\overline{D}\equiv B_t$, with $E\nsubseteq\textup{Supp}(\overline{D})$,  its \textit{the push-forward} divisor $D\deq \pi_*(\overline{D})$ is defined as the linear span of the images of all irreducible components of $\overline{D}$ with the appropriate coefficients. In particular, $\overline{D}=\pi^*(D)-\mult_0(D) E$ as effective divisors.
\begin{proof}[Proof of Theorem~\ref{thm:main}]
	By Theorem~\ref{thm:lpp} it suffices to find an effective $\QQ$-divisor $D\equiv B$ so that $\textup{lct}(D)<1$ and $\textup{LC}(D)$ is zero-dimensional. For $t>3$ and any effective divisor $\overline{D}_t\equiv B_t$, not containing $E$ in its support,  the push-forward $D_t=\pi_*(\overline{D}_t)$ has $\textup{lct}(D_t)<1$, by \cite[Proposition 9.3.2]{PAG}. As $\mu>3$, then by Lemma~\ref{lem:trick} we can assume in the following that $D_t$ has all its critical subvarieties to be at least one-dimensional, for any effective divisor $\overline{D}_t\equiv B_t$ with $t>3$.

We divide the proof in a few steps. The first two use inversion of adjunction techniques. Here we either get our syzygetic properties or obtain upper bounds on the Seshadri constant $\epsilon(B,0)$. This and heavy computations on infinitesimal Newton--Okounkov bodies will lead to a contradiction.

\textbf{\textup{Step: 1}} Suppose that for some $\overline{D}_t\equiv B_t$, where $t\in (3,\mu)$, the proper push-forward $D_t$ has a smooth curve as a critical variety. Then either our syzygetic properties hold or $\epsilon(B;0)\leq 6-t$.

Let $C_t$ be the smooth curve and $\overline{C}_t$ its proper transform on $\overline{A}$.  By Lemma~\ref{lem:trick}, there is an effective divisor $D'_t\equiv c'_tB$ satisfying the properties that 
\[
\textup{Zeroes}\big(\sJ(X;D_t')\big) \ = \ C_t \textup{ and  } \sJ(X;(1-\delta)D_t') \ = \ \sO_X \textup{ for any } 0<\delta<1 \ .
\]
This divisor can be chosen as close as one wants to $\textup{lct}(D_t)\cdot D_t$, i.e. the multiplicity of $D_t'$ and $\textup{lct}(D_t')$ can be chosen to be arbitrarily close to those of $D_t$.

We now make use of inversion of adjunction techniques. Assume first the following inequality
\begin{equation}\label{eq:main1}
t\cdot c_t'\ + \ \epsilon\cdot(1-c_t') \ > \ 3 \ .
\end{equation}
Since $D_t'$ can be arbitrarily close to $D_t$, by Proposition~\ref{prop:cuttingdown2} there is an effective $\QQ$-divisor $D_t''\equiv B$ with zero-dimensional LC center, and by Theorem~\ref{thm:lpp} $(A,L)$ would satisfy both syzygetic properties.

Next, assume $(\ref{eq:main1})$ doesn't hold. This yields the following upper and lower bounds:
\begin{equation}\label{eq:main2}
c_t' \ \leq \ \frac{3-\epsilon}{t-\epsilon} \textup{ and } 1-c_t'\ \geq \ \frac{t-3}{t-\epsilon} \ . 
\end{equation}
With this in hand, we again use inversion of adjunction. Assume for this that 
\[
\frac{t-3}{t-\epsilon} (B\cdot C_t) \ > \ 1 \ .
\]
Combining it with $(\ref{eq:main2})$ and Proposition~\ref{prop:cuttingdown1}, we can construct again a new divisor $D_t''$ with $\textup{lct}(D_t'')<1$ and zero-dimensional LC-locus. So, again both syzygetic properties are satisfied.

It remains to study the case when the last inequality doesn't hold. 
If $(B\cdot C_t)\leq 2$, Proposition~\ref{prop:debarre} and the assumptions in the statement lead to a contradiction. Otherwise $(B\cdot C_t)>2$, and together with the inverse of the last inequality above easily imply $\epsilon(B,0) < 6-t$.

\textbf{\textup{Step 2:}} Fix $t\in (3,\mu)$ and suppose that for some divisor $\overline{D}_t\equiv B_t$ the proper push-forward $D_t$ has a surface $S_t\subseteq X$ as a critical variety. Then either we go back to Step~1 or $\textup{mult}_0(S_t)=2$.

By Step~1, we can assume that for any $\overline{D}_{t}\equiv B_{t}$, the critical variety of $D_t$ is a surface. By Lemma~\ref{lem:multiplier} and \ref{lem:multiplicity}, there is then a surface $\overline{S}_t\subseteq \Bplus(B_t)$, with its image $S_t$ being the log-canonical center of a general choice of $D_t$ with $\mult_0(D_t)=t$. Then \cite[Proposition 9.5.13]{PAG} yields
\begin{equation}\label{eq:boundmultiplicity2}
\mult_{\overline{S}_t}\big(||B_{t}||\big) \ \geq \ t/3 \ > \ 1 \ . 
\end{equation}
If $S_t$ is abelian, then $(\ref{eq:boundmultiplicity2})$ implies that $B-S_t$ is ample. So, applying Proposition~\ref{prop:inductive} and Theorem~\ref{thm:lpp}, we deduce that the pair $(A,L)$ satisfies the syzygetic properties.

If $S_t$ is not abelian, then Lemma~\ref{lem:nakayama} forces $\mult_0(S_t)\geq 2$. To prove equality, we apply $(\ref{eq:boundmultiplicity2})$ with Proposition~\ref{prop:nobodies} to the Newton--Okounkov body $\tilde{\Delta}_{Y_{\bullet}}(B)$, where $Y_{\bullet}$ is a general infinitesimal flag. Since $\mult_0(S)\geq 3$, this implies $\mu(B,0)\leq t$, contradicting our choice of $t\in(3,\mu)$.

\textbf{\textup{Step 3:}} The bad cases of Step~$1$ and $2$ contradict $B^3>59$. 

Our goal is to translate the "bad" cases from Step~$1$ and $2$ into strong conditions on the convex set $\tilde{\Delta}_{Y_{\bullet}}(B)$, for some fixed generic infinitesimal flag $Y_{\bullet}$. Step $1$ provides an upper bound on $\epsilon(B,0)$. Otherwise, for any $\overline{D}_t\equiv B_t$, the  LC-locus of $D_t$ contains a surface with multiplicity $2$ at the origin. Let $t_1\in (3,\mu]$ be where this starts happening. By Lemma \ref{lem:multiplier} and \ref{lem:multiplicity}, there is a surface $\overline{S}_1\subseteq\Bplus(B_{t_1})$, which works for general divisors. So, \cite[Proposition 9.5.13]{PAG} and Proposition~\ref{prop:nakamaye} imply 
  \begin{equation}\label{eq:main3}
  \textup{mult}_{\overline{S}_1}(||B_{t_1+r}||) \ \geq \ t_1/3+r, \textup{ for any }r\geq 0 \ .  
  \end{equation}
By Step $1$ and the definition of $t_1$, we also know the following upper bound
 \begin{equation}\label{eq:main4}
 t_1\ \leq \ 6 -\epsilon(B,0) \ .
 \end{equation}
Finally, besides $S_1$, the image of $\overline{S}_1$, there is a second surface $S_2$ that appears. If $\epsilon(B,0)$ is defined by a surface, then this is $S_2$. When it is defined by a non-elliptic curve $C\subseteq X$ with $q\deq \textup{mult}_0(C)\geq 1$, set $S_2\deq C+C$, otherwise there is no $S_2$. When $C$ is non-degenerate, Lemma~\ref{lem:uppermu} yields the inequalities
\begin{equation}\label{eq:main5}
t_{S_2} \ \leq \ 2\epsilon(B,0) \ \textup{and} \ \mu(B,0) \ \leq 3\epsilon(B,0) \ . 
\end{equation}
Using $(\ref{eq:main3})$, $(\ref{eq:main4})$, and $(\ref{eq:main5})$ we divide the proof in three cases based on the size of $\epsilon(B,0)$.  

\textbf{\textup{Case 3.1:}} $0\ < \ \epsilon(B;0) \ \leq  \ 1.5$.

We assume $\epsilon(B,0)$ is defined by a curve $C\subseteq A$, as the arguments below would still work by taking a curve with data very close to $\epsilon(B;0)$. Let's show first that our initial assumptions force $C$ to be non-degenerate and $q=\textup{mult}_0(C)\geq 5$. If $C$ is elliptic, the bounds on $\epsilon(B;0)$ yield $(B\cdot C)\leq 1.5$, which is not possible. If $S_2$ is abelian, \cite{KSS09} yields $C^2\geq q^2-q+2$. So Hodge index implies
\[
(B^2\cdot S_2)  \ \leq \ \frac{(B\cdot C)^2}{C^2} \ \leq \ \frac{q^2\cdot \epsilon(B,0)^2}{q^2-q+2} \ \leq \ (1.5)^2\cdot \frac{8}{7} \ < \ 4 \ ,
\]
again contrdicting the assumptions in the main statement. As $C$ is non-degenerate, the upper bound on $\epsilon(B,0)$, the inequality from \cite{D94}, used in the proof of Proposition \ref{prop:debarre}, and $B^3>59$ force $q\geq 5$. 

Now, we describe the upper bounds on the vertical slices of $\Delta_{Y_{\bullet}}(B)$. We use Theorem~\ref{thm:propertiesnobody} on $[0,\epsilon]\times\RR^2$ and Theorem~\ref{thm:abelian} on $[\epsilon,3\epsilon]\times\RR^2$  with $q=5$, as for higher multiplicity these bounds are stronger. Applying Theorem~\ref{thm:propertiesnobody} and Fubini's theorem yields the inequalities
\[
\frac{58}{6} \ < \ \textup{vol}_{\RR^3}\big(\tilde{\Delta}_{Y_{\bullet}}(B)\big)\ \leq \ \int_0^{\epsilon}\frac{t^2}{2} dt \ + \ \Big(\int_{\epsilon}^{\frac{5}{4}\epsilon} \frac{t^2-5(t-\epsilon)^2}{2} dt \ + \ \int_{\frac{5}{4}\epsilon}^{3\epsilon}\frac{5}{8}\epsilon^2 dt\Big) \  .
\]
The right side is $45\epsilon^3/32$ and the inequality cannot hold for $\epsilon\in (0,1.5)$, leading to a contradiction.

\textbf{\textup{Case 3.2:}} $1.5\ < \ \epsilon(B,0) \ <  \ 2$.

The same proof, as the start of Case~3.1, implies that our bounds on $\epsilon(B,0)$ force either the curve $C$ to be non-degenerate with $q=\textup{mult}_0(C)\geq 4$, or $S_2=C+C$ is an abelian surface with 
\[
4 \ < \ (B^2\cdot S_2) \ \leq \ 32/7 \ .
\]
Let's discard first the latter case. When $q=1$, Hodge index on $S_2$ does so, as $(B\cdot C)<2$, $(B^2\cdot S_2)\geq 4$ and $C^2\geq 2$, because $C$ is not elliptic. When $q\geq 2$, we have additionally the upper bounds
\[
t_{S_2} \ \leq \ (8\epsilon)/7\textup{ and } \mu(B,0) \ \leq \ (8\epsilon)/7+1 \ .
\]
The first one is due to Remark~\ref{rem:abelian} and the second is due to $\mu-t_{S_2}\leq 1$, as otherwise $B-S_2$ is ample and Proposition~\ref{prop:inductive} and Theorem~\ref{thm:lpp} imply our syzygetic properties. In particular, these upper bounds force $\mu\leq 23/7$, as $\epsilon<2$, which contradicts the condition that $B^3\geq 59$.

Now, let $C$ be non-degenerate with $q\geq 4$. Thus, $S_2$ is not abelian and as in the previous paragraph, Proposition \ref{prop:seshadriwidth} implies that $S_2$ is singular along $C$ and hence not normal. Moreover, $S_1$ is singular at the origin and normal, as an LC-locus. Hence,  $S_1\neq S_2$ and by $(\ref{eq:main5})$ and $(\ref{eq:main4})$, we get 
\[
t_1 \ \leq 6-\epsilon \textup{ and } t_{S_2}\ \leq \ 2\epsilon \ .
\]
On the other hand, by $(\ref{eq:main3})$ and Proposition~\ref{prop:nakamaye}, for any $\mu\geq t\geq 6-\epsilon$ we also get the inequalities
\[
\mult_{\overline{S}_1}(||B_t||)\ \geq t-4+(2\epsilon)/3  \ \textup{ and } \ \mult_{\overline{S}_2}(||B_t||)\ \geq t-2\epsilon \ .
\]
In this range $t-2\epsilon\geq 0$. As both $S_1$ and $S_2$ are singular at $0\in A$, Proposition~\ref{prop:nobodies} yields the inclusion
\[
\tilde{\Delta}_{Y_{\bullet}}(B) \ \cap \ \{t\}\times\RR^2 \ \subseteq \ \textup{convex hull}\{(t,0,0),(t,x_t,0),(t,0,x_t)\}  \ , \ \forall \ t\geq 6-\epsilon \ ,
\]
where $x_t= (24+8\epsilon-9t)/3$. Moreover, as $x_t$ has to be positive, then $\mu\leq (24+8\epsilon)/9$. 

For the upper bounds of the vertical slices of $\tilde{\Delta}_{Y_{\bullet}}(B)$, we use Theorem~\ref{thm:propertiesnobody} on $[0,\epsilon]\times\RR^2$  and Theorem~\ref{thm:abelian} on $[\epsilon,6-\epsilon]\times\RR^2$ ($q\geq 4$). Also, let $A_{\epsilon}=1$, when the region $[6-\epsilon,(24+8\epsilon)/9]\times\RR^2$ is non-trivial, i.e. $\epsilon\geq 30/17$,  and $A_{\epsilon}=0$ otherwise. Applying Fubini's theorem, these yield
\[
\frac{58}{6}  \leq \int_0^{\epsilon}\frac{t^2}{2} dt + \int_{\epsilon}^{\frac{4\epsilon}{3}} \frac{t^2-4(t-\epsilon)^2}{2} dt + \int_{\frac{4\epsilon}{3}}^{6-\epsilon}\frac{2\epsilon^2}{3} dt + A_{\epsilon}\cdot  \int_{6-\epsilon}^{\frac{24+8\epsilon}{9}}\frac{(24+8\epsilon-9t)^2}{18} dt  .
\] 
The last integral is at most $\frac{32}{243}$, attained at $\epsilon=2$. So, algebraic computations yield the inequality $32+972\epsilon^2-284\epsilon^3 > 2349$, which does not hold for $\epsilon\in[1.5,2]$, getting the desired contradiction.

\textbf{\textup{Case 3.3:}} $2\ \leq \ \epsilon(B;0) \ < \ 3$.

Here, the Seshadri exceptional curve $C$ (or surface) might be elliptic (abelian). So, we cannot rely on $S_2$, but the non-abelian surface $S_1$ will suffice. To start, as in Case~3.2 we have the inequalities
\[
t_1 \ \leq \ 6-\epsilon \textup{ and } \mu \ \leq \ 8-4\epsilon/3 \ .
\]
The first one is the same as $(\ref{eq:main4})$. The second one follows from $(\ref{eq:main3})$ and Proposition~\ref{prop:nobodies}.

Going back to the set $\tilde{\Delta}_{Y_{\bullet}}(B)$, we use Theorem~\ref{thm:propertiesnobody} on $[0,\epsilon]\times\RR^2$, and Proposition~\ref{prop:nobodies}, for the curve $C$, on $[\epsilon,6-\epsilon]$. On $[6-\epsilon,8-4\epsilon/3]\times\RR^2$ we apply again Proposition~\ref{prop:nobodies}, taking into account that $\mult_0(S_1)=2$ and $(\ref{eq:main3})$. By Fubini's theorem, all this data yields the inequalities
\[
\frac{59}{6} \ < \ \textup{vol}_{\RR^3}\Big(\tilde{\Delta}_{Y_{\bullet}}(B)\Big)\ \leq \ \int_0^{\epsilon}\frac{t^2}{2} dt \ + \ \int_{\epsilon}^{6-\epsilon} \frac{t^2-(t-\epsilon)^2}{2} dt   \ + \ \int_{6-\epsilon}^{8-\frac{4\epsilon}{3}} \frac{(8-\frac{4\epsilon}{3}-t)^2}{2} dt \ ,
\]
that imply $59 <(6-\epsilon)^3-(6-2\epsilon)^3 + (2-\epsilon/3)^3$. Using the derivative trick, the right hind-side is maximal at $\epsilon=2$, and its value there contradicts the inequality and we are done.
\end{proof}

\subsection{Globally generatedness}

We give a numerical criterion for an ample line bundle on an abelian $3$-fold to be base point-free. In particular, it proves Conjecture \ref{conj:main1} for $p=-1$ in dimension three.
\begin{theorem}\label{thm:globally}
	Let $(A,L)$ be a polarized abelian $3$-fold with $L^3>27$. The following are equivalent:
	\begin{enumerate}
		\item $L$ is globally generated.
		\item For any abelian subvariety $A'\subseteq A$ of dimension $d$ we have $(L^d\cdot A')>d^d$.
	\end{enumerate}
\end{theorem}
	\begin{example}{(\textbf{Line bundles with base points})}
Asymptotic Riemann-Roch on abelian three-folds yields $(L^3)=6h^0(A,L)$. So, Theorem \ref{thm:globally} doesn't cover when $(L^3)=6,12,18,24$. In the first three cases, if $L$ would be globally generated, it would define a morphism to a corresponding $\PP^r$ with $r=0,1,2$. So, this morphism would contract curves. And this is not possible as $L$ is ample. 
		
When $L^3=24$, a polarization of type $(1,1,4)$ can be either globally generated or not by \cite[Proposition 2 and Remark 3]{DHS94}, a phenomena better explained in terms of moduli of such pairs. Moreover, a polarization of type $(1,2,2)$ has always base points by \cite[Proposition 2.5]{NR95}.
	\end{example}
\begin{remark}
Due to the previous example, we don't believe Conjecture \ref{conj:main1} for $p=-1$ to be an equivalence in higher dimension. This behaviour does not happen in dimension one and two. Moreover, base-point freeness is easier to tackle than higher syzygies, as it is implied by the existence of divisors, which locally (not globally) have a zero-dimensional log-canonical center.
\end{remark}

\begin{proof}[Proof of Theorem \ref{thm:globally}]
$(1)\Longrightarrow (2)$ Let $A'\subseteq A$ be an abelian subvariety. The line bundle $L'=L|_{A'}$ is ample and base point free. So, when $A'$ is an elliptic curve, the properties of $L'$ force it to be of degree at least two. When $A'$ is an abelian surface and $L'^2\leq 4$, then $L'$ has at most two global sections. So, to be free, it would contract curves, and this is not possible as $L'$ is ample. 
	
$(2)\Longrightarrow (1)$ Using the ideas and notation from the proof of Theorem \ref{thm:main}, by asymptotic Riemann-Roch $L^3=6k$, for some $k\geq 5$. Thus $\mu(L;0)>3$ and for a general effective $\QQ$-divisor $\overline{D}\equiv B_t$ for any $t\in (3,\mu)$, then $\textup{lct}(D)<1$, where $D\equiv L$ is its pushforward. If $\textup{LC}(D)$ is zero-dimensional, then the ideas from the first part of the proof of \cite[Theorem 10.4.2]{PAG} imply $L$ is globally generated.

Let $\textup{LC}(D)$ be a smooth curve $C$. If $(L\cdot C)\geq 3$, by \cite[Proposition 3.2]{H97} we can find a new divisor with zero-dimensional LC-locus, and we are done. So, let $(L\cdot C)=1,2$. If $C$ is non-degenerate, as in Proposition \ref{prop:debarre}, \cite{D94} leads to a contradiction. If $S=C+C$ is abelian, Hodge index yields 
	\[
2\cdot (L^2\cdot S)\ \leq \ 	(L^2\cdot S)\cdot C^2 \ \leq \ (L\cdot C)^2 \ = \ 4 \ ,
	\]
	as $C$ is not elliptic, again leading to a contradiction to our initial asumptions.

So, we assume $C$ is elliptic. If $(L\cdot C)=1$, \cite[Lemma 1]{DH07} forces $(A,L)$ to be decomposable, and our initial conditions imply easily globally generatedness. When $(L\cdot C)=2$, first note that $L|_C$ is base point free. Furthermore, it it is not hard to see that generically the multiplier ideal of $\textup{lct}(D)\cdot D$ is equal to the ideal $\sI_{C|A}$. In particular, we have the following exact sequence
	\[
	0\ \rightarrow \ \sJ(A;\textup{lct}(D)\cdot D)\otimes L\ \rightarrow \ \sI_{C|A}\otimes L \ \rightarrow \ \mathscr{K}\otimes L \ \rightarrow \ 0  
	\]
	where $\mathscr{K}\otimes L$ has zero-dimensional support. As a consequence of this and Nadel vanishing we have $H^1(A, \sI_{F|A}\otimes L)=0$. Hence, the restriction map $H^0(A,L)\rightarrow H^0(C, L|_C)$ is surjective. But $L|_C$ is globally generated, so we can find a section of $L$ on $A$ that does not vanish at the origin. This implies $L$ is base-point free, as we can do the same for any point, as loing as we move $D$ around.
	
Finally, let the LC-center of $D$ be a surface $S\subseteq A$. If $S$ is abelian then Proposition \ref{prop:inductive} and our assumptions in the statement imply easily the statement. When $S$ is not abelian, by Lemma \ref{lem:nakayama} $\mult_0(S)\geq 2$ and as in the proof of Theorem \ref{thm:main}, we can assume further
\[
\mult_{\overline{S}}\big(||B_t||\big) \ \geq \ t/3 \ ,
\]
where $\overline{S}$ is the proper transform on the blow-up $\overline{A}$.

Now, translating this data to the convex set $\tilde{\Delta}_{Y_{\bullet}}(L)$, for a very generic flag $Y_\bullet$, and taking $t\rightarrow 3$, we get by Proposition \ref{prop:nobodies} the following inclusion
\[
\tilde{\Delta}_{Y_{\bullet}}(L) \ \bigcap \ \{3\}\times \RR^2 \ \subseteq \ \{(x,y)\ | \ 0\leq x,y, x+y\leq 1\} \ .
\]
This inclusion and convexity force $\tilde{\Delta}_{Y_{\bullet}}(L)$ to sit below a plane through  $(3,1,0)$ and $(3,0,1)$, which intersects the diagonal plane in a line through $(x,x,0)$ and $(x,0,x)$. By Theorem \ref{thm:propertiesnobody}  and \cite{N96}, we have $3>x\geq \epsilon(L,0)>1$ and we divide the proof in two cases, based on the magnitude of $x$.

Consider first $x\in [2,3)$. Our plane intersects at $(2x/(x-1),0,0)$ the horizontal line. So, $\tilde{\Delta}_{Y_{\bullet}}(L)$ is contained in the convex set generated by this point, $(0,0,0)$,$(x,x,0)$, and $(x,0,x)$. Using Fubini theorem, this inclusion gives rise to the following inequality
\[
30/6 \ \leq \ \textup{vol}_{\RR^3}\big(\tilde{\Delta}_{Y_{\bullet}}(L)\big) \ \leq \ x^3/(3x-3) \ ,
\]
which is not hard to see that doesn't hold for $x\in [2,3)$.

Consider $x\in(1,2)$. The same happens for $\epsilon(L,0)$. We assume $\epsilon(L;0)$ is defined by a curve, as the argument below still works when taking a limit. We show first that this curve is non-degenerate. If it is elliptic we get a contradiction, as $L$ is a line bundle. If it generates an abelian surface, our initial assumptions, Proposition \ref{prop:inductive} and \ref{prop:abelian surface} lead again to a contradiction. 

Consequently, using $\overline{S}$ and the Seshadri curve in Corollary \ref{cor:width}, we get $\mu(L,0)\leq 2+x$. Finally, the same exact ideas as in the previous case lead to the inequality
\[
\frac{30}{6} \ \leq \ \textup{vol}_{\RR^3}\big(\tilde{\Delta}_{Y_{\bullet}}(L)\big) \ \leq \ \int_0^{x}\frac{t^2}{2} dt \ + \ \int_{x}^{2+x} \frac{\big(\frac{x-1}{x-3}t-\frac{2x}{x-3}\big)^2}{2} dt   \  .
\]
Algebraic manipulation translates it into $0\leq x^5-15x^3-16x^2-200x-262$, which can be checked to not hold for $x\in (1,2)$. So, we finish the proof. 
	\end{proof}

\subsection{Singular divisors vs. abelian subvarieties}
In this subsection we present a proof of Corollary \ref{cor:main} on the existence of effective divisors, whose singularity locus is an abelian subvariety.
\begin{proof}[Proof of Corollary~\ref{cor:main}]
By the proof of Theorem~\ref{thm:main}, we can assume that for any $t\geq 3$ a general divisor $\overline{D}_t\equiv B_t$ has $\textup{LC}(D_t)$ to be either a non-elliptic curve $C_t$ or a non-abelian surface. Let $t_1$ be the maximum $t\in (3,\mu)$ with $\textup{LC}(D_t)$ being one-dimensional. So,  $\Bplus(B_t)$ for any $t>t_1$ contains a surface $S_t$, which is the LC-locus of a general $D_t$. By Lemma~\ref{lem:nakayama}  $S_t$ is singular at the origin.

Fix $t\in [3,t_1)$. Our first goal is to show that the bad case implies $(B\cdot C_t)\geq 5$. Let $C_t$ be degenerate and $S=C_t+C_t$ is an abelian surface. If $B-S$ is ample, by Proposition~\ref{prop:abelian surface} we are done. Otherwise, Hodge index on $S$, Proposition~\ref{prop:abelian surface} and $C_t^2\geq 2$, as $C_t$ is not elliptic,
\[
(B\cdot C_t)^2 \ \geq \ (B^2\cdot S)\cdot {C_t^2} \ \geq (2B^3)/3 \ > \ 25 \ .
\]
When $C_t$ is non-degenerate, then \cite{D94}, as in Proposition \ref{prop:debarre}, again implies $(B\cdot C_t)\geq 5$.

Now, inversion of adjunction techniques as in Step 1 from the proof of Theorem~\ref{thm:main} imply that either there is a divisor, whose LC-locus is the origin, or there is an upper bound of $(B\cdot C_t)$. With the above lower bound, and the fact that this holds for any $t<t_1$, we obtain the inequalities
\begin{equation}\label{eq:main7}
0 \ < \ \epsilon(B;0) \ \leq \ 15-4t_1 \ .
\end{equation}
In particular, $t_1< 3.75$ and we divide the proof in two parts based on the range of $t_1$. 

\textbf{\textup{Case 1:}} $3.5\leq t_1< 3.75$ ($\Rightarrow \ 0<\epsilon \leq 1$).

Suppose first that $\epsilon(B,0)$  is defined by a surface $S$. If it is abelian, then we would be done by Proposition~\ref{prop:abelian surface}.  Otherwise, Corollary~\ref{cor:width} forces $\mu(B;0)\leq 2$, contradicting $B^3>40$.

Thus, we can assume $\epsilon(B,0)$ is defined by a curve $C$. Let $C_3$ be a non elliptic curve in the LC-locus of a general divisor $D_3$. Thus, $S=C+C_3$ is a surface. Moreover, Proposition~\ref{prop:nakamaye} yields
\[
\mult_{\overline{C}_3}(||B_{t}||) \ \geq \ t-2, \textup{ and } \mult_{\overline{C}}(||B_{t}||) \ \geq \ t-1 \ ,
\]
for any $t\geq 3$. But then applying $(\ref{eq:main7})$ and Lemma~\ref{lem:uppermu}, this yields
\[
\mult_{\overline{S}}(||B_t||) \ \geq \ 4t_1+t-17 \ , \textup{ for any }\  3\leq t\leq t_1 \ .
\]
In particular, $\mult_{\overline{S}}(||B_4||)\geq 1$. So, if $S$ is abelian, Proposition~\ref{prop:abelian surface} yields $\mu(B;0)\leq 4$. Otherwise, $\mult_0(S)\geq 2$, by Lemma~\ref{lem:nakayama} and $S+C=X$. By Proposition~\ref{prop:nobodies}, again we get $\mu(B;0)\leq 4$.

Now, as usual we divide $\tilde{\Delta}_{Y_{\bullet}}(B)$ in regions. On $[0,15-4t_1] \times\RR^2$ we use Theorem~\ref{thm:propertiesnobody}, on $[15-4t_1,3]\times \RR^2$ Theorem \ref{thm:abelian} for the curve $C$,  and Proposition \ref{prop:decreasing} on $[3,t_1]\times\RR^2$ for surface $S$, and on $[t_1,4]\times\RR^2$ for $S_{t_1}$. As $\mult_0(S_1)=2$, then Fubini-s theorem yield the following inequalities
\[
\frac{40}{6} < \textup{vol}_{\RR^3}\big(\tilde{\Delta}_{Y_{\bullet}}(B)\big) \leq  \int_0^{15-4t_1}\frac{t^2}{2} dt  +  \int_{15-4t_1}^{3} \frac{t^2-(t-(15-4t_1))^2}{2} dt   +  \int_{3}^{t_1} \frac{9}{2} dt  +   \int_{t_1}^{4} \frac{(\frac{4t_1}{3}-t)^2}{2} dt \ .
\]
Going further, algebraic computation lead to a much simpler one
\[
40 \ < \  27+t_1^3/27\ + \ (t_1-3)\big(27-64(t_1-3)^2-64(t_1-3)^2/27\big)  \ .
\]
As $(t_1-3)$ is maximal at $t_1=3.5$ and the second term when $t_1=3.75$, yield a contradiction.

\textbf{\textup{Case 2:}} $3<t_1< 3.5$ ($\Rightarrow \ 1<\epsilon< 3$).

Here we use only the surface $S_1$. As in Case~3.2 in the proof of Theorem~\ref{thm:main}, we can prove
\[
\mu(B, 0) \ \leq \ 4t_1/3 \ .
\]
So, for the set $\tilde{\Delta}_{Y_{\bullet}}(B)$, we use Theorem~\ref{thm:propertiesnobody} on $[0,15-4t_1] \times\RR^2$. On $[15-4t_1,t_1]\times \RR^2$ Theorem \ref{thm:abelian} for the curve $C$, and Proposition~\ref{prop:nobodies} on $[t_1,\frac{4t_1}{3}]\times\RR^2$ for the surface $S_1$, which is singular at the origin. So, Fubini's theorem implies the following inequalities
\[
\frac{40}{6} \ < \ \textup{vol}_{\RR^3}\Big(\tilde{\Delta}_{Y_{\bullet}}(B)\Big)\ \leq \ \int_0^{15-4t_1}\frac{t^2}{2} dt \ + \ \int_{15-4t_1}^{t_1} \frac{t^2-(t-(15-4t_1))^2}{2} dt   \ + \ \int_{t_1}^{\frac{4t_1}{3}}  \frac{(\frac{4t_1}{3}-t)^2}{2} dt \ .
\]
Algebraic computations leads then to a much simpler one $40<t_1^3-(5t_1-15)^3+ t_1^3/27$. This doesn't hold for $t_1\in(3,3.5)$, so we finish the proof of the corollary.
\end{proof}

\begin{bibdiv}
\begin{biblist}

\bib{A17}{article}{
   author={Agostini, Daniele},
   title={Asymptotic syzygies and spanned line bundles},
   journal={online},
   url={http://arxiv.org/abs/1706.03508},
}

\bib{AKL19}{article}{
   author={Agostini, Daniele},
   author={K\"{u}ronya, Alex},
   author={Lozovanu, Victor},
   title={Higher syzygies of surfaces with numerically trivial canonical
   bundle},
   journal={Math. Z.},
   volume={293},
   date={2019},
   number={3-4},
   pages={1071--1084},

}

\bib{AS95}{article}{
	author={Angehrn, Urban},
	author={Siu, Yum Tong},
	title={Effective freeness and point separation for adjoint bundles},
	journal={Invent. Math.},
	volume={122},
	date={1995},
	number={2},
	pages={291--308},
}

\bib{AF11}{article}{
   author={Aprodu, Marian},
   author={Farkas, Gavril},
   title={Green's conjecture for curves on arbitrary $K3$ surfaces},
   journal={Compos. Math.},
   volume={147},
   date={2011},
   number={3},
   pages={839--851},

}

\bib{AFPRW19}{article}{
   author={Aprodu, Marian},
   author={Farkas, Gavril},
   author={Papadima, \c{S}tefan},
   author={Raicu, Claudiu},
   author={Weyman, Jerzy},
   title={Koszul modules and Green's conjecture},
   journal={Invent. Math.},
   volume={218},
   date={2019},
   number={3},
   pages={657--720},

}

\bib{BMS16}{article}{
   author={Bayer, Arend},
   author={Macr\`\i, Emanuelle},
   author={Stellari, Paolo},
   title={The space of stability conditions on abelian threefolds, and on some Calabi-Yao threefolds},
   journal={Invent. Math.},
   volume={206},
   date={2016},
   number={3},
   pages={869--933},

}

\bib{B}{article}{
	author={Boucksom, S\'ebastien},
	title={Corps D'Okounkov},
	journal={S\'eminaire Bourbaki},
	volume={65},
	date={2012},
	number={1059},
	pages={1--38},
}

\bib{CN14}{article}{
	author={Cascini, Paolo},
	author={Nakamaye, Michael},
	title={Seshadri constants on smooth threefolds},
	journal={Adv. Geom.},
	volume={14},
	date={2014},
	number={1},
	pages={59--79},
}

\bib{CHPW18}{article}{
   author={Choi, Sung Rak},
   author={Hyun, Yoonsuk},
   author={Park, Jinhyung},
   author={Won, Joonyeong},
   title={Asymptotic base loci via Okounkov bodies},
   journal={Adv. Math.},
   volume={323},
   date={2018},
   pages={784--810},
}

\bib{D94}{article}{
   author={Debarre, Olivier},
   title={Degrees of curves in abelian varieties},
   journal={Bull. Soc. Math. France},
   volume={122},
   date={1994},
   number={3},
   pages={343--361},

}

\bib{D99}{book}{
	author={Debarre, Olivier},
	title={Tores et vari\'et\'es ab\'eliennes complexes},
	series={Cours Sp\'ecialis\'es},
	volume={6},
	publisher={Soci\'et\'e Math\'ematique de France, Paris; EDP Sciences, Les Ulis},
	date={1999},
	pages={vi+125},
	isbn={2-86883-427-2}
}

\bib{DH07}{article}{
	author={Debarre, Olivier},
	author={Hacon, Christopher D.},
	title={Singularities of divisors of low degree on abelian varieties},
	journal={Manuscripta Math.},
	volume={122},
	date={2007},
	number={2},
	pages={217--228},

}

\bib{DHS94}{article}{
   author={Debarre, O.},
   author={Hulek, K.},
   author={Spandaw, J.},
   title={Very ample linear systems on abelian varieties},
   journal={Math. Ann.},
   volume={300},
   date={1994},
   number={2},
   pages={181--202},

}

\bib{E97}{article}{
   author={Ein, Lawrence},
   title={Multiplier ideals, vanishing theorems and applications},
   conference={
           title={Algebraic geoemtry---Santa Cruz 1995},
   },
   book={
          series={Proc. Sympos. Pure Math.},
         volume={62},
         publisher={Amer. Math. Soc., Providence, RI},
    },
   date={1997},
   pages={203--219},

}

\bib{EKL95}{article}{
   author={Ein, Lawrence},
   author={K{\"u}chle, Oliver},
   author={Lazarsfeld, Robert},
   title={Local positivity of ample line bundles},
   journal={J. Differential Geom.},
   volume={42},
   date={1995},
   number={2},
   pages={193--219},

}

\bib{EL93}{article}{
	author={Ein, Lawrence},
	author={Lazarsfeld, Robert},
	title={Global generation of pluricanonical and adjoint linear series on
		smooth projective threefolds},
	journal={J. Amer. Math. Soc.},
	volume={6},
	date={1993},
	number={4},
	pages={875--903},

}

\bib{EL97}{article}{
   author={Ein, Lawrence},
   author={Lazarsfeld, Robert},
   title={Singularities of theta divisors and the birational geometry of irregular varieties},
   journal={J. Amer. Math. Soc.},
   volume={10},
   date={1997},
   number={1},
   pages={243--258},

}

\bib{EL15}{article}{
   author={Ein, Lawrence},
   author={Lazarsfeld, Robert},
   title={The gonality conjecture on syzygies of algebraic curves of large degree},
   journal={Publ. Math. Inst. Hautes \'Etudes Sci.},
   volume={10},
   number={212},
   date={2015},
   pages={301--313},

}

\bib{EL18}{article}{
   author={Ein, Lawrence},
   author={Lazarsfeld, Robert},
   title={Syzygies of projective varieties of large degree: recent progress
   and open problems},
   conference={
      title={Algebraic geometry: Salt Lake City 2015},
   },
   book={
      series={Proc. Sympos. Pure Math.},
      volume={97},
      publisher={Amer. Math. Soc., Providence, RI},
   },
   date={2018},
   pages={223--242},
}

\bib{ELMNP06}{article}{
   author={Ein, Lawrence},
   author={Lazarsfeld, Robert},
  author={Musta\c t\u a, Mircea},
   author={Nakamaye, Michael},
   author={Popa, Mihnea},
   title={Asymptotics invariants of base loci},
   journal={Ann. Inst. Fourier (Grenoble)},
   volume={56},
   date={2006},
   number={6},
   pages={1701--1734},

}

\bib{ELN94}{article}{
   author={Ein, Lawrence},
  author={Lazarsfeld, Robert},
   author={Nakamaye, Michael},
   title={Zero-estimates, intersection theory, and a theorem of Demailly},
   conference={
           title={Higher-dimensional complex varieties},
           address={Trento},
           date={1994},
   },
   book={
          series={de Gruyter, Berlin},
    },
   date={1996},
   pages={183--207},

}

\bib{ELY16}{article}{
   author={Ein, Lawrence},
   author={Lazarsfeld, Robert},
   author={Yang, D.},
   title={A vanishing theorem for weight one syzygies},
   journal={Algebra and Number Theory},
  volume={10},
   date={2016},
   number={9},
   pages={1965--1981},
}

\bib{FK16}{article}{
   author={Farkas, Gavril},
   author={Kemeny, Michael},
   title={The generic Green-Lazarsfeld secant conjecture},
   journal={Invent. Math.},
   volume={203},
   date={2016},
   number={1},
   pages={265--301},

}

\bib{G84}{article}{
  Title={Koszul cohomology and the geometry of projective varieties - I},
  Author={Green, Mark},
  Journal={J. Diff. Geom.},
  Year={1984},
  Pages={125--171},
  Volume={19},
}

\bib{GL86}{article}{
  Title={On the projective normality of complete linear series on an algebraic curve.},
  Author={Green, Mark},
  author={Lazarsfeld, Robert},
  Journal={Invent. Math.},
  volume={83},
  Year={1986},
 number={1},
  Pages={73--},
}

\bib{I18}{article}{
   author={Ito, Atsushi},
   title={A remark on higher syzygies on abelian surfaces},
   journal={Comm. Algebra},
   volume={46},
   date={2018},
   number={12},
   pages={5342--5347},

}

\bib{Har}{book}{
 author={Hartshorne, Robin},
 title={Algebraic geometry},
 series={Graduate Texts in Mathematics},
 volume={52},
 publisher={Springer-Verlag},
 place={New York},
 date={1977},
 pages={xvi+496},
 }

\bib{H97}{article}{
	author={Helmke, Stefan},
	title={On Fujita's conjecture},
	journal={Duke Math. J.},
	volume={88},
	date={1997},
	number={2},
	pages={201--216},
	issn={0012-7094},
	review={\MR{1455517}},
}

\bib{HT11}{article}{ 
	author={Hwang, Jun-Muk},
	author={To, Wing-Keung},
	title={Buser-Sarnak invariant and projective normality of abelian
		varieties},
	conference={
		title={Complex and differential geometry},
	},
	book={
		series={Springer Proc. Math.},
		volume={8},
		publisher={Springer, Heidelberg},
	},
	date={2011},
	pages={157--170},
}

\bib{KK12}{article}{
	author={Kaveh, Kiumars},
	author={Khovanskii, Askold},
	title={Newton--Okounkov bodies, semigroups of integral points, graded algebras and intersection theory},
	journal={Annals of Mathematics},
	volume={176},
	date={2012},
	pages={925--978},
}

\bib{Ka97}{article}{
	author={Kawamata, Yujiro},
	title={On Fujita's freeness conjecture for $3$-folds and $4$-folds},
	journal={Math. Ann.},
	volume={308},
	date={1997},
	number={3},
	pages={491--505},
}

\bib{Ko96}{book}{
	author={Koll\'ar, J\'anos},
	title={Rational curves on algebraic varieties},
	series={Ergebnisse der Mathematik und ihrer Grenzgebiete. 3. Folge. A
		Series of Modern Surveys in Mathematics},
	volume={32},
	publisher={Springer-Verlag, Berlin},
	date={1996},
	pages={viii+320},
}

\bib{KSS09}{article}{
   author={Knutsen, Andreas Leopold},
   author={Syzdek, Wioletta},
   author={Szemberg, Thomasz},
   title={Moving curves and Seshadri constants},
   journal={Math. Res. Lett.},
   volume={16},
   date={2009},
   number={4},
   pages={711--719},

}

\bib{KL15}{article}{
	author={K\"uronya, Alex},
	author={Lozovanu, Victor},
	title={Positivity of line bundles and  Newton--Okounkov bodies},
	journal={Documenta Math.},
	volume={22},
	date={2017},
	pages={1285--1302},
}

\bib{KL17}{article}{
	label={KL17},
	author={K\"uronya, Alex},
	author={Lozovanu, Victor},
	title={Infinitesimal Newwton--Okounkov bodies and jet separation},
	journal={Duke Math. J.},
	volume={166},
	date={2017},
	number={7},
	pages={1349--1376},
	
}

\bib{KL18}{article}{,
	author={K\"uronya, Alex},
	author={Lozovanu, Victor},
	title={Local positivity of linear series on surfaces.},
  journal={Algebra and Number Theory},
  volume={12},
  date={2018},
  number={1},
  pages={1--34},
}

\bib{KL19}{article}{
   label={KL19},
   author={K\"{u}ronya, Alex},
   author={Lozovanu, Victor},
   title={A Reider-type theorem for higher syzygies on abelian surfaces},
   journal={Algebr. Geom.},
   volume={6},
   date={2019},
   number={5},
   pages={548--570},

}

\bib{KL_Survey}{article}{
   author={K\"{u}ronya, Alex},
   author={Lozovanu, Victor},
   title={Geometric aspects of Newton-Okounkov bodies},
   conference={
      title={Phenomenological approach to algebraic geometry},
   },
   book={
      series={Banach Center Publ.},
      volume={116},
      publisher={Polish Acad. Sci. Inst. Math., Warsaw},
   },
   date={2018},
}

\bib{KLM12}{article}{
	author={K{\"u}ronya, Alex},
	author={Lozovanu, Victor},
	author={Maclean, Catriona},
	title={Convex bodies appearing as Okounkov bodies of divisors},
	journal={Adv. Math.},
	volume={229},
	date={2012},
	number={5},
	pages={2622--2639},
}

\bib{PAG}{book}{
  author={Lazarsfeld, Robert},
  title={Positivity in algebraic geometry. I-II},
  series={Ergebnisse der Mathematik und ihrer Grenzgebiete. 3. Folge.},
  volume={48},
  publisher={Springer-Verlag},
  place={Berlin},
  date={2004},
  pages={xviii+387},
}

\bib{LPP11}{article}{
   author={Lazarsfeld, Robert},
   author={Pareschi, Giuseppe},
   author={Popa, Mihnea},
   title={Local positivity, multiplier ideals, and syzygies of abelian
   varieties},
   journal={Algebra Number Theory},
   volume={5},
   date={2011},
   number={2},
   pages={185--196},
}

\bib{LM09}{article}{
   author={Lazarsfeld, Robert},
   author={Musta{\c{t}}{\u{a}}, Mircea},
   title={Convex bodies associated to linear series},
   journal={Ann. Sci. \'Ec. Norm. Sup\'er. (4)},
   volume={42},
   date={2009},
   number={5},
   pages={783--835},
}

\bib{Le14}{article}{
   author={Lesieutre, John},
   title={The diminished base locus is not always closed},
   journal={Compositio Math.},
   volume={150},
   date={2014},
   number={10},
   pages={1729--1741},
}

\bib{NR95}{article}{
   author={Nagaraj, D. S.},
   author={Ramanan, S.},
   title={Polarisations of type $(1,2,\cdots,2)$ on abelian varieties},
   journal={Duke Math. J.},
   volume={80},
   date={1995},
   number={1},
   pages={157--194},

}

\bib{N96}{article}{
   author={Nakamaye, Michael},
   title={Seshadri constants on abelian varieties},
   journal={Amer. J. Math.},
   volume={118},
   date={1996},
   number={3},
   pages={621--635},
}

\bib{N05}{article}{
   author={Nakamaye, Michael},
   title={Seshadri constants at very general points},
   journal={Trans. Amer. Math. Soc.},
   volume={357},
   date={2005},
   number={8},
   pages={3285--3297},
}

\bib{P00}{article}{
	author={Pareschi, Guiseppe},
	title={Syzygies of abelian varieties},
	journal={J. Amer. Math. Soc.},
	volume={13},
	number={3},
	date={2000},
	pages={651--664},   
}

\bib{PP03}{article}{
	author={Pareschi, Giuseppe},
	author={Popa, Mihnea},
	title={Regularity on abelian varieties. I},
	journal={J. Amer. Math. Soc.},
	volume={16},
	date={2003},
	number={2},
	pages={285--302},

}

\bib{Ro16}{article}{
	author={Ro\'e, Joaquim},
	title={Local positivity in terms of Newton-Okounkov bodies},
	journal={Adv. Math.},
	volume={301},
	date={2016},
	pages={486--498},
}

\bib{V02}{article}{
   author={Voisin, Claire},
   title={Green's generic syzygy conjecture for curves of even genus lying on a $K3$ surface},
   journal={J. Eur. Math. Soc.},
   volume={4},
   date={2002},
   number={4}
   pages={363--404},
}
\end{biblist}
\end{bibdiv}
\end{document}